\documentclass{amsart}

\usepackage{amsthm,amsfonts,amsmath,amssymb,mathrsfs,verbatim,cite,youngtab}

\numberwithin{equation}{section}
\newtheorem{theorem}{Theorem}[section]
\newtheorem{corollary}[theorem]{Corollary}
\newtheorem{proposition}[theorem]{Proposition}

\newtheorem{lemma}[theorem]{Lemma}

\renewcommand{\and}{\quad\text{and}\quad}

\newcommand{\f}{\mathsf{f}}
\newcommand{\Rat}{\mathbb Q}
\newcommand{\Nat}{\mathbb N}
\newcommand{\Z}{\mathbb Z}
\newcommand{\F}{\mathbb F}

\newcommand{\Symm}{\mathfrak{S}}
\newcommand{\abs}[1]{\lvert#1\rvert}
\newcommand{\la}{\lambda}
\newcommand{\qbin}[2]{\genfrac{[}{]}{0pt}{}{#1}{#2}}
\newcommand{\qbinE}[2]{\genfrac{\langle}{\rangle}{0pt}{}{#1}{#2}}
\renewcommand{\P}{\mathsf{P}}
\newcommand{\D}{\mathcal{D}}
\newcommand{\E}{\mathcal{E}}
\newcommand{\Q}{\mathsf{Q}}
\newcommand{\R}{\mathsf{R}}

\newcommand{\gn}{\mathfrak{gl}_n}
\newcommand{\sn}{\mathfrak{sl}_n}
\newcommand{\spec}[1]{\langle #1\rangle}

\begin{document}

\title[Branching rules]{Branching rules for symmetric Macdonald polynomials 
and $\sn$ basic hypergeometric series}

\author{Alain Lascoux and S. Ole Warnaar}
\thanks{Work supported by the ANR project MARS (BLAN06-2 134516)
and the Australian Research Council}

\address{CNRS, Institut Gaspard Monge, Universit\'{e} Paris-Est,
Marne-La-Vall\'ee, France}
\address{School of Mathematics and Physics, 
The University of Queensland, Brisbane, QLD 4072, Australia}

\subjclass[2000]{05E05, 33D52, 33D67}

\begin{abstract}
A one-parameter generalisation $R_{\la}(X;b)$
of the symmetric Macdonald polynomials and interpolations Macdonald 
polynomials is studied from the point of view of branching rules.
We establish a Pieri formula, evaluation symmetry, principal
specialisation formula and $q$-difference equation for $R_{\la}(X;b)$.
We also prove a new multiple $q$-Gauss summation formula and 
several further results for $\sn$ basic hypergeometric series based
on $R_{\la}(X;b)$.
\end{abstract}

\maketitle

\section{Introduction}

Let $\la$ be a partition, i.e., 
$\la=(\la_1,\la_2,\dots)$ is a weakly decreasing sequence of
nonnegative integers such that $\abs{\la}:=\la_1+\la_2+\cdots$ is finite.
Let the length $l(\la)$ of $\la$ be the number of nonzero $\la_i$. 
For $x=(x_1,\dots,x_n)$ and $l(\la)\leq n$ the Schur function 
$s_{\la}(x)$ is defined as 
\begin{equation}\label{alt}
s_{\la}(x):=\frac{\det_{1\leq i,j\leq n} (x_i^{\la_j+n-j})}
{\det_{1\leq i,j\leq n} (x_i^{n-j})}
=\frac{\det_{1\leq i,j\leq n} (x_i^{\la_j+n-j})}{\Delta(x)},
\end{equation}
where $\Delta(x):=\prod_{i<j}(x_i-x_j)$ is the Vandermonde product.
If $l(\la)>n$ then $s_{\la}(x):=0$.
From its definition it is clear that $s_{\la}(x)$ is a 
symmetric polynomial in $x$ of homogeneous degree $\abs{\la}$, and that
$\{s_{\la}|~l(\la)\leq n\}$ forms a basis of the ring of 
symmetric functions $\Lambda_n:=\Z[x_1,\dots,x_n]^{\Symm_n}$.

A classical result for Schur functions is the combinatorial formula
\begin{equation}\label{Schur}
s_{\la}(x)=\sum_T x^T.
\end{equation}
Here the sum is over all semi-standard Young tableau $T$ of shape $\la$,
and $x^T$ is shorthand for the monomial $x_1^{\mu_1}x_2^{\mu_2}\cdots
x_n^{\mu_n}$ with $\mu_i$ the number of squares of the tableau 
filled with the number $i$. 
One of the remarkable facts of \eqref{Schur} is that it actually yields 
a symmetric function.

The conventional way to view a semi-standard Young tableau of
shape $\la$ (and length at most $n$) as a filling
of a Young diagram with the numbers $1,2,\dots,n$ such that squares 
are strictly increasing along columns and weakly increasing along
rows. Given two partitions (or Young diagrams) $\la,\mu$ write
$\mu\preccurlyeq\la$ if $\mu\subseteq\la$ and $\la-\mu$ is
a horizontal strip, i.e., if the skew diagram $\la-\mu$ contains
at most one square in each column.
Then an alternative viewpoint is to consider a Young tableau of shape
$\la$ as a sequence of partitions
\begin{equation}\label{Tpart}
0=\la^{(0)}\preccurlyeq
\la^{(1)}\preccurlyeq\dots\preccurlyeq\la^{(n)}=\la,
\end{equation}
where $0$ denotes the empty partition.
For example, for $n=6$ the tableau

\[
\young(11122246,224555,456,56,6)
\]
may be encoded as
\[
0\preccurlyeq(3)\preccurlyeq(6,2)\preccurlyeq(6,2)\preccurlyeq 
(7,3,1)\preccurlyeq(7,6,2,1)\preccurlyeq (8,6,3,2,1).
\]
The above description implies that a recursive
formulation of the Schur functions, equivalent to the combinatorial
formula \eqref{Schur},
is given by the \textit{branching rule}
\begin{equation}\label{bSchur}
s_{\la}(x_1,\dots,x_n)=
\sum_{\mu\preccurlyeq\la} 
x_n^{\abs{\la-\mu}} s_{\mu}(x_1,\dots,x_{n-1}),
\end{equation}
subject to the initial condition 
$s_{\la}(\text{--})=\delta_{\la,0}$.

If we let $\mu\subseteq\la$ be a pair of partitions and define
the skew Schur function $s_{\la/\mu}$ of a single variable $z$ as
\[
s_{\la/\mu}(z):=\begin{cases}
z^{\abs{\la-\mu}} &\text{if $\mu\preccurlyeq\la$} \\
0 &\text{otherwise,}
\end{cases}
\]
then the branching rule for Schur functions takes the more familiar form
\begin{equation}\label{bSchur2}
s_{\la}(x_1,\dots,x_n)=
\sum_{\mu\subseteq\la} s_{\la/\mu}(x_n) s_{\mu}(x_1,\dots,x_{n-1}).
\end{equation}

\medskip

The Macdonald polynomials $P_{\la}(x)=P_{\la}(x;q,t)$ 
\cite{Macdonald88,Macdonald95}
are an important $q,t$-generalisation of the Schur functions,
and the $P_{\la}$ for $l(\la)\leq n$
form a basis of the ring $\Lambda_{n,\F}:=\Lambda_n\otimes \F$,
where $\F=\Rat(q,t)$.
A classical result in the theory is that the Macdonald polynomials
satisfy a combinatorial formula not unlike that of the Schur functions;
\begin{equation*}
P_{\la}(x)=\sum_T \psi_T \, x^T,
\end{equation*}
where $\psi_T=\psi_T(q,t)\in\F$ is a function that admits an explicit
combinatorial description. Importantly, if $T$ has no more than
$n$ rows it factorises as
\[
\psi_T=\prod_{i=1}^n \psi_{\la^{(i)}/\la^{(i-1)}},
\]
where, as before, $0=\la^{(0)}\preccurlyeq \dots \preccurlyeq
\la^{(n)}=\la$ is the sequence of partitions representing $T$. 
Probably the simplest (albeit non-combinatorial) expression for 
$\psi_{\la/\mu}$ is \cite[Page 342]{Macdonald95}
\begin{equation}\label{psi}
\psi_{\la/\mu}=\prod_{1\leq i\leq j\leq l(\mu)}
\frac{f(q^{\mu_i-\mu_j}t^{j-i}) f(q^{\la_i-\la_{j+1}}t^{j-i})}
{f(q^{\la_i-\mu_j}t^{j-i}) f(q^{\mu_i-\la_{j+1}}t^{j-i})},
\end{equation}
where $f(a)=(at)_{\infty}/(aq)_{\infty}$ with $(a)_{\infty}=
\prod_{i\geq 0} (1-aq^i)$. (Note that $\psi_{\la/\mu}\in\F$ since
$\mu\preccurlyeq\la$.)
It follows from the above that the Macdonald
polynomials, like the Schur functions, can be described by a 
simple branching rule. Namely,
\begin{equation}\label{bP}
P_{\la}(x_1,\dots,x_n)=
\sum_{\mu\preccurlyeq\la} 
x_n^{\abs{\la-\mu}} \psi_{\la/\mu}\, P_{\mu}(x_1,\dots,x_{n-1}),
\end{equation}
subject to the initial condition $P_{\la}(\text{--})=\delta_{\la,0}$.
Again we may define a single-variable skew
polynomial $P_{\la/\mu}(z)=P_{\la/\mu}(z;q,t)$ for
$\mu\subseteq\la$
\begin{equation}\label{Plamua}
P_{\la/\mu}(z):=\begin{cases}
z^{\abs{\la-\mu}}\psi_{\la/\mu} &\text{if $\mu\preccurlyeq\la$} \\
0 &\text{otherwise}
\end{cases}
\end{equation}
to turn the branching formula for the Macdonald polynomials into
\begin{equation}\label{bP2}
P_{\la}(x_1,\dots,x_n)=
\sum_{\mu\subseteq\la} P_{\la/\mu}(x_n) P_{\mu}(x_1,\dots,x_{n-1}).
\end{equation}

In view of the above two examples of symmetric functions admitting a 
recursive description in the form of a branching formula,
a natural question is
\begin{quote}
\textit{Can one find more general branching-type formulas that 
lead to symmetric functions?}
\end{quote}
To fully appreciate the question we should point out that it is not at
all obvious that if one were to take \eqref{bSchur} (or, equivalently,
\eqref{bSchur2}) as the definition of the Schur functions or 
\eqref{bP} (or \eqref{bP2}) as the definition of the Macdonald
polynomials, that the polynomials in question are symmetric in $x$.

Assuming throughout that $\abs{q}<1$ let the (generalised) $q$-shifted 
factorials be defined as follows:
\begin{subequations}
\begin{align}
(b)_{\infty}=(b;q)_{\infty}&:=\prod_{i=0}^{\infty}(1-bq^i), \\
(b)_k=(b;q)_k&:=\frac{(b)_{\infty}}{(bq^k)_{\infty}}, \\
(b)_{\la}=(b;q,t)_{\la}&:=\prod_{i=1}^{l(\la)} (bt^{1-i})_{\la_i},
\label{bla}
\end{align}
\end{subequations}
and let $(b_1,\dots,b_i)_k=(b_1)_k\cdots (b_i)_k$ and
$(b_1,\dots,b_i)_{\la}=(b_1)_{\la}\cdots (b_i)_{\la}$.
Then probably the best-known example of a branching rule 
generalising \eqref{bP2} and resulting in symmetric polynomials is
\begin{equation}\label{Ok}
M_{\la}(x_1,\dots,x_n)=
\sum_{\mu\subseteq\la} \frac{(t^{n-1}/x_n)_{\la}}{(t^{n-1}/x_n)_{\mu}}\,
P_{\la/\mu}(x_n) M_{\mu}(x_1,\dots,x_{n-1}).
\end{equation}
The $M_{\la}(x)=M_{\la}(x;q,t)$ are the interpolation Macdonald polynomials
of Knop, Okounkov and Sahi \cite{Knop97,Okounkov97,Okounkov98,Sahi96}, 
and \eqref{Ok} is \cite[Proposition 5.3]{Okounkov98}.
For comparison with \cite{Okounkov98}, we have 
\[
M_{\la}(x_1,\dots,x_n)=t^{(n-1)\abs{\la}}
P_{\la}^{\ast}(t^{1-n}x_1,\dots,t^{-1}x_{n-1},x_n).
\]
{}From \eqref{Ok} it is clear that the top-homogeneous component 
of $M_{\la}(x)$ is the Macdonald polynomial $P_{\la}(x)$ so that
$\{M_{\la}|~l(\la)\leq n\}$ forms an inhomogeneous basis of
$\Lambda_{n,\F}$.

For $\la=(\la_1,\dots,\la_n)$ and $\mu=(\mu_1,\dots,\mu_{n-1})$
such that $\la-\mu$ is a horizontal strip and such that $\la_1\leq m$
denote by $m^n-\la$ and $m^{n-1}-\mu$ the partitions 
$(m-\la_n,\dots,m-\la_1)$ and $(m-\mu_{n-1},\dots,m-\mu_1)$.
Note that $(m^n-\la)-(m^{n-1}-\mu)$ is again a horizontal strip.
It follows from \eqref{psi} that
\[
P_{(m^n-\la)/(m^{n-1}-\mu)}(1/z;1/q,1/t) 
=z^{-m}P_{\la/\mu}(z;1/q,1/t) 
=z^{-m}P_{\la/\mu}(z)
\]
for $\la_1\leq m$.
It also follows from \eqref{psi} and \eqref{Ok} that
\[
\frac{M_{m^n}(1/x_1,\dots,1/x_n;1/q,1/t)}
{M_{m^{n-1}}(1/x_1,\dots,1/x_{n-1};1/q,1/t)}=
x_n^{-m} (x_n;1/q)_m.
\]
If we replace $(x,q,t)\mapsto (1/x,1/q,1/t)$ in \eqref{Ok} and
then change $\la\mapsto (m-\la_n,\dots,m-\la_1)$ and $\mu\mapsto
(m-\mu_{n-1},\dots,m-\mu_1)$ the branching rule for the interpolation
Macdonald polynomials may thus be recast as
\begin{multline}\label{MM}
\frac{M_{m^n-\la}(1/x_1,\dots,1/x_n;1/q,1/t)}
{M_{m^n}(1/x_1,\dots,1/x_n;1/q,1/t)} \\
=\sum_{\mu\subseteq\la} 
\frac{(q^{1-m}x_n/t)_{\mu}}{(q^{1-m}x_n)_{\la}}\, 
P_{\la/\mu}(x_n) \, 
\frac{M_{m^{n-1}-\mu}(1/x_1,\dots,1/x_{n-1};1/q,1/t)}
{M_{m^{n-1}}(1/x_1,\dots,1/x_{n-1};1/q,1/t)}
\end{multline}
In this paper we consider a generalisation 
$R_{\la}(x;b)=R_{\la}(x;b;q,t)$
of the Macdonald polynomials and the Macdonald interpolation
polynomials defined recursively by the branching rule
\begin{equation}\label{Rbf}
R_{\la}(x_1,\dots,x_n;b):=\sum_{\mu\subseteq\la} 
\frac{(bx_n/t)_{\mu}}{(bx_n)_{\la}}\, 
P_{\la/\mu}(x_n) R_{\mu}(x_1,\dots,x_{n-1};b).
\end{equation}

Our interest in the functions $R_{\la}(x;b)$ is not merely that they
provide another example of a class of symmetric functions defined by a simple
branching formula. Indeed, it may be shown that the more general
\begin{equation}\label{Rab}
R_{\la}(x_1,\dots,x_n;a,b):=\sum_{\mu\subseteq\la} 
\frac{(x_n/a)_{\la}(bx_n/t)_{\mu}}
{(x_n/a)_{\mu}(bx_n)_{\la}}\, P_{\la/\mu}(a) 
R_{\mu}(x_1,\dots,x_{n-1};at,b)
\end{equation}
is also symmetric. Moreover $R_{\la}(x;a,b)$ 
is a limiting case (reducing BC$_n$ symmetry to $\Symm_n$ symmetry
and breaking ellipticity) of Rains' BC$_n$ symmetric abelian interpolation
functions \cite{Rains03,Rains06} so that $R_{\la}(x;b)$ is not actually new. 
However, it turns out that the function $R_{\la}(x;b)$ has a number of
interesting properties, not shared by $R_{\la}(x;a,b)$
or the more general BC$_n$ abelian functions.
For example, $R_{\la}(x;b)$ satisfies a Pieri formula
that not just implies the standard Pieri formulas for
Macdonald polynomials, but also gives an $\sn$ generalisation
of the famous $q$-Gauss summation formula. Specifically, with $\R_{\la}(x;b)$ 
a suitable normalisation of the functions $R_{\la}(x;b)$ defined in
equation \eqref{Rnorm}, and $X$ an arbitrary finite alphabet, we prove that
\begin{equation}\label{qGaussI}
\sum_{\la} 
\Bigl(\frac{c}{ab}\Bigr)^{\abs{\la}} (a,b)_{\la} \R_{\la}(X;c)=
\prod_{x\in X} \frac{(cx/a,cx/b)_{\infty}}{(cx,cx/ab)_{\infty}}.
\end{equation}
For $X=\{1\}$ this simplifies to the well-known $q$-Gauss sum
\begin{equation}\label{gauss}
\sum_{k=0}^{\infty} 
\frac{(a,b)_k}{(q,c)_k}\, 
\Bigl(\frac{c}{ab}\Bigr)^k =
\frac{(c/a,c/b)_{\infty}}{(c,c/ab)_{\infty}}.
\end{equation}

\section{Preliminaries on Macdonald polynomials}

We begin with a remark about notation.
If $f$ is a symmetric function we will often write $f(X)$ with
$X=\{x_1,\dots,x_n\}$ (and refer to $X$ as an alphabet) instead of 
$f(x)$ with $x=(x_1,\dots,x_n)$, the latter notation being 
reserved for function that are not (a priori) symmetric. Following
this notation we also use $f(X+Y)$ where $X+Y$ denotes the (disjoint)
union of the alphabets $X$ and $Y$, and $f(X+z)$ where $X+z$ denotes
the alphabet $X$ with the single letter $z$ added.

\medskip 
In the following  we review some of the basics of Macdonald polynomial
theory, most of which can be found in \cite{Macdonald88,Macdonald95}.

Let $T_{q,x_i}$ be the $q$-shift operator acting on the variable $x_i$:
\[
(T_{q,x_i}f)(x_1,\dots,x_n)=f(x_1,\dots,x_{i-1},qx_i,x_{i+1},\dots,x_n).
\]
Then the Macdonald polynomials $P_{\la}(X)=P_{\la}(X;q,t)$
for $X=\{x_1,\dots,x_n\}$ are the unique polynomial eigenfunctions 
of the Macdonald operator
\begin{equation}\label{Dnc}
D_n(c):=\sum_{I\subseteq [n]}(-c)^{\abs{I}}t^{\binom{\abs{I}}{2}}
\prod_{\substack{i\in I \\ j\not\in I}}
\frac{tx_i-x_j}{x_i-x_j}
\prod_{i\in I} T_{q,x_i},
\end{equation}
where $[n]:=\{1,\dots,n\}$. Explicitly,
\begin{equation}\label{MacOp}
D_n(c) P_{\la}(X)=P_{\la}(X)\prod_{i=1}^n (1-cq^{\la_i}t^{n-i}).
\end{equation}
For later reference we state the coefficient of $c^1$ of this equation
separately; if
\begin{equation}\label{Dn1}
D_n^1:=\sum_{i=1}^n \biggl(\:
\prod_{j\neq i} \frac{tx_i-x_j}{x_i-x_j} \biggr) T_{q,x_i},
\end{equation}
then
\begin{equation}\label{MacOp2}
D_n^1 P_{\la}(X)=P_{\la}(X)\sum_{i=1}^n q^{\la_i}t^{n-i}.
\end{equation}

For each square $s=(i,j)\in\Z^2$ in the (Young) diagram of a partition 
(i.e., for each $i\in\{1,\dots,l(\la)\}$ and $j\in\{1,\dots,\la_i\}$,
the arm-length $a(s)$, arm-colength $a'(s)$, leg-length $l(s)$
and leg-colength $l'(s)$ are given by
\[
a(s)=\la_i-j,\qquad a'(s)=j-1
\]
and
\[
l(s)=\la'_j-i,\qquad l'(s)=i-1,
\]
where $\la'$ is the conjugate of $\la$, obtained by reflecting
the diagram of $\la$ in the main diagonal.
Note that the generalised $q$-shifted factorial \eqref{bla}
can be expressed in terms of the colengths as
\[
(b)_{\la}=\prod_{s\in\la}(1-bq^{a'(s)}t^{-l'(s)}).
\]
With the above notation we define the further $q$-shifted factorials
$c'_{\la}=c'_{\la}(q,t)$, $c_{\la}=c_{\la}(q,t)$ and
$b_{\la}=b_{\la}(q,t)$ as
\[
c'_{\la}:=\prod_{s\in\la} (1-q^{a(s)+1}t^{l(s)})
\and c_{\la}:=\prod_{s\in\la} (1-q^{a(s)}t^{l(s)+1})
\]
and
\[
b_{\la}:=\frac{c_{\la}}{c'_{\la}}.
\]
Then the Macdonald polynomials $Q_{\la}(X)=Q_{\la}(X;q,t)$ are defined as
\[
Q_{\la}(X):=b_{\la} P_{\la}(X).
\]
We also need the skew Macdonald polynomials $P_{\la/\mu}$ and
$Q_{\la/\mu}$ defined for $\mu\subseteq\la$ by
\begin{subequations}\label{PXYa}
\begin{align}
P_{\la}(X+Y)&=\sum_{\mu\subseteq\la} P_{\la/\mu}(Y) P_{\mu}(X) \\
Q_{\la}(X+Y)&=\sum_{\mu\subseteq\la} Q_{\la/\mu}(Y) Q_{\mu}(X).
\end{align}
\end{subequations}
Note that $P_{\la/0}=P_{\la}$ and $Q_{\la/0}=Q_{\la}$,
and that $P_{\la/\la}=Q_{\la/\la}=1$.
To simplify some later equations it will be useful to
extend the definitions of $P_{\la/\mu}$ and $Q_{\la/\mu}$
to all partition pairs $\la,\mu$ by setting $P_{\la/\mu}=Q_{\la/\mu}=0$
if $\mu\not\subseteq\la$.
{}From \eqref{Dnc}, \eqref{MacOp} and \eqref{PXYa} it follows that
for $a$ a scalar,
\begin{subequations}\label{hom}
\begin{align}
P_{\la/\mu}(aX)&=a^{\abs{\la-\mu}} P_{\la/\mu}(X) \\
Q_{\la/\mu}(aX)&=a^{\abs{\la-\mu}} Q_{\la/\mu}(X),
\end{align}
\end{subequations}
where $aX:=\{ax|~x\in X\}$.

For subsequent purposes it will be convenient to also introduce
normalised (skew) Macdonald polynomials $\P_{\la/\mu}$ and 
$\Q_{\la/\mu}$ as
\begin{subequations}\label{norm}
\begin{align}
\P_{\la/\mu}(X)&=t^{n(\la)-n(\mu)} 
\frac{c'_{\mu}}{c'_{\la}}\, P_{\la/\mu}(X)=
t^{n(\la)-n(\mu)} 
\frac{c_{\mu}}{c_{\la}}\, Q_{\la/\mu}(X) \\[2mm]
\Q_{\la/\mu}(X)&=t^{n(\mu)-n(\la)} 
\frac{c'_{\la}}{c'_{\mu}}\, Q_{\la/\mu}(X)=
t^{n(\mu)-n(\la)} \frac{c_{\la}}{c_{\mu}}\, P_{\la/\mu}(X),
\label{skewQP}
\end{align}
\end{subequations}
where 
\[
n(\la):=\sum_{s\in\la} l'(s)=\sum_{i=1}^{l(\la)}(i-1)\la_i.
\]
Note that no additional factors arise in the normalised form of
\eqref{PXYa}:
\begin{align*}
\P_{\la}(X+Y)&=\sum_{\mu} \P_{\la/\mu}(Y) \P_{\mu}(X) \\
\Q_{\la}(X+Y)&=\sum_{\mu} \Q_{\la/\mu}(Y) \Q_{\mu}(X),
\end{align*}
and that $\P_{\la/0}=\P_{\la}$ and $\Q_{\la/0}=\Q_{\la}$.
If we define the structure constants $\f^{\la}_{\mu\nu}=
\f^{\la}_{\mu\nu}(q,t)$ by 
\begin{equation}\label{struc}
\P_{\mu}(X)\P_{\nu}(X)=\sum_{\la} \f^{\la}_{\mu\nu}\P_{\la}(X),
\end{equation}
then
\[
\f^{\la}_{\mu\nu}=
t^{n(\mu)+n(\nu)-n(\la)} \frac{c'_{\la}}{c'_{\mu}c'_{\nu}}\, 
f^{\la}_{\mu\nu},
\]
with $f^{\la}_{\mu\nu}$ the $q,t$-Littlewood--Richardson coefficients.

Below we make use of some limited $\la$-ring notation (see
\cite{Lascoux03} for more details). Let $p_r$ be the $r$th 
power-sum symmetric function
\[
p_r(X):=\sum_{x\in X} x^r
\]
and let $p_{\la}:=\prod_{i\geq i} p_{\la_i}$. Then the
$p_{\la}(X)$ form a basis of $\Lambda_n$ where $n=\abs{X}$.
Given a symmetric function $f(X)$ we define
\[
f\biggl[\frac{a-b}{1-t}\biggr]:=\phi_{a,b}(f),
\]
where $\phi_{a,b}$ is the evaluation homomorphism given by
\begin{equation}\label{phi}
\phi_{a,b}(p_r)=\frac{a^r-b^r}{1-t^r}.
\end{equation}

In particular, $f[(1-t^n)/(1-t)]=f(1,t,\dots,t^{n-1})$ is known
as the principal specialisation, which we will also denote as
$f(\spec{0})$, and $f[1/(1-t)]=f(1,t,t^2,\dots)$.
{}From \cite[Equation (6.24)]{LRW08}
\[
\Q_{\la/\mu}\biggl[\frac{a-b}{1-t}\biggr]=a^{\abs{\la-\mu}}
\sum_{\nu}(b/a)_{\nu}\f_{\mu\nu}^{\la},
\]
which, by $\f_{0\nu}^{\la}=\delta_{\la \nu}$, also implies that
\begin{equation}\label{Qat}
\Q_{\la}\biggl[\frac{1-a}{1-t}\biggr]=(a)_{\la}\and
\P_{\la}\biggl[\frac{1-a}{1-t}\biggr]
=t^{2n(\la)} \frac{(a)_{\la}}{c_{\la}c'_{\la}}.
\end{equation}
This last equation of yields the well-known principal specialisation formula 
\[
\P_{\la}(\spec{0})=t^{2n(\la)} \frac{(t^n)_{\la}}{c_{\la}c'_{\la}}.
\]

The Cauchy identity for (skew) Macdonald polynomials is given by
\[
\sum_{\la}\P_{\la/\mu}(X)\Q_{\la/\nu}(Y)=
\Biggl(\: \prod_{x\in X} \prod_{y\in Y}
\frac{(txy)_{\infty}}{(xy)_{\infty}} \Biggr)
\sum_{\la}\P_{\nu/\la}(X)\Q_{\mu/\la}(Y).
\]
The product on the right-hand side may alternatively be expressed in
terms of the power-sum symmetric functions as
\[
\exp\biggl(\:\sum_{r=1}^{\infty} \frac{1}{r}\,\frac{1-t^r}{1-q^r}\, 
p_r(X)p_r(Y)\biggr).
\]
It thus follows from \eqref{phi} and \eqref{Qat}, as well as some
elementary manipulations, that application of $\phi_{a,c}$ (acting on $Y$) 
turns the Cauchy identity into
\begin{equation}\label{skewC}
\sum_{\la}\Q_{\la/\nu}\biggl[\frac{a-c}{1-t}\biggr] \P_{\la/\mu}(X)=
\biggl(\:\prod_{x\in X} \frac{(cx)_{\infty}}{(ax)_{\infty}} \biggr)
\sum_{\la}\Q_{\mu/\la}\biggl[\frac{a-c}{1-t}\biggr] \P_{\nu/\la}(X).
\end{equation}
For $\mu=\nu=0$ (followed by the substitution $X\to X/a$ and then 
$a\to c/a$) this is the $q$-binomial identity for Macdonald polynomials 
\cite{Kaneko96,Macdonald} 
\begin{equation}\label{qbt}
\sum_{\la} (a)_{\la} \P_{\la}(X)
=\prod_{x\in X} \frac{(ax)_{\infty}}{(x)_{\infty}}.
\end{equation}
For later reference we also state the more general $(\mu,\nu)=(0,\mu)$ 
instance of \eqref{skewC}
\begin{equation}\label{eqPieriII}
\P_{\mu}(X) \prod_{x\in X} \frac{(bx)_{\infty}}{(ax)_{\infty}}
=\sum_{\la} \Q_{\la/\mu}\biggl[\frac{a-b}{1-t}\biggr]\P_{\la}(X).
\end{equation}
For reasons outlined below we will refer to this as a Pieri formula.

Let $\phi_{\la/\mu}=\phi_{\la,\mu}(q,t)$ and 
$\psi'_{\la/\mu}=\psi'_{\la,\mu}(q,t)$ be defined by
\[
\phi_{\la/\mu}:=\frac{b_{\la}}{b_{\mu}}\,\psi_{\la/\mu}
\quad\text{and}\quad
\psi'_{\la,\mu}(q,t):=\psi_{\la'/\mu'}(t,q).
\]
(For combinatorial expressions for all of
$\psi_{\la/\mu},\psi'_{\la/\mu}$ and $\phi_{\la/\mu}$, see \cite{Macdonald95}).
Further let $g_{(r)}(X):=P_{(r)}(X)\,(t)_r/(q)_r$ and $e_r(X)$ the $r$th
elementary symmetric function.
Then the Macdonald polynomials $P_{\la}(X)$ satisfy the 
Pieri formulas
\begin{subequations}\label{Pierir}
\begin{align}
P_{\mu}(X)g_r(X)
&=\sum_{\substack{\la\succcurlyeq\mu \\ \abs{\la-\mu}=r}}
\phi_{\la/\mu}P_{\la}(X) \\
P_{\mu}(X)e_r(X)
&=\sum_{\substack{\la'\succcurlyeq\mu' \\ \abs{\la-\mu}=r}}
\psi'_{\la/\mu} P_{\la}(X).
\end{align}
\end{subequations}
Now observe that \eqref{eqPieriII} for $b=at$ yields
\[
\P_{\mu}(X) \prod_{x\in X} \frac{(atx)_{\infty}}{(ax)_{\infty}}
=\sum_{\la} a^{\abs{\la-\mu}} 
\Q_{\la/\mu}(1)\P_{\la}(X),
\]
whereas for $a=bq$ it yields
\[
\P_{\mu}(X) \prod_{x\in X} (1-bx)
=\sum_{\la} b^{\abs{\la-\mu}}
\Q_{\la/\mu}\biggl[\frac{q-1}{1-t}\biggr]\P_{\la}(X).
\]
Since 
\[
\sum_{r\geq 0} a^r g_r(X)=
\prod_{x\in X}\frac{(atx)_{\infty}}{(ax)_{\infty}} 
\quad\text{and}\quad
\sum_{r\geq 0} (-b)^r e_r(X)=\prod_{x\in X} (1-bx)
\]
we therefore have
\begin{align*}
P_{\mu}(X) g_r(X)&=
\sum_{\abs{\la-\mu}=r} 
Q_{\la/\mu}(1)P_{\la}(X) \\
\intertext{and}
P_{\mu}(X) e_r(X)&=
(-1)^r \sum_{\abs{\la-\mu}=r} 
Q_{\la/\mu}\biggl[\frac{q-1}{1-t}\biggr]P_{\la}(X),
\end{align*}
where we have also used \eqref{skewQP}.
Identifying 
\[
Q_{\la/\mu}(1)=\begin{cases}
\phi_{\la/\mu} &\text{if $\mu\preccurlyeq\la$} \\[1mm]
0 &\text{otherwise}
\end{cases}
\]
and
\begin{equation}\label{Qpsip}
Q_{\la/\mu}\biggl[\frac{q-1}{1-t}\biggr]=
\begin{cases}\displaystyle
(-1)^{\abs{\la-\mu}} \psi'_{\la/\mu} &\text{if $\mu'\preccurlyeq\la'$} \\[1mm]
0 &\text{otherwise}
\end{cases}
\end{equation}
these two formulas are equivalent to the Pieri rules of \eqref{Pierir}.

\medskip

The skew polynomials can be used to define generalised $q$-binomial 
coefficients \cite{Okounkov97,Lassalle98,Lassalle99} as
\begin{equation}\label{qbinlamu}
\qbin{\la}{\mu}=
\qbin{\la}{\mu}_{q,t}:=
\Q_{\la/\mu}\biggl[\frac{1}{1-t}\biggr].
\end{equation}
In particular has $\qbin{\la}{\mu}=0$ if $\mu\not\subseteq\la$
and
\[
\qbin{(m)}{(k)}=\prod_{i=1}^k \frac{1-q^{i+m-k}}{1-q^i}=\qbin{m}{k}
\]
with on the right the classical $q$-binomial coefficients $\qbin{m}{k}=
\qbin{m}{k}_q$.

If $\la_{(i)}:=(\la_1,\dots,\la_{i-1},\la_i-1,\la_{i+1},\dots,\la_i)$
then \cite{Lassalle98}
\[
(1-q)t^{i-1} \qbin{\la}{\la_{(i)}}=\frac{c'_{\la}}{c'_{\la_{(i)}}}\,
\psi'_{\la/\la_{(i)}}.
\]
This, together with \cite[Th\'eor\`eme 9, Bis]{Lassalle98}
\begin{equation}\label{rec}
\bigl(\omega_{\la}-\omega_{\mu}\bigr)
\qbin{\la}{\mu}
=(1-q)\sum_{i=1}^n q^{-\la_i}t^{i-n}
\qbin{\la}{\la_{(i)}}
\qbin{\la_{(i)}}{\mu},
\end{equation}
where 
\begin{equation}\label{omega}
\omega_{\la}=\omega_{\la}(q,t):=\sum_{i=1}^n q^{-\la_i} t^{i-n},
\end{equation}
provides a simple recursive method to compute the
generalised $q$-binomial coefficients.

\begin{lemma}\label{lemLas}
Assume that $l(\la),l(\mu)\leq n$. Then
\begin{subequations}
\begin{align}\label{tisq}
\qbin{\la}{\mu}&=
\frac{s_{\mu}(\spec{0})}{s_{\la}(\spec{0})}
\det_{1\leq i,j\leq n}\biggl(
\qbin{\la_i+n-i}{\mu_j+n-j}\biggr) &&
\text{if $t=q$} \\
\intertext{and}
\label{tis1}
\qbin{\la}{\mu}&=\sum_{u^{+}=\mu}
\prod_{i=1}^n \qbin{\la_i}{u_i} &&
\text{if $t=1$.}
\end{align}
\end{subequations}
\end{lemma}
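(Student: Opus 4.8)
The plan is to read off both identities from two well-known degenerations of the Macdonald polynomials together with a single generating-function identity for the coefficients $\qbin{\la}{\mu}$. The generating function I would use is the $a=1$, $b=0$ case of the Pieri formula \eqref{eqPieriII},
\[
\P_{\mu}(X)\prod_{x\in X}\frac{1}{(x)_{\infty}}=\sum_{\la}\qbin{\la}{\mu}\,\P_{\la}(X).
\]
Combined with $\qbin{\la}{\mu}=\sum_{\nu}\f^{\la}_{\mu\nu}$ (the $a=1$, $b=0$ case of the formula $\Q_{\la/\mu}\bigl[(a-b)/(1-t)\bigr]=a^{\abs{\la-\mu}}\sum_{\nu}(b/a)_{\nu}\f^{\la}_{\mu\nu}$ recorded above, using $(0)_{\nu}=1$), this also shows that $\qbin{\la}{\mu}$ is regular at $t=q$ and at $t=1$, so that the specialisations below are legitimate.

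For \eqref{tisq} I would argue directly. At $t=q$ one has $P_{\la}(x;q,q)=s_{\la}(x)$, hence $Q_{\la/\mu}(X;q,q)=s_{\la/\mu}(X)$, while $c'_{\la}(q,q)=\prod_{s\in\la}(1-q^{h(s)})$ with $h(s)=a(s)+l(s)+1$. By the normalisation \eqref{skewQP} this gives $\Q_{\la/\mu}(X;q,q)=q^{n(\mu)-n(\la)}(c'_{\la}/c'_{\mu})\,s_{\la/\mu}(X)$, so that $\qbin{\la}{\mu}_{q,q}=q^{n(\mu)-n(\la)}(c'_{\la}/c'_{\mu})\,s_{\la/\mu}(1,q,q^2,\dots)$. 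Now I would invoke the (dual) Jacobi--Trudi identity $s_{\la/\mu}=\det_{1\le i,j\le n}(h_{\la_i-\mu_j-i+j})$ together with $h_r(1,q,q^2,\dots)=1/(q)_r$ (with $1/(q)_r:=0$ for $r<0$); extracting the row factor $(q)_{\la_i+n-i}$ and the column factor $1/(q)_{\mu_j+n-j}$ from the resulting determinant, and using $\qbin{m}{k}_q=(q)_m/\bigl((q)_k(q)_{m-k}\bigr)$, turns this into
\[
\qbin{\la}{\mu}_{q,q}=q^{n(\mu)-n(\la)}\,\frac{c'_{\la}}{c'_{\mu}}\,
\frac{\prod_{j=1}^{n}(q)_{\mu_j+n-j}}{\prod_{i=1}^{n}(q)_{\la_i+n-i}}\,
\det_{1\le i,j\le n}\qbin{\la_i+n-i}{\mu_j+n-j}_q .
\]
It remains to identify the scalar prefactor with $s_{\mu}(\spec{0})/s_{\la}(\spec{0})$. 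For this I would rewrite the principal specialisation formula quoted before the lemma, at $t=q$ (where $c_{\la}=c'_{\la}$), as $s_{\la}(\spec{0})=q^{n(\la)}(q^n)_{\la}/c'_{\la}(q,q)$, and combine it with the elementary evaluation $(q^n)_{\la}=\prod_{i=1}^{n}(q)_{\la_i+n-i}\big/\prod_{k=0}^{n-1}(q)_k$; this yields $q^{-n(\la)}c'_{\la}\big/\prod_i(q)_{\la_i+n-i}=1\big/\bigl(s_{\la}(\spec{0})\prod_{k=0}^{n-1}(q)_k\bigr)$ and the same with $\mu$ in place of $\la$, and multiplying the two the factor $\prod_k(q)_k$ cancels, leaving exactly $s_{\mu}(\spec{0})/s_{\la}(\spec{0})$.

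For \eqref{tis1} one cannot put $t=1$ directly into $\qbin{\la}{\mu}=\Q_{\la/\mu}[1/(1-t)]$, since the polynomial $\Q_{\la/\mu}$ and the plethystic alphabet $1/(1-t)$ both degenerate there; instead I would work with the generating function above. At $t=1$ the Macdonald operator \eqref{Dnc} reduces to $\prod_{i=1}^{n}(1-cT_{q,x_i})$, whose monic eigenfunctions triangular with respect to dominance are the monomial symmetric functions, so $P_{\la}(x;q,1)=m_{\la}(x)$; since $c'_{\la}(q,1)=\prod_i(q)_{\la_i}$, this gives $\P_{\la}(X;q,1)=m_{\la}(X)\big/\prod_i(q)_{\la_i}$. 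Substituting this into the generating function, expanding $\prod_{x\in X}1/(x)_{\infty}=\sum_{\gamma\in\Nat^n}\prod_i x_i^{\gamma_i}/(q)_{\gamma_i}$ and $m_{\mu}(X)=\sum_{\beta^{+}=\mu}x^{\beta}$ (sum over compositions $\beta\in\Nat^n$ rearranging $\mu$), and comparing the coefficient of the monomial $x_1^{\la_1}\cdots x_n^{\la_n}$ on both sides, one obtains $\qbin{\la}{\mu}_{q,1}\big/\prod_i(q)_{\la_i}$ on the right and, matching $\beta_i+\gamma_i=\la_i$, the quantity $\prod_i(q)_{\mu_i}^{-1}\sum_{\beta^{+}=\mu}\prod_i 1/(q)_{\la_i-\beta_i}$ on the left (the constraint $\beta_i\le\la_i$ being automatic from $1/(q)_{<0}=0$). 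Since $\prod_i(q)_{\mu_i}=\prod_i(q)_{\beta_i}$ whenever $\beta^{+}=\mu$, this rearranges to $\qbin{\la}{\mu}_{q,1}=\sum_{\beta^{+}=\mu}\prod_i\qbin{\la_i}{\beta_i}_q$, which is \eqref{tis1}.

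The only steps that are not pure bookkeeping are recognising that \eqref{tis1} must be reached through the generating function rather than by naive substitution (and checking $P_{\la}(x;q,1)=m_{\la}(x)$), and the $q$-shifted-factorial manipulations that collapse the prefactor in \eqref{tisq}; I expect the latter to be the most error-prone part. As a cross-check, or as an alternative proof, one could instead verify that the right-hand sides of \eqref{tisq} and \eqref{tis1} obey Lassalle's recursion \eqref{rec} together with the initial value $\qbin{\la}{0}=1$, although the specialisation argument above seems more transparent.
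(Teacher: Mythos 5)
Your proof of \eqref{tisq} is correct and is essentially the paper's own argument: reduce $\Q_{\la/\mu}$ at $t=q$ to a skew Schur function via \eqref{skewQP}, apply the Jacobi--Trudi identity (note it is the ordinary one in the $h_r$, not the dual one) with $h_r[1/(1-q)]=1/(q)_r$, and absorb the row/column factors into $s_{\mu}(\spec{0})/s_{\la}(\spec{0})$ by \eqref{pps}; your $q$-factorial bookkeeping for the prefactor checks out. For \eqref{tis1} you take a genuinely different route. The paper sidesteps the degenerate substitution $t=1$ by the duality \eqref{qtinv}, writing $\qbin{\la}{\mu}_{q^{-1},1}=\qbin{\la'}{\mu'}_{1,q}$, then using $P_{\la/\mu}(X;1,t)=e_{\la'/\mu'}(X)$, the expansion of $e_{\la/\mu}$ over rearrangements, and $e_r[1/(1-q)]=q^{\binom{r}{2}}/(q)_r$, finally sending $q\mapsto 1/q$. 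You instead specialise the generating function $\P_{\mu}(X)\prod_x 1/(x)_{\infty}=\sum_{\la}\qbin{\la}{\mu}\P_{\la}(X)$ (the $a=1$, $b=0$ case of \eqref{eqPieriII}, which is established before the lemma, so there is no circularity) at $t=1$, where $\P_{\la}$ becomes $m_{\la}/\prod_i(q)_{\la_i}$, and extract the coefficient of the dominant monomial $x^{\la}$; the computation is correct and recovers \eqref{tis1}. Your approach avoids the conjugation symmetry \eqref{qtinv} and any plethystic evaluation at the degenerate point, needing only the classical fact $P_{\la}(x;q,1)=m_{\la}$; its cost is the extra justification that the $t=1$ specialisation of the identity is legitimate. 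Your regularity remark via $\qbin{\la}{\mu}=\sum_{\nu}\f^{\la}_{\mu\nu}$ is a sensible reduction, but as stated it only shifts the burden: the regularity of $\f^{\la}_{\mu\nu}$ at $t=1$ itself rests on $P_{\la}(\,\cdot\,;q,1)=m_{\la}$ together with $c'_{\la}(q,1)=\prod_i(q)_{\la_i}\neq 0$, so you should say this explicitly (one sentence suffices); with that, the argument is complete. The paper's route, by contrast, keeps everything at the level of a single well-defined plethystic evaluation and needs no regularity discussion, but it invokes the extra symmetry \eqref{qtinv}.
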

In the above $\sum_{u^{+}=\mu}$ denotes a sum over compositions
$u\in\Nat^n$ in the $\Symm_n$ orbit of $\mu$. 

\begin{proof}
Assume that $t=q$. Then \eqref{qbinlamu} simplifies to
\begin{equation}\label{qbintq}
\qbin{\la}{\mu}=q^{n(\mu)-n(\la)}\,
\frac{c_{\la}}{c_{\mu}}\, s_{\la/\mu}\biggl[\frac{1}{1-q}\biggr],
\end{equation}
where we have also used \eqref{skewQP} and the fact that for $t=q$ the
(skew) Macdonald polynomials reduce to the (skew) Schur functions.
Let $h_r(X)$ be the $r$th complete symmetric function.
By application of the Jacobi--Trudi identity 
\cite[Equation (I.5.4)]{Macdonald95} 
\[
s_{\la/\mu}=\det_{1\leq i,j\leq n}(h_{\la_i-\mu_j-i+j})
\qquad \text{for $n\geq l(\la)$},
\]
and the principal specialisation formula
\cite[page 44]{Macdonald95}
\begin{equation}\label{pps}
s_{\la}(\spec{0})=\frac{q^{n(\la)}}{c'_{\la}}
\prod_{i=1}^n \frac{(q)_{\la_i+n-i}}{(q)_{n-i}}
\end{equation}
the generalised $q$-binomial coefficient \eqref{qbintq} can be expressed
as a determinant
\[
\qbin{\la}{\mu}=
\frac{s_{\mu}(\spec{0})}{s_{\la}(\spec{0})}\,
\det_{1\leq i,j\leq n}
\biggl(\frac{(q;q)_{\la_i+n-i}}{(q,q)_{\mu_j+n-j}}\,
h_{\la_i-\mu_j-i+j}\biggl[\frac{1}{1-q}\biggr]\biggr).
\]
Since
\[
h_r\biggl[\frac{1}{1-q}\biggr]=\frac{1}{(q)_r}
\]
this establishes the first claim.

The second claim follows in analogous manner.
Since making the substitution $t=1$ in the right-hand side
of \eqref{qbinlamu} is somewhat problematic it is best to first
use the symmetry  \cite[Equation (2.12)]{Okounkov97}
\begin{equation}\label{qtinv}
\qbin{\la}{\mu}_{q,t}=\qbin{\la'}{\mu'}_{t^{-1},q^{-1}}.
\end{equation}
Since
\[
P_{\la/\mu}(X;1,t)=e_{\la'/\mu'}(X)
\]
we therefore get
\[
\qbin{\la}{\mu}_{q^{-1},1}=
\qbin{\la'}{\mu'}_{1,q} 
=q^{n(\mu')-n(\la')}\,
\frac{c_{\la'}(1,q)}{c_{\mu'}(1,q)}\,
e_{\la/\mu}\biggl[\frac{1}{1-q}\biggr].
\]
Using 
\[
e_{\la/\mu}=\sum_{u^{+}=\mu} \prod_{i=1}^n e_{\la_i-u_i}
\qquad \text{for $n\geq l(\la)$},
\]
and 
\[
e_r\biggl[\frac{1}{1-q}\biggr]=\frac{q^{\binom{r}{2}}}{(q;q)_r}
\]
as well as $c_{\la'}(q,1)=\prod_i (q)_{\la_i}$, it follows that
\[
\qbin{\la}{\mu}_{q^{-1},1}=
\sum_{u^{+}=\mu} \prod_{i=1}^n q^{-\la_i u_i}\qbin{\la_i}{u_i}.
\]
Finally replacing $q\mapsto 1/q$ yields the second claim.
\end{proof}

\section{Symmetric functions and branching rules}

In this section we consider the question posed in the introduction:
\begin{quote}
\textit{Can one find new(?) branching-type formulas,
similar to \eqref{bSchur}, \eqref{bP} and \eqref{Ok}, 
that lead to symmetric functions?}
\end{quote}

Assume that $k$ is a fixed nonnegative integer, and let
$\mathbf{a}=(a_1,a_2,\dots,a_k)$
denote a finite sequence of parameters.
Then we are looking for branching coefficients
$f_{\la/\mu}(z;\mathbf{a})$ such that
\begin{subequations}
\begin{equation}\label{frule}
f_{\la}(x_1,\dots,x_n;\mathbf{a})=\sum_{\mu\subseteq\la} 
f_{\la/\mu}(x_n;\mathbf{a})
f_{\mu}(x_1,\dots,x_{n-1};\mathbf{a}'),
\end{equation}
subject to the initial condition
\begin{equation}\label{init}
f_{\la}(\text{--}\,;\mathbf{a})=\delta_{\la,0}
\end{equation}
\end{subequations}
defines a symmetric function.
In the above
$\mathbf{a}'=(a_1',\dots,a_k')=g(\mathbf{a})$.
Of course, \eqref{frule} for $n=1$ combined with \eqref{init} 
implies that
\[
f_{\la}(z;\mathbf{a})=f_{\la/0}(z;\mathbf{a}).
\]
If one wishes to only consider symmetric functions with the
standard property
\begin{equation}\label{boundary}
f_{\la}(x_1,\dots,x_n;\mathbf{a})=0 \qquad \text{if $~l(\la)>n$}
\end{equation}
then the additional condition 
\[
f_{\la/\mu}(z;\mathbf{a})=0 \qquad \text{if $~l(\la)-l(\mu)>1$}
\]
must be imposed.

Because we assume the branching coefficients to be independent of $n$,
it may perhaps seem we are excluding interesting classes of symmetric functions
such as the interpolation Macdonald polynomials.
As will be shown shortly, assuming $n$-independence is not actually
a restriction, and \eqref{Ok} may easily be recovered as a special
case of \eqref{frule}.

Now let us assume that \eqref{frule} yields a symmetric function
$f_{\la}(x_1,\dots,x_n;\mathbf{a})$ for all $n\leq N$.
(For $N=0$ and $N=1$ this is obviously not an assumption.)
Then, 
\begin{align*}
f_{\la}(x_1,\dots,x_{n-1},y,z;\mathbf{a})&=
\sum_{\mu\subseteq\la} f_{\la/\mu}(z;\mathbf{a})
f_{\mu}(x_1,\dots,x_{n-1},y;\mathbf{a}') \\
&=\sum_{\nu\subseteq\mu\subseteq\la} 
f_{\la/\mu}(z;\mathbf{a})
f_{\mu/\nu}(y;\mathbf{a}')
f_{\nu}(x_1,\dots,x_{n-1};\mathbf{a}'')
\end{align*}
is a symmetric function in $x_1,\dots,x_{n-1},y$ (for $n\leq N$).
For it to also be a symmetric function in $x_1,\dots,x_{n-1},y,z$
we must have 
\[
f_{\la}(x_1,\dots,x_{n-1},y,z;\mathbf{a})=
f_{\la}(x_1,\dots,x_{n-1},z,y;\mathbf{a}),
\]
implying that for fixed $\la$ 
\begin{multline*}
\sum_{\nu\subseteq\mu\subseteq\la} 
f_{\la/\mu}(z;\mathbf{a})
f_{\mu/\nu}(y;\mathbf{a}')
f_{\nu}(x_1,\dots,x_{n-1};\mathbf{a}'') \\
=\sum_{\nu\subseteq\mu\subseteq\la} 
f_{\la/\mu}(y;\mathbf{a})
f_{\mu/\nu}(z;\mathbf{a}')
f_{\nu}(x_1,\dots,x_{n-1};\mathbf{a}''),
\end{multline*}
where $\mathbf{a}'':=g(\mathbf{a}')$.
Hence a sufficient condition for \eqref{frule} to yield
a symmetric function is
\begin{equation}\label{suf}
\sum_{\nu\subseteq\mu\subseteq\la} 
f_{\la/\mu}(z;\mathbf{a})
f_{\mu/\nu}(y;\mathbf{a}')=
\sum_{\nu\subseteq\mu\subseteq\la} 
f_{\la/\mu}(y;\mathbf{a})
f_{\mu/\nu}(z;\mathbf{a}')
\end{equation}
for partitions $\la,\nu$ such that $\nu\subseteq\la$.

As a first example let us show how to recover the Macdonald
interpolation polynomials of the introduction. 
To this end we take $\mathbf{a}=(a)$, 
$\mathbf{a'}=(a/t)$, 
and
\[
f_{\la/\mu}(z;\mathbf{a})=
f_{\la/\mu}(z;a)=
\frac{(a/z)_{\la}}{(a/z)_{\mu}}\, P_{\la/\mu}(z).
\]
Clearly, the resulting polynomials $f_{\la}(x;a)$ correspond to the
interpolation polynomials after the specialisation $a=t^{n-1}$.
To see that \eqref{suf} is indeed satisfied we substitute the
above choice for the branching coefficient (recall the convention
that $P_{\la/\mu}:=0$ if $\mu\not\subseteq\la$) to obtain
\begin{equation}\label{Ok2}
\sum_{\mu} 
\frac{(a/z)_{\la}(a/ty)_{\mu}}{(a/z)_{\mu}(a/ty)_{\nu}}\,
P_{\la/\mu}(z) P_{\mu/\nu}(y) 
=\sum_{\mu}
\frac{(a/y)_{\la}(a/tz)_{\mu}}{(a/y)_{\mu}(a/tz)_{\nu}}\,
P_{\la/\mu}(y) P_{\mu/\nu}(z).
\end{equation}
The identity \eqref{Ok2} is easily proved using Rains'
Sears transformation for skew Macdonald polynomials
\cite[Corollary 4.9]{Rains05}
\begin{multline}\label{RainsS2}
\sum_{\mu}
\frac{(aq/b,aq/c)_{\la}(d,e)_{\mu}}{(aq/b,aq/c)_{\mu}(d,e)_{\nu}} \,
P_{\la/\mu}\biggl[\frac{1-aq/de}{1-t}\biggr]
P_{\mu/\nu}\biggl[\frac{aq/de-a^2q^2/bcde}{1-t}\biggr] \\
=\sum_{\mu}
\frac{(aq/d,aq/e)_{\la}(b,c)_{\mu}}{(aq/d,aq/e)_{\mu}(b,c)_{\nu}} \,
P_{\la/\mu}\biggl[\frac{1-aq/bc}{1-t}\biggr]
P_{\mu/\nu}\biggl[\frac{aq/bc-a^2q^2/bcde}{1-t}\biggr].
\end{multline}
After simultaneously replacing $(a,b,c,d,e)\mapsto 
(c,a/tz,cqz/a,a/ty,cqy/a)$ and taking the $c\to\infty$ limit
we obtain \eqref{Ok2}.

If, more generally, we let $(a,b,c,d,e)\mapsto (c,a/bz,cqz/a,a/by,cqy/a)$ 
in \eqref{RainsS2} and take the $c\to\infty$ limit we find that
\begin{multline*}
\sum_{\mu}
\frac{(a/z)_{\la}(a/by)_{\mu}}{(a/z)_{\mu}(a/by)_{\nu}}\,
P_{\la/\mu}\biggl[\frac{z-bz}{1-t}\biggr]
P_{\mu/\nu}\biggl[\frac{y-by}{1-t}\biggr] \\
=\sum_{\mu}
\frac{(a/y)_{\la}(a/bz)_{\mu}}{(a/y)_{\mu}(a/bz)_{\nu}}\,
P_{\la/\mu}\biggl[\frac{y-by}{1-t}\biggr]
P_{\mu/\nu}\biggl[\frac{z-bz}{1-t}\biggr].
\end{multline*}
The Macdonald interpolation polynomials may thus be generalised by
taking $\mathbf{a}=(a,b)$, $\mathbf{a'}=(a/b,b)$ and
\[
f_{\la/\mu}(z;\mathbf{a})=f_{\la/\mu}(z;a,b)=
z^{\abs{\la-\mu}} \frac{(a/z)_{\la}}{(a/z)_{\mu}}\,
P_{\la/\mu}\biggl[\frac{1-b}{1-t}\biggr].
\]

\begin{proposition}
The polynomials
$M_{\la}(x_1,\dots,x_n;a,b)=M_{\la}(x_1,\dots,x_n;a,b;q,t)$
defined by 
\[
M_{\la}(x_1,\dots,x_n;a,b)=
\sum_{\mu} 
x_n^{\abs{\la-\mu}} \frac{(a/x_n)_{\la}}{(a/x_n)_{\mu}}\,
P_{\la/\mu}\biggl[\frac{1-b}{1-t}\biggr]
M_{\mu}(x_1,\dots,x_{n-1};a/b,b)
\]
subject to 
$M_{\la}(\text{--}\,;a,b)=\delta_{\la,0}$
are symmetric. Moreover, the interpolation Macdonald polynomials
corresponds to
\[
M_{\la}(x_1,\dots,x_n)=
M_{\la}(x_1,\dots,x_n;t^{n-1},t).
\]
\end{proposition}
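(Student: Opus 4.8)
The plan is to read off both claims from the machinery already assembled in this section. Take $k=2$, $\mathbf{a}=(a,b)$ and $\mathbf{a}'=g(\mathbf{a})=(a/b,b)$ (so $\mathbf{a}''=(a/b^2,b)$), with branching coefficient $f_{\la/\mu}(z;a,b)$ the one displayed just before the proposition; then $M_\la=f_\la$ in the sense of \eqref{frule}--\eqref{init}, and by the sufficiency criterion of this section it is enough to verify \eqref{suf}. The first move is cosmetic: homogeneity \eqref{hom}, read in $\lambda$-ring form, gives $z^{\abs{\la-\mu}}P_{\la/\mu}[(1-b)/(1-t)]=P_{\la/\mu}[(z-bz)/(1-t)]$, so that
\[
f_{\la/\mu}(z;a,b)=\frac{(a/z)_\la}{(a/z)_\mu}\,
P_{\la/\mu}\biggl[\frac{z-bz}{1-t}\biggr].
\]
With $f$ in this form the two sides of \eqref{suf}, for the above $\mathbf{a},\mathbf{a}'$, are literally the two sides of the last displayed identity before the proposition --- the one obtained by letting $(a,b,c,d,e)\mapsto(c,a/bz,cqz/a,a/by,cqy/a)$ in Rains' Sears transformation \eqref{RainsS2} and sending $c\to\infty$. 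Hence \eqref{suf} holds for all $\la\supseteq\nu$, and the induction on $n$ carried out in this section yields the asserted symmetry of $M_\la(x_1,\dots,x_n;a,b)$.

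For the identification with the interpolation Macdonald polynomials I would specialise $b=t$. Then $\phi_{1,t}(p_r)=(1-t^r)/(1-t^r)=1$, so the evaluation $[(1-t)/(1-t)]$ is nothing but substitution of the single letter $1$, whence $P_{\la/\mu}[(1-t)/(1-t)]=P_{\la/\mu}(1)$; applying \eqref{hom} once more, $x_n^{\abs{\la-\mu}}P_{\la/\mu}(1)=P_{\la/\mu}(x_n)$, and the defining recursion collapses to
\[
M_\la(x_1,\dots,x_n;a,t)=\sum_{\mu\subseteq\la}
\frac{(a/x_n)_\la}{(a/x_n)_\mu}\,P_{\la/\mu}(x_n)\,
M_\mu(x_1,\dots,x_{n-1};a/t,t).
\]
Now set $a=t^{n-1}$. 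The recursion then calls $M_\mu(x_1,\dots,x_{n-1};t^{n-2},t)$, and $t^{n-2}=t^{(n-1)-1}$, so the parameter retains the required form $t^{m-1}$ on $m$ variables at every level of the recursion. Comparing with Okounkov's branching rule \eqref{Ok}, the two recursions coincide and share the initial value at $n=0$; induction on $n$ gives $M_\la(x_1,\dots,x_n)=M_\la(x_1,\dots,x_n;t^{n-1},t)$.

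I do not expect a genuine obstacle: the crucial transformation has already been recorded, so the argument reduces to the two routine uses of \eqref{hom} above together with the observation that the parameter step $a\mapsto a/b$ at $b=t$ is exactly the step $t^{n-1}\mapsto t^{n-2}$ needed to close the induction against \eqref{Ok}. The one thing to watch is not to conflate the genuine skew Macdonald polynomial $P_{\la/\mu}$ with the one-variable abbreviation of \eqref{Plamua}; these coincide exactly when the argument is a single letter --- which is precisely the case used here when $b=t$ --- so no ambiguity actually occurs.
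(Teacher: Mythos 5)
Your argument is correct and is essentially the paper's own: after the homogeneity rewriting, the sufficiency condition \eqref{suf} for $\mathbf{a}=(a,b)$, $\mathbf{a}'=(a/b,b)$ is exactly the identity obtained from Rains' Sears transformation \eqref{RainsS2} with $(a,b,c,d,e)\mapsto(c,a/bz,cqz/a,a/by,cqy/a)$ and $c\to\infty$, and the identification at $b=t$, $a=t^{n-1}$ is the same comparison with Okounkov's branching rule \eqref{Ok} used in the paper. Your closing remark that the one-variable abbreviation \eqref{Plamua} agrees with the genuine skew polynomial on a single letter is exactly the right point to flag, and the parameter bookkeeping $a\mapsto a/t$ versus $t^{n-1}\mapsto t^{n-2}$ is handled correctly.
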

The polynomials $M_{\la}(x_1,\dots,x_n;a,b)$ are an example of a class
of symmetric functions for which $l(\la)>n$ does not imply vanishing.
For example,
\[
M_{\la}(z;a,b)=
z^{\abs{\la}} (a/z)_{\la}\, P_{\la}\biggl[\frac{1-b}{1-t}\biggr]=
t^{n(\la)} z^{\abs{\la}} \frac{(a/z,b)_{\la}}{c_{\la}}.
\]

The next example corresponds to Okounkov's BC$_n$ symmetric
interpolation polynomials \cite{Okounkov98b} 
(see also \cite{Okounkov03,Rains05}).
\begin{proposition}\label{PropBCn}
If we take take $\mathbf{a}=(a,b)$, $\mathbf{a}'=(a/t,b/t)$ and 
\[
f_{\la/\mu}(z;\mathbf{a})=
f_{\la/\mu}(z;a,b)
=\frac{(a/z,bz)_{\la}}{(a/z,bz)_{\mu}}\, P_{\la/\mu}(1/b)
\]
in \eqref{frule} then the resulting functions
$f_{\la}(x;a,b)=f_{\la}(x;a,b;q,t)$ are symmetric.
\end{proposition}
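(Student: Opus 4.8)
The plan is to verify the sufficient condition \eqref{suf} for the branching coefficients of Proposition~\ref{PropBCn}; the symmetry of $f_{\la}(x;a,b)$ then follows from the discussion of Section~3. With $\mathbf{a}'=(a/t,b/t)$ the two ingredients of \eqref{suf} are
\[
f_{\la/\mu}(z;a,b)=\frac{(a/z,bz)_{\la}}{(a/z,bz)_{\mu}}\,P_{\la/\mu}(1/b)
\quad\text{and}\quad
f_{\mu/\nu}(y;a/t,b/t)=\frac{(a/(ty),by/t)_{\mu}}{(a/(ty),by/t)_{\nu}}\,P_{\mu/\nu}(t/b).
\]
First I would strip off the variable-independent scalars: by the single-variable homogeneity \eqref{hom}, $P_{\la/\mu}(1/b)=b^{-\abs{\la-\mu}}P_{\la/\mu}(1)$ and $P_{\mu/\nu}(t/b)=(t/b)^{\abs{\mu-\nu}}P_{\mu/\nu}(1)$, so every summand of \eqref{suf} carries the common factor $b^{-\abs{\la-\mu}}(t/b)^{\abs{\mu-\nu}}=b^{-\abs{\la-\nu}}t^{\abs{\mu-\nu}}$, and the piece $b^{-\abs{\la-\nu}}$ --- independent of $\mu$ and of the labelling of $y$ and $z$ --- cancels from both sides. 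What is left of \eqref{suf} is
\begin{multline*}
\sum_{\mu}\frac{(a/z,bz)_{\la}}{(a/z,bz)_{\mu}}\,
\frac{(a/(ty),by/t)_{\mu}}{(a/(ty),by/t)_{\nu}}\,t^{\abs{\mu-\nu}}\,P_{\la/\mu}(1)\,P_{\mu/\nu}(1) \\
=\sum_{\mu}\frac{(a/y,by)_{\la}}{(a/y,by)_{\mu}}\,
\frac{(a/(tz),bz/t)_{\mu}}{(a/(tz),bz/t)_{\nu}}\,t^{\abs{\mu-\nu}}\,P_{\la/\mu}(1)\,P_{\mu/\nu}(1).
\end{multline*}

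The key step is to recognise this as an instance of Rains' skew Sears transformation \eqref{RainsS2}. Following the convention already used in this section, I would let $(a,b,c,d,e)\mapsto\bigl(ab/(qt),\,bz/t,\,a/(tz),\,a/(ty),\,by/t\bigr)$ in \eqref{RainsS2}, the $a$ and $b$ on the right being those of Proposition~\ref{PropBCn}. A short computation then shows that the two quantities $aq/(bc)$ and $aq/(de)$ that govern all the $\la$-ring arguments of the skew Macdonald polynomials in \eqref{RainsS2} both equal $t$, so that those arguments collapse to the one-letter alphabets $\{1\}$ and $\{t\}$ on both sides; hence $P_{\la/\mu}$ is evaluated at the single letter $1$ and $P_{\mu/\nu}$ at the single letter $t$, and $P_{\mu/\nu}(t)=t^{\abs{\mu-\nu}}P_{\mu/\nu}(1)$ reproduces the factor $t^{\abs{\mu-\nu}}$ above. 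It then remains to check that the prefactor $q$-shifted factorials match: on the left-hand side of \eqref{RainsS2} the ratios $(aq/b,aq/c)_{\la}/(aq/b,aq/c)_{\mu}$ and $(d,e)_{\mu}/(d,e)_{\nu}$ become $(a/z,bz)_{\la}/(a/z,bz)_{\mu}$ and $(a/(ty),by/t)_{\mu}/(a/(ty),by/t)_{\nu}$, while on the $(b,c)\leftrightarrow(d,e)$-swapped right-hand side they become $(a/y,by)_{\la}/(a/y,by)_{\mu}$ and $(a/(tz),bz/t)_{\mu}/(a/(tz),bz/t)_{\nu}$; here one uses only that $(x)_{\la}(x')_{\la}$ is symmetric in $x\leftrightarrow x'$, so the internal splitting of the pairs $\{b,c\}$ and $\{d,e\}$ plays no role. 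This is exactly the displayed identity, so \eqref{suf} holds and $f_{\la}(x;a,b)$ is symmetric.

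I expect the main obstacle to be essentially bookkeeping: keeping the five Sears parameters, the two free variables $y$ and $z$, and the scalars extracted via \eqref{hom} straight, and noticing that --- in contrast to the interpolation Macdonald polynomials and the polynomials $M_{\la}(x;a,b)$ treated above, where a $c\to\infty$ limit of \eqref{RainsS2} was used --- here the result comes out by a \emph{direct} substitution with all parameters finite, precisely because the two $q$-shifted factorials $(a/z)_{\la}$ and $(bz)_{\la}$ in $f_{\la/\mu}(z;a,b)$ exactly fill the two numerator/denominator slots of \eqref{RainsS2}. A final remark worth adding is that, once symmetry is known, the identification of $f_{\la}(x;a,b)$ with a renormalisation of Okounkov's BC$_n$ interpolation polynomials \cite{Okounkov98b} for suitable $a$ and $b$ then follows by matching leading terms and normalisations.
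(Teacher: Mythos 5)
Your proof is correct and follows essentially the same route as the paper: substitute the branching coefficients into the sufficient condition \eqref{suf}, strip the scalars via \eqref{hom}, and recognise the result as Rains' Sears transformation \eqref{RainsS2} under exactly the substitution $(a,b,c,d,e)\mapsto(ab/qt,\,bz/t,\,a/zt,\,a/yt,\,by/t)$ that the paper uses (with no limit needed). Your parameter checks, including that both $\la$-ring arguments collapse to the alphabets $\{1\}$ and $\{t\}$, match the paper's computation.
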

Writing $O_{\la}(x;a,b)$ instead of $f_{\la}(x;a,b)$,
the (Laurent) polynomials $O_{\la}(x;a,b)$ satisfy the
symmetries
\[
O_{\la}(x;a,b)=\Bigl(\frac{a}{b}\Bigr)^{\abs{\la}}O_{\la}(1/x;b,a)=
\Bigl(\frac{a}{b}\Bigr)^{\abs{\la}}q^{2n(\la')}
O_{\la}(1/x;1/a,1/b;1/q,1/t).
\]
(This follows easily using that $P_{\la}(X;1/q,1/t)=P_{\la}(X,q,t)$.)
Moreover, Okounkov's BC$_n$ interpolation Macdonald
polynomials $P_{\la}^{\ast}(x;q,t,s)$ follow as
\[
P_{\la}^{\ast}(x_1,tx_2,\dots,t^{n-1}x_n;q,t,s)
=q^{-n(\la')} O_{\la}(x;1,s^2 t^{2(n-1)};q,t).
\]
(Since $O_{\la}(ax;a,b/a)=a^{\abs{\la}}O_{\la}(x;1,b)$ the 
$O_{\la}(x;a,b)$ are not more general than the
$P_{\la}^{\ast}(x;q,t,s)$.)

\begin{proof}[Proof of Proposition \eqref{PropBCn}]
Substituting the claim in \eqref{suf} and using \eqref{hom} gives
\begin{multline*}
\sum_{\mu} \frac{(a/z,bz)_{\la} (a/yt,byt)_{\mu}}
{(a/z,bz)_{\mu}(a/yt,byt)_{\nu}}\, 
P_{\la/\mu}(1) P_{\mu/\nu}(t) \\
=\sum_{\mu} 
\frac{(a/y,by)_{\la}(a/zt,bzt)_{\mu}}
{(a/y,by)_{\mu}(a/zt,bzt)_{\nu}}\, 
P_{\la/\mu}(1) P_{\mu/\nu}(t).
\end{multline*}
This is \eqref{RainsS2} with
$(a,b,c,d,e)\mapsto (ab/qt,bz/t,a/zt,a/yt,by/t)$.
\end{proof}

Our final example will (in the limit) lead to the functions studied
in the remainder of the paper.
\begin{proposition}\label{propRab}
If we take $\mathbf{a}=(a,b)$, $\mathbf{a}'=(at,b)$ and
\[
f_{\la/\mu}(z;\mathbf{a})=
f_{\la/\mu}(z;a,b)
=\frac{(z/a)_{\la}(bz/t)_{\mu}}
{(z/a)_{\mu}(bz)_{\la}}\, P_{\la/\mu}(a) 
\]
in \eqref{frule} then the resulting functions
$f_{\la}(x;a,b)=f_{\la}(x;a,b;q,t)$ are symmetric.
\end{proposition}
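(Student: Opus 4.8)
By the discussion preceding the proposition it is enough to verify the sufficient condition \eqref{suf} for the branching coefficient
\[
f_{\la/\mu}(z;a,b)=\frac{(z/a)_{\la}\,(bz/t)_{\mu}}{(z/a)_{\mu}\,(bz)_{\la}}\,P_{\la/\mu}(a),
\]
together with $\mathbf a'=(at,b)$, so that $f_{\mu/\nu}(y;at,b)=\frac{(y/at)_{\mu}(by/t)_{\nu}}{(y/at)_{\nu}(by)_{\mu}}\,P_{\mu/\nu}(at)$. First I would substitute these into both sides of \eqref{suf} and use the homogeneity \eqref{hom} in the form $P_{\la/\mu}(a)P_{\mu/\nu}(at)=a^{\abs{\la-\nu}}P_{\la/\mu}(1)P_{\mu/\nu}(t)$. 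Since $a^{\abs{\la-\nu}}$ and all $q$-shifted factorials not containing $\mu$ pull out of the $\mu$-sum, condition \eqref{suf} reduces to the assertion that
\begin{equation*}
F(z,y):=\frac{(z/a)_{\la}(by/t)_{\nu}}{(bz)_{\la}(y/at)_{\nu}}
\sum_{\mu}\frac{(bz/t)_{\mu}(y/at)_{\mu}}{(z/a)_{\mu}(by)_{\mu}}\,P_{\la/\mu}(1)\,P_{\mu/\nu}(t)
\end{equation*}
is symmetric under $z\leftrightarrow y$.

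The plan is then to obtain this symmetry from Rains' Sears transformation \eqref{RainsS2}. The idea is to specialise its parameters so that the four $q$-shifted factorials attached to the summation index become $(z/a)_{\mu}$, $(by)_{\mu}$ in the denominator and $(bz/t)_{\mu}$, $(y/at)_{\mu}$ in the numerator; a short computation shows this forces
\[
(a,b,c,d,e)\longmapsto\Bigl(\tfrac{byz}{qat},\ \tfrac{by}{t},\ \tfrac{z}{at},\ \tfrac{bz}{t},\ \tfrac{y}{at}\Bigr).
\]
For this choice one has $aq/de=aq/bc=t$ and hence $a^2q^2/bcde=t^2$, so that both skew Macdonald polynomials on \emph{each} side of \eqref{RainsS2} collapse to one-variable specialisations, $P_{\la/\mu}\bigl[\tfrac{1-t}{1-t}\bigr]=P_{\la/\mu}(1)$ and $P_{\mu/\nu}\bigl[\tfrac{t-t^2}{1-t}\bigr]=P_{\mu/\nu}(t)$. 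Since also $aq/b=z/a$, $aq/c=by$, $aq/d=y/a$ and $aq/e=bz$, the two sides of \eqref{RainsS2} become exactly $F(z,y)$ and $F(y,z)$, each multiplied by the \emph{same} $\mu$-independent factor $(by)_{\la}(bz)_{\la}\big/\bigl((bz/t)_{\nu}(by/t)_{\nu}\bigr)$; cancelling it yields $F(z,y)=F(y,z)$, and hence \eqref{suf}.

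The one genuinely delicate point is the choice of substitution: demanding the correct four $\mu$-factorials \emph{and} $aq/de=t$ over-determines $(a,b,c,d,e)$, so one must check that the resulting constraint $aq=byz/(at)$ is consistent — it is, and it pins the substitution down uniquely up to the obvious $b\leftrightarrow c$, $d\leftrightarrow e$ freedom. Everything after that, namely the two $P$-collapses and the matching of the two prefactors, is routine. (If one prefers not to quote \eqref{RainsS2}, the reduced identity $F(z,y)=F(y,z)$ can instead be attacked directly from the Cauchy-type identity \eqref{skewC} after expanding $P_{\mu/\nu}(t)$ via \eqref{PXYa}, though I expect this merely reproduces the same transformation in disguise.)
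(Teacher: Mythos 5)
Your proof is correct and follows essentially the same route as the paper: substitute the branching coefficients into \eqref{suf}, use homogeneity to reduce to a $\mu$-sum identity, and recognise it as the specialisation of Rains' Sears transformation \eqref{RainsS2} under precisely the substitution $(a,b,c,d,e)\mapsto(byz/aqt,\,by/t,\,z/at,\,bz/t,\,y/at)$, which the paper also uses. The only cosmetic difference is that you factor out the symmetric prefactor $(by,bz)_{\la}/(by/t,bz/t)_{\nu}$ explicitly before invoking \eqref{RainsS2}, whereas the paper keeps it inside the identity.
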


\begin{proof}
Substituting the claim in \eqref{suf} and using \eqref{hom} gives
\begin{multline*}
\sum_{\mu}
\frac{(z/a,by)_{\la}(bz/t,y/at)_{\mu}}
{(z/a,by)_{\mu}(bz/t,y/at)_{\nu}}\, 
P_{\la/\mu}(1) P_{\mu/\nu}(t)\\
=\sum_{\mu}
\frac{(y/a,bz)_{\la}(by/t,z/at)_{\mu}}
{(y/a,bz)_{\mu}(by/t,z/at)_{\nu}}\, 
P_{\la/\mu}(1) P_{\mu/\nu}(t).
\end{multline*}
This is \eqref{RainsS2} with 
$(a,b,c,d,e)\mapsto (byz/aqt,by/t,z/at,bz/t,y/at)$.
\end{proof}

If we write $R_{\la}(x;a,b)$ instead of $f_{\la}(x;a,b)$
the symmetric functions of Proposition~\ref{propRab}
correspond to the functions described by the branching rule 
\eqref{Rab} of the introduction. 
As already mentioned there, the $R_{\la}(x;a,b)$ are not new, 
and follow as a special limiting case of
much more general functions studied by Rains \cite{Rains03,Rains06}. 
More specifically, Rains defined a family of abelian interpolation 
functions
\[
R^{\ast(n)}_{\la}(x;a,b)=R^{\ast(n)}_{\la}(x;a,b;q,t;p),
\]
where $x=(x_1,\dots,x_n)$. The $R^{\ast(n)}_{\la}(x)$ are BC$_n$ symmetric
and, apart from parameters $a,b,q,t$, depend on an elliptic nome $p$.
In \cite[Theorem 4.16]{Rains06} Rains proved the branching rule
\[
R^{\ast(n+1)}_{\la}(x_1,\dots,x_{n+1};a,b)=\sum_{\mu\subseteq\la} 
c_{\la\mu}^{(n)}(x_{n+1};a,b) R^{\ast(n)}_{\mu}(x_1,\dots,x_n;a,b),
\]
where the branching coefficient
$c_{\la\mu}^{(n)}(z;a,b)=c^{(n)}_{\la\mu}(z;a,b;q,t;p)$
is expressed in terms of the elliptic binomial coefficient 
$\qbinE{\la}{\mu}_{[a,b](v_1,\dots,v_k)}$ 
(see \cite[Equation (4.2)]{Rains03}) as
\begin{equation}\label{clamu}
c_{\la\mu}^{(n)}(z;a,b)=\qbinE{\la}{\mu}_{[at^n/b,t](azt^n,at^n/z,pqa/tb)}.
\end{equation}
If we define
\begin{align*}
R_{\la}(x;a,b)&=
R_{\la}(x;a,b;q,t) \\ &=
\Bigl(\frac{t^{1-n}}{b}\Bigr)^{\abs{\la}}
P_{\la}(\spec{0})
\lim_{p\to 0}
R^{\ast(n)}_{\la}(p^{1/4}x;p^{1/4}at^{n-1},p^{3/4}b/q;1/q,1/t;p)
\end{align*}
and compute the corresponding limit of \eqref{clamu}
we obtain the branching rule \eqref{Rab} with $n\mapsto n+1$.

\section{The symmetric function $R_{\la}(x;b)$}
In the remainder of the paper we consider the symmetric function
\begin{align*}
R_{\la}(X;b)
&=(-1)^{\abs{\la}} q^{-n(\la')} t^{n(\la)} \lim_{a\to 0} R_{\la}(X;a,b) \\
&=q^{-n(\la')} t^{n(\la)} 
\Bigl(-\frac{t^{1-n}}{b}\Bigr)^{\abs{\la}}
P_{\la}(\spec{0})  \hspace{2cm} (n:=\abs{X}) \\
&\qquad\qquad \times\lim_{a\to 0} \lim_{p\to 0}
R^{\ast(n)}_{\la}(p^{1/4}X;p^{1/4}at^{n-1},p^{3/4}b/q;1/q,1/t;p)
\end{align*}
which, alternatively, is defined by the branching rule \eqref{Rbf}.
Because $R_{\la}$ is a limiting case of the abelian interpolation
function $R^{\ast(n)}_{\la}$ many properties of former follow
by taking appropriate limits in the results of \cite{Rains03,Rains06}.
For example, it follows from \cite[Proposition 3.9]{Rains06} 
that the
$R_{\la}(X;b)$ for $X=\{x_1,\dots,x_n\}$
satisfy a $q$-difference equation generalising \eqref{MacOp}.
Specifically, with $D_n(b,c)$ the generalised Macdonald operator
\[
D_n(b,c)=\sum_{I\subseteq [n]}(-1)^{\abs{I}}t^{\binom{\abs{I}}{2}}
\prod_{\substack{i\in I \\ j\not\in I}} \frac{tx_i-x_j}{x_i-x_j}
\prod_{j\not\in I}(1-bx_j)
\prod_{i\in I} (c-bt^{1-n}x_i)T_{q,x_i}
\]
we have
\begin{equation}\label{Dnbc}
D_n(b,c) R_{\la}(X;b)=R_{\la}(X;bq)\prod_{i=1}^n (1-cq^{\la_i}t^{n-i}).
\end{equation}

\medskip

Below we will first prove a number of elementary properties of the
functions $R_{\la}(X;b)$ using only the branching rule \eqref{Rbf}.
Like the previous result, most of these can also be 
obtained by taking appropriate limits in results of Rains 
for the abelian interpolation functions
$R^{\ast(n)}_{\la}(X;a,b)$.
Then we give several deeper results for $R_{\la}(X;b)$
(such as Theorem~\ref{thmskewRC} and Corollaries~\ref{Pieri},
\ref{CorGauss} and \ref{CorKTW}) 
that, to the best of our knowledge, have no analogues for 
$R^{\ast(n)}_{\la}(X;a,b)$ or $R_{\la}(X;a,b)$.
First however we restate the branching rule \eqref{Rbf}
in the equivalent form
\begin{equation}\label{R3}
R_{\la}(X;b)=\sum_{\mu} \frac{(bz/t)_{\mu}}{(bz)_{\la}}\, 
P_{\la/\mu}(z) R_{\mu}(Y;b),
\end{equation}
where $X=Y+z$.

When $X=\{z\}$ we find from \eqref{R3} that
\begin{equation}\label{n1}
R_{(k)}(z;b)=\frac{z^k}{(bz)_k}.
\end{equation}
{}From this it is clear that $R_{(k)}(cz;b)=c^k R_{(k)}(z;bc)$ 
and that in the $c\to\infty$ limit $R_{(k)}(cz;b)$ is given by 
$(-b)^{-k} q^{-\binom{k}{2}}$.
It also shows that
\[
R_{(k+1)}(z;b)=R_{(k)}(z;bq).
\]
All three statements easily generalise to arbitrary $X$.

\begin{lemma}\label{Lembindep}
For $c$ a scalar,
\[
R_{\la}(cX;b)=c^{\abs{\la}}R_{\la}(X;bc).
\]
\end{lemma}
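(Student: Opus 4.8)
The plan is to prove the scaling identity $R_{\la}(cX;b)=c^{\abs{\la}}R_{\la}(X;bc)$ by induction on $n=\abs{X}$, using the branching rule \eqref{R3} as the sole tool. The base case $n=0$ is the initial condition $R_{\la}(\text{--};b)=\delta_{\la,0}$, which is trivially invariant under any rescaling, and the case $n=1$ is exactly the observation following \eqref{n1} that $R_{(k)}(cz;b)=c^kR_{(k)}(z;bc)$, read off from the explicit formula $R_{(k)}(z;b)=z^k/(bz)_k$.

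For the inductive step, write $X=Y+z$ with $\abs{Y}=n-1$, so that $cX=cY+cz$. Applying \eqref{R3} to $R_{\la}(cX;b)$ with the letter $cz$ split off gives
\[
R_{\la}(cX;b)=\sum_{\mu} \frac{(bcz/t)_{\mu}}{(bcz)_{\la}}\,P_{\la/\mu}(cz)\,R_{\mu}(cY;b).
\]
Now I would invoke three facts: the homogeneity \eqref{hom} of the skew Macdonald polynomials, $P_{\la/\mu}(cz)=c^{\abs{\la-\mu}}P_{\la/\mu}(z)$ (here a single-variable alphabet, but \eqref{hom} applies verbatim); the inductive hypothesis $R_{\mu}(cY;b)=c^{\abs{\mu}}R_{\mu}(Y;bc)$; and the obvious fact that the $q$-shifted factorials depend on $b$ and $z$ only through the product $bz$, so $(bcz/t)_{\mu}=((bc)z/t)_{\mu}$ and $(bcz)_{\la}=((bc)z)_{\la}$. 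Substituting all three, the prefactor $c^{\abs{\la-\mu}}\cdot c^{\abs{\mu}}=c^{\abs{\la}}$ comes out of the sum, and what remains is
\[
R_{\la}(cX;b)=c^{\abs{\la}}\sum_{\mu}\frac{((bc)z/t)_{\mu}}{((bc)z)_{\la}}\,P_{\la/\mu}(z)\,R_{\mu}(Y;bc)=c^{\abs{\la}}R_{\la}(X;bc),
\]
the last equality being \eqref{R3} again, now with parameter $bc$ in place of $b$.

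There is essentially no obstacle here: the identity is a formal consequence of the branching recursion together with the homogeneity of the skew Macdonald polynomials, and the only thing to check carefully is the bookkeeping of the exponent of $c$ (namely that $\abs{\la-\mu}+\abs{\mu}=\abs{\la}$) and that every occurrence of $b$ in the branching coefficient is accompanied by a matching $z$, so that the rescaling $z\mapsto cz$ is absorbed cleanly into $b\mapsto bc$. One should note that the sum over $\mu$ is effectively finite (only $\mu\subseteq\la$ contribute, by the convention $P_{\la/\mu}=0$ otherwise), so there are no convergence issues, and the induction is well-founded. If a cleaner statement is wanted one can phrase the whole argument as: the branching coefficient $(bz/t)_{\mu}(bz)_{\la}^{-1}P_{\la/\mu}(z)$ equals $c^{\abs{\la-\mu}}$ times the same coefficient with $(z,b)\mapsto(cz,b/c)$ — equivalently with $z\mapsto cz$ and $b\mapsto bc$ producing the identity as stated — and then the result propagates through \eqref{R3} by a one-line induction.
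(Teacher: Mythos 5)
Your proof is correct and is essentially identical to the paper's own argument: induction on $\abs{X}$ via the branching rule \eqref{R3}, using the homogeneity \eqref{hom} of $P_{\la/\mu}$ and the fact that the branching coefficient depends on $b$ and $z$ only through $bz$. The base case via \eqref{n1} and the exponent bookkeeping $\abs{\la-\mu}+\abs{\mu}=\abs{\la}$ match the paper's treatment exactly.
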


\begin{lemma}\label{Lemclim}
For $c$ a scalar and $n:=\abs{X}$,
\[
\lim_{c\to\infty} R_{\la}(cX;b)=
\Bigl(-\frac{t^{1-n}}{b}\Bigr)^{\abs{\la}}
q^{-n(\la')}t^{n(\la)}
P_{\la}(\spec{0}).
\]
\end{lemma}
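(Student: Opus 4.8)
The plan is to prove Lemma~\ref{Lemclim} by induction on $n=\abs{X}$, using the branching rule \eqref{R3} together with the single-variable evaluation \eqref{n1} and the $c$-homogeneity of Lemma~\ref{Lembindep}. For the base case $n=1$ we have $R_{(k)}(cz;b)=(cz)^k/(bcz)_k$ by \eqref{n1}, and letting $c\to\infty$ kills all but the top-degree term in the denominator: $(bcz)_k=\prod_{i=0}^{k-1}(1-bcz q^i)\sim (-bcz)^k q^{\binom k2}$, so $R_{(k)}(cz;b)\to (-b)^{-k}q^{-\binom k2}$, which matches the claimed formula since for a one-row partition $n(\la')=\binom k2$, $n(\la)=0$, $t^{1-n}=t^0=1$, and $P_{(k)}(\spec 0)=P_{(k)}(1)=1$ when $n=1$.

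For the inductive step, write $X=Y+z$ with $\abs{Y}=n-1$ and apply the branching rule in the form \eqref{R3} to $R_{\la}(cX;b)=R_{\la}(cY+cz;b)$. First use Lemma~\ref{Lembindep} (or just rescale $z\mapsto cz$ directly in \eqref{R3}) to get
\[
R_{\la}(cX;b)=\sum_{\mu\subseteq\la}\frac{(bcz/t)_{\mu}}{(bcz)_{\la}}\,P_{\la/\mu}(cz)\,R_{\mu}(cY;b).
\]
Now take $c\to\infty$ termwise. By \eqref{hom} we have $P_{\la/\mu}(cz)=(cz)^{\abs{\la-\mu}}P_{\la/\mu}(z)$; by the induction hypothesis $R_{\mu}(cY;b)\sim(-t^{2-n}/b)^{\abs{\mu}}q^{-n(\mu')}t^{n(\mu)}P_{\mu}(\spec 0)$ (note the alphabet $Y$ has size $n-1$); and the $q$-shifted factorial ratio behaves as
\[
\frac{(bcz/t)_{\mu}}{(bcz)_{\la}}\sim\frac{(-bcz/t)^{\abs{\mu}}q^{\sum_i\binom{\mu_i}2}t^{-\sum_i(1-i)\mu_i}}{(-bcz)^{\abs{\la}}q^{\sum_i\binom{\la_i}2}t^{-\sum_i(1-i)\la_i}},
\]
using \eqref{bla} and the fact that each $(bt^{1-i}w)_{k}\sim(-bt^{1-i}w)^{k}q^{\binom k2}$ as $w\to\infty$. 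Collecting the powers of $c$: the denominator contributes $c^{-\abs{\la}}$, while $P_{\la/\mu}(cz)$ gives $c^{\abs{\la-\mu}}$ and $R_{\mu}(cY;b)$ gives $c^{\abs{\mu}}$, so the total power of $c$ is zero and the limit is finite. The surviving $z$-dependence is $z^{\abs{\mu}}\cdot z^{\abs{\la-\mu}}\cdot z^{-\abs{\la}}=z^0$, so the result is correctly independent of $z$, as it must be.

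What remains is to check that the resulting sum over $\mu$ collapses to the claimed closed form. After substituting everything and cancelling the powers of $c$, $z$, and assembling the powers of $q$ and $t$, the $\mu$-sum becomes a constant multiple of
\[
\sum_{\mu\subseteq\la}\Bigl(\frac{t^{\,?}}{q^{\,?}}\Bigr)P_{\la/\mu}(z)\,P_{\mu}(\spec 0)
\]
evaluated at the relevant specialisation, and the key identity to invoke is the addition formula \eqref{PXYa}, namely $P_{\la}(\spec 0 + z)=\sum_{\mu}P_{\la/\mu}(z)P_{\mu}(\spec 0)$ where $\spec 0$ for the $(n-1)$-letter alphabet is $(1,t,\dots,t^{n-2})$, so that $\spec 0 + z$ with $z=t^{n-1}$ recovers the $n$-letter principal specialisation. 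The main obstacle — really the only nontrivial bookkeeping — is tracking the powers of $q$ and $t$ coming from the three asymptotic factors and showing they combine, via the recursions $n(\la)=n(\mu)+\sum_i(i-1)(\la_i-\mu_i)+\cdots$ and the analogous one for $n(\la')$ together with $n(\la')=\sum_i\binom{\la_i}2$, into exactly $q^{-n(\la')}t^{n(\la)}(-t^{1-n}/b)^{\abs{\la}}$ times $P_{\la}(\spec 0)$; once the exponent arithmetic is done correctly this is forced. Alternatively, and perhaps more cleanly, one can avoid the induction entirely by noting that $\lim_{c\to\infty}R_{\la}(cX;b)$ already appears as the explicitly-computed normalisation constant in the definition of $R_{\la}(X;b)$ at the start of Section~4 (it is the prefactor $q^{-n(\la')}t^{n(\la)}(-t^{1-n}/b)^{\abs{\la}}P_{\la}(\spec 0)$ extracted from Rains' interpolation functions), so the lemma can also be read off directly from that definition; but the self-contained branching-rule argument above is the one I would present.
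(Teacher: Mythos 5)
This is essentially the paper's own proof: induction on $n=\abs{X}$ via the branching rule \eqref{R3}, the large-$c$ asymptotics of the Pochhammer ratio, and the addition formula \eqref{PXYa} with the $(n-1)$-letter principal specialisation plus the extra letter $t^{n-1}$ reassembling $P_{\la}(\spec{0})$. The exponent bookkeeping you defer does close exactly as you predict --- after the $c$ and $z$ powers cancel, the residual factor $t^{(1-n)\abs{\mu}}P_{\la/\mu}(1)$ equals $t^{(1-n)\abs{\la}}P_{\la/\mu}(t^{n-1})$ by homogeneity, and \eqref{PXYa} then yields $P_{\la}(\spec{0})$ --- though note the small sign slip in your displayed asymptotics, where $t^{-\sum_i(1-i)\mu_i}$ should read $t^{\sum_i(1-i)\mu_i}$ (and likewise for $\la$).
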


\begin{lemma}\label{Lemreduce}
Let $n:=\abs{X}$ and $\la$ a partition such that 
$l(\la)=n$. Define $\mu:=(\la_1-1,\dots,\la_n-1)$. Then
\[
R_{\la}(X;b)=R_{\mu}(X;bq)\prod_{x\in X} \frac{x}{1-bx}.
\]
\end{lemma}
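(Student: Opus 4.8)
The plan is to prove the lemma by induction on $n=\abs{X}$. The base case $n=1$ reduces to \eqref{n1}: if $\la=(k)$ with $k\geq 1$ then $\mu=(k-1)$, and $R_{(k)}(z;b)=z^k/(bz)_k$ equals $R_{(k-1)}(z;bq)\cdot z/(1-bz)=z^k/\bigl((1-bz)(bqz)_{k-1}\bigr)$ since $(bz)_k=(1-bz)(bqz)_{k-1}$.

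For the inductive step I would write $X=Y+z$ with $\abs{Y}=n-1$, put $\nu:=(\la_1-1,\dots,\la_n-1)$, and expand both $R_\la(X;b)$ and $R_\nu(X;bq)$ by the branching rule \eqref{R3}. First note that (by a one-line induction on $\abs{Z}$ from \eqref{Rbf}) $R_\kappa(Z;b)=0$ whenever $l(\kappa)>\abs{Z}$; hence in the expansion of $R_\la(X;b)$ a term $P_{\la/\mu}(z)R_\mu(Y;b)$ can be nonzero only if $\la/\mu$ is a horizontal strip (so $\mu_{n-1}\geq\la_n\geq 1$, forcing $l(\mu)\geq n-1$) and $l(\mu)\leq\abs{Y}=n-1$, i.e.\ $l(\mu)=n-1$ exactly. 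For every such $\mu$ the induction hypothesis gives $R_\mu(Y;b)=R_\rho(Y;bq)\prod_{y\in Y}y/(1-by)$, where $\rho:=(\mu_1-1,\dots,\mu_{n-1}-1)$. The map $\mu\mapsto\rho$ is a bijection from $\{\mu:\la/\mu\text{ a horizontal strip}\}$ onto the index set $\{\rho:\nu/\rho\text{ a horizontal strip},\ l(\rho)\leq n-1\}$ of the nonzero terms of $R_\nu(X;bq)$, as one checks by comparing the defining inequalities $\la_i\geq\mu_i\geq\la_{i+1}$ with $\nu_i\geq\rho_i\geq\nu_{i+1}$.

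It then remains to match the summands term by term under this bijection, i.e.\ to prove
\[
\frac{(bz/t)_\mu}{(bz)_\la}\,P_{\la/\mu}(z)=\frac{z}{1-bz}\cdot\frac{(bqz/t)_\rho}{(bqz)_\nu}\,P_{\nu/\rho}(z).
\]
For the single-variable skew polynomials I use $P_{\alpha/\beta}(z)=z^{\abs{\alpha-\beta}}\psi_{\alpha/\beta}$; here $\abs{\la-\mu}=\abs{\nu-\rho}+1$, and direct inspection of \eqref{psi} shows $\psi_{\la/\mu}=\psi_{\nu/\rho}$ — in the product over $1\leq i\leq j\leq n-1$ one has $\mu_k=\rho_k+1$ for $k\leq n-1$ and $\la_k=\nu_k+1$ for $k\leq n$, so each factor of $\psi_{\la/\mu}$ is literally the corresponding factor of $\psi_{\nu/\rho}$, while if $l(\rho)<n-1$ the extra factors of $\psi_{\nu/\rho}$ with $j\geq l(\rho)+1$ equal $1$ because $\nu_{j+1}=0$ there. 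For the $q$-shifted factorials, iterating $(x)_m=(1-x)(xq)_{m-1}$ over the rows gives $(bz)_\la=(bz)_{(1^n)}(bqz)_\nu$ and $(bz/t)_\mu=(bz/t)_{(1^{n-1})}(bqz/t)_\rho$, together with $(bz)_{(1^n)}/(bz/t)_{(1^{n-1})}=1-bz$. Combining these three observations proves the displayed identity; summing it over $\mu$ (equivalently over $\rho$) and pulling out $\prod_{y\in Y}y/(1-by)$ yields $R_\la(X;b)=\frac{z}{1-bz}\prod_{y\in Y}\frac{y}{1-by}\,R_\nu(X;bq)=R_\nu(X;bq)\prod_{x\in X}x/(1-bx)$, completing the induction.

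The only step that is not pure bookkeeping is the column-stripping invariance $\psi_{\la/\mu}=\psi_{\nu/\rho}$, and in particular the care needed when $l(\rho)<n-1$, since then the two products in \eqref{psi} run over different ranges of $j$; once that is settled, everything else is a routine manipulation of $q$-shifted factorials.
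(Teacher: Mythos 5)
Your proof is correct and takes essentially the same route as the paper's: induction on $\abs{X}$ via the branching rule \eqref{R3}, restricting the sum to partitions of length exactly $n-1$, and reducing everything to the two shift identities $P_{\la/\mu}(z)=z\,P_{\nu/\rho}(z)$ and $(bz/t)_{\mu}/(bz)_{\la}=\frac{1}{1-bz}\,(bqz/t)_{\rho}/(bqz)_{\nu}$. The only difference is one of detail: the paper asserts these identities without proof, while you verify them directly from \eqref{psi} and \eqref{bla}, including the boundary case $l(\rho)<n-1$, which is indeed the one point requiring care.
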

This last result allows the definition of
$R_{\la}(X;b)$ to be extended to all weakly decreasing
integer sequences $\la=(\la_1,\dots,\la_n)$.

\begin{proof}[Proof of Lemmas~\ref{Lembindep}--\ref{Lemreduce}]
By \eqref{n1} all three statements are obviously true for $X$ a
single-letter alphabet, and we proceed by induction on $n$,
the cardinality of $X$.

By \eqref{R3},
\begin{equation}\label{Rcyb}
R_{\la}(cX;b)=\sum_{\mu} \frac{(bcz/t)_{\mu}}{(bcz)_{\la}}\, 
P_{\la/\mu}(cz) R_{\mu}(cY;b).
\end{equation}
Using \eqref{hom} and the appropriate induction hypothesis this yields
\[
R_{\la}(cX;b)=c^{\abs{\la}}
\sum_{\mu} \frac{(bcz/t)_{\mu}}{(bcz)_{\la}}\, 
P_{\la/\mu}(z) \, R_{\mu}(Y;bc) \\
=c^{\abs{\la}} R_{\la}(X;bc),\notag
\]
establishing the first lemma.

Taking the $c\to\infty$ limit on both sides of \eqref{Rcyb} and then
using induction we get
\begin{align*}
\lim_{c\to\infty} R_{\la}(cX;b)
&=\sum_{\mu} (-b)^{\abs{\mu}-\abs{\la}} q^{n(\mu')-n(\la')} 
t^{n(\la)-n(\mu)-\abs{\mu}}
P_{\la/\mu}(1) \, \lim_{c\to\infty} R_{\mu}(cY;b) \\
&=\Bigl(-\frac{t^{1-n}}{b}\Bigr)^{\abs{\la}} q^{-n'(\la)}t^{n(\la)}
\sum_{\mu}
P_{\la/\mu}(t^{n-1}) P_{\mu}(t^{n-2},\dots,t,1) \\
&=\Bigl(-\frac{t^{1-n}}{b}\Bigr)^{\abs{\la}} q^{-n'(\la)}t^{n(\la)}
P_{\la}(\spec{0}),
\end{align*}
where the last equality follows from \eqref{PXYa}.

To prove the final lemma we consider \eqref{Rcyb} with $c=1$
and, in accordance with the conditions of Lemma~\ref{Lemreduce},
with $\la_n\geq 1$. 
Since $P_{\la/\nu}(a)$ vanishes unless $\la-\nu$ is a horizontal
strip this implies that $\nu_{n-1}\geq 1$.
The summand also vanishes if $l(\nu)>n-1$ so that we
we may assume that $l(\nu)=n-1$.
Defining $\eta=(\nu_1-1,\dots,\nu_{n-1}-1)$ and
$\mu=(\la_1-1,\dots,\la_n-1)$ and
using induction, as well as
\[
\frac{(bz/t)_{\nu}}{(bz)_{\la}}=
\frac{1}{1-bz}\, \frac{(bzq/t)_{\eta}}{(bzq)_{\mu}}
\qquad\text{and}\qquad
P_{\la/\nu}(z)=z P_{\mu/\eta}(z),
\]
we get
\begin{align*}
R_{\la}(X;b)
&=\frac{z}{1-bz} \biggl(\:\prod_{x\in Y} \frac{x}{1-bx}\biggr)
\sum_{\eta\subseteq\mu}
\frac{(bzq/t)_{\eta}}{(bzq)_{\mu}}\, P_{\mu/\eta}(z) R_{\eta}(Y;bq) \\
&=R_{\mu}(X;bq)\,\prod_{x\in X}\frac{x}{1-bx}.
\end{align*}
where in the final step we have used \eqref{R3} and $X=Y+z$.
\end{proof}

\begin{proposition}[Principal specialisation]\label{PSP}
For $\la$ such that $l(\la)\leq n$,
\[
R_{\la}(\spec{0};b)=\frac{P_{\la}(\spec{0})}{(bt^{n-1})_{\la}}\,
=\frac{t^{n(\la)}(t^n)_{\la}}{(bt^{n-1})_{\la}\, c_{\la}}.
\]
\end{proposition}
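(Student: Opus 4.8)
The plan is to prove the principal specialisation formula by induction on $n=\abs{X}$, feeding the branching rule \eqref{R3} with the distinguished letter $z=t^{n-1}$ so that $X=\spec{0}_n=\{1,t,\dots,t^{n-1}\}$ decomposes as $Y+z$ with $Y=\{1,t,\dots,t^{n-2}\}$, which is exactly the principal specialisation alphabet for $n-1$ variables. The base case $n=1$ is immediate from \eqref{n1}: $R_{(k)}(1;b)=1/(b)_k$, which matches $P_{(k)}(\spec{0};q,t)/(b)_{(k)}$ after recalling $P_{(k)}(1)=1$. For the inductive step, \eqref{R3} gives
\[
R_{\la}(\spec{0}_n;b)=\sum_{\mu} \frac{(bt^{n-1}/t)_{\mu}}{(bt^{n-1})_{\la}}\,
P_{\la/\mu}(t^{n-1}) R_{\mu}(\spec{0}_{n-1};b)
=\frac{1}{(bt^{n-1})_{\la}}\sum_{\mu} (bt^{n-2})_{\mu}\,
P_{\la/\mu}(t^{n-1})\, \frac{P_{\mu}(\spec{0}_{n-1})}{(bt^{n-2})_{\mu}},
\]
and the factors $(bt^{n-2})_{\mu}$ cancel, leaving
\[
R_{\la}(\spec{0}_n;b)=\frac{1}{(bt^{n-1})_{\la}}\sum_{\mu}
P_{\la/\mu}(t^{n-1})\, P_{\mu}(1,t,\dots,t^{n-2}).
\]

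The sum is now exactly $P_{\la}(\spec{0}_n)$ by the defining property \eqref{PXYa} of skew Macdonald polynomials together with homogeneity \eqref{hom}: writing $P_{\la}(Y+z)=\sum_\mu P_{\la/\mu}(z)P_\mu(Y)$ with $Y=\{1,t,\dots,t^{n-2}\}$ and $z=t^{n-1}$ reconstitutes $P_\la(1,t,\dots,t^{n-1})=P_\la(\spec{0}_n)$. This gives the first equality $R_\la(\spec{0};b)=P_\la(\spec{0})/(bt^{n-1})_\la$. The second equality is then just the classical principal specialisation formula $P_\la(\spec{0})=t^{2n(\la)}(t^n)_\la/(c_\la c'_\la)$ recorded in the excerpt right after \eqref{Qat}, rewritten using $c'_\la = t^{n(\la)}(t^n)_\la /\bigl(\text{something}\bigr)$—more directly, one simply substitutes $\P_\la[(1-t^n)/(1-t)]=\P_\la(\spec{0})$ and the relation \eqref{norm} between $\P_\la$ and $P_\la$ to read off $P_\la(\spec{0})=t^{n(\la)}(t^n)_\la/c_\la$, which is precisely the claimed form.

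I do not expect a genuine obstacle here; the only point requiring a moment's care is bookkeeping the $b$-dependence through the induction—specifically checking that the $b$ appearing in the branching coefficient at step $n$ is the same $b$ as in $R_\mu(\spec{0}_{n-1};b)$ (it is, since the branching rule \eqref{Rbf} does not shift $b$, in contrast with \eqref{Rab}), so that the telescoping $(bt^{n-2})_\mu$ really does cancel. One should also note that the length restriction $l(\la)\le n$ is exactly what guarantees $P_\la(\spec{0}_n)\ne 0$ and keeps all the sums finite and well-defined, and that $R_\mu(\spec{0}_{n-1};b)$ is only invoked for $\mu\subseteq\la$ with $l(\mu)\le n-1$, which is automatic since $P_{\la/\mu}(t^{n-1})$ vanishes unless $\la-\mu$ is a horizontal strip, forcing $l(\mu)\ge l(\la)-1$; combined with $l(\mu)\le l(\la)\le n$ this is compatible with $l(\mu)\le n-1$ except possibly when $l(\la)=n$, in which case the terms with $l(\mu)=n$ contribute $P_\mu(\spec{0}_{n-1})=0$ and drop out harmlessly.
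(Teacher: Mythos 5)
Your proof is correct and is essentially the paper's own argument: the paper iterates the branching rule \eqref{Rbf} over the letters $t^{n-1},\dots,t,1$ (stating it as a generalised branching rule with extra free variables $x_1,\dots,x_m$ and then setting $m=0$), which is exactly your induction on $n$ with the letter $z=t^{n-1}$ split off, the same telescoping of the factors $(bt^{n-2})_{\mu}$, and the same reassembly $\sum_{\mu}P_{\la/\mu}(t^{n-1})P_{\mu}(\spec{0}_{n-1})=P_{\la}(\spec{0}_n)$ via \eqref{PXYa}. The only cosmetic difference is that the paper's formulation yields the slightly more general identity with $m$ additional variables, and your closing remark about the terms with $l(\mu)=n$ is cleaner if phrased as $R_{\mu}(\spec{0}_{n-1};b)=0$ for $l(\mu)>n-1$ (immediate from the branching rule) rather than via the induction hypothesis.
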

By Lemma~\ref{Lembindep} this may be stated slightly more
generally as
\begin{equation}\label{eqpsp2}
R_{\la}(a\spec{0};b)=\frac{P_{\la}(a\spec{0})}
{(abt^{n-1})_{\la}}.
\end{equation}

\begin{proof}
Iterating \eqref{Rbf} using
\[
\sum_{\nu} P_{\la/\nu}(X)P_{\nu/\mu}(Y)=P_{\la/\mu}(X+Y),
\]
we obtain the generalised branching rule
\[
R_{\la}(x_1,\dots,x_m,t^{n-1},\dots,t,1;b)
=\sum_{\mu} \frac{(b/t)_{\mu}}{(bt^{n-1})_{\la}}\, 
P_{\la/\mu}(\spec{0}) R_{\mu}(x_1,\dots,x_m;b),
\]
for $l(\la)\leq n+m$.
When $m=0$ this results in the claim.
\end{proof}

\begin{proposition}[Evaluation symmetry] 
For $\la$ such that $l(\la)\leq n$ set 
\[
\spec{\la}=(q^{\la_1}t^{n-1},\dots,q^{\la_{n-1}}t,1).
\]
Then
\[
\frac{R_{\la}(a\spec{\mu};b)}{R_{\la}(a\spec{0};b)}=
\frac{R_{\mu}(a\spec{\la};b)}{R_{\mu}(a\spec{0};b)}.
\]
\end{proposition}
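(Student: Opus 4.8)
The plan is to prove the identity by induction on $n=\abs{X}$, using the branching rule \eqref{Rbf} to strip one letter off the spectral alphabet and the principal specialisation formula (Proposition~\ref{PSP}) together with the homogeneity Lemma~\ref{Lembindep} to reorganise what remains. The base case $n\le1$ is elementary: by \eqref{n1} both ratios equal $q^{\la_1\mu_1}(ab)_{\la_1}/(abq^{\mu_1})_{\la_1}$, which is visibly symmetric in $\la_1\leftrightarrow\mu_1$. (At the other extreme, $b=0$ gives $R_{\la}(X;0)=P_{\la}(X)$ and the statement recovers the classical evaluation symmetry of Macdonald polynomials, so the Proposition is a one-parameter refinement of it.)

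For the inductive step I would write $X=a\spec{\mu}$ and, using that $R_{\la}$ is a symmetric function, split off the first letter $x_1=aq^{\mu_1}t^{n-1}$; the remaining $n-1$ letters form the $(n-1)$-variable spectral alphabet $a\spec{\mu_{\ge2}}$, where $\mu_{\ge2}=(\mu_2,\dots,\mu_n)$. Applying \eqref{R3} expresses $R_{\la}(a\spec{\mu};b)$ as a sum over $\nu\subseteq\la$ of $[(bx_1/t)_{\nu}/(bx_1)_{\la}]\,P_{\la/\nu}(x_1)\,R_{\nu}(a\spec{\mu_{\ge2}};b)$, the terms with $l(\nu)>n-1$ dropping out since $R_{\nu}$ of an $(n-1)$-letter alphabet then vanishes. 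Invoking the inductive hypothesis to swap $\nu$ and $\mu_{\ge2}$ inside $R_{\nu}(a\spec{\mu_{\ge2}};b)$, and Proposition~\ref{PSP} to evaluate the principal specialisations $R_{\nu}(a\spec{0};b)$ and $R_{\mu_{\ge2}}(a\spec{0};b)$ that arise, converts this into an explicit double expression for the ratio $R_{\la}(a\spec{\mu};b)/R_{\la}(a\spec{0};b)$ as a sum over $\nu$ of elementary factors times $R_{\mu_{\ge2}}(a\spec{\nu};b)$.

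It then remains to show that this double expression is symmetric under $\la\leftrightarrow\mu$. After the principal-specialisation factors are written out via Proposition~\ref{PSP} and \eqref{Qat}, this reduces to a $q$-series transformation, which I expect to be a $c\to\infty$ limiting case of Rains' Sears transformation for skew Macdonald polynomials \eqref{RainsS2} — precisely the engine already used above to show the $R_{\la}(x;a,b)$ are symmetric (cf.\ the proof of Proposition~\ref{propRab}). Identifying the correct specialisation of \eqref{RainsS2} and matching the $q$-shifted factorials and powers of $t$ on the two sides is the step I expect to be the main obstacle.

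An alternative that avoids the transformation altogether is to use the $q$-difference equation \eqref{Dnbc}: evaluated at $X=a\spec{\mu}$ the operator $D_n(b,c)$ degenerates to a difference operator in $\mu$ — most of its terms vanish because $(tx_i-x_j)/(x_i-x_j)$ vanishes at a spectral point whenever $\mu_{i-1}=\mu_i$ — and the resulting recursion, together with Proposition~\ref{PSP} and the case $\mu=0$, determines $R_{\la}(a\spec{\mu};b)$; one then checks that the right-hand side of the claimed identity obeys the same recursion. In any case, since $R_{\la}(X;b)$ is a degeneration of Rains' abelian interpolation functions, the statement also follows by taking the $p\to0$ and $a\to0$ limits in the corresponding evaluation symmetry for those functions.
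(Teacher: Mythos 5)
Your inductive strategy stalls precisely at its central step, and that step is not a routine reduction. After you strip the letter $aq^{\mu_1}t^{n-1}$ via \eqref{R3}, apply the $(n-1)$-variable induction hypothesis and Proposition~\ref{PSP}, the ratio $R_{\la}(a\spec{\mu};b)/R_{\la}(a\spec{0};b)$ becomes a sum over $\nu\subseteq\la$ of elementary factors times the \emph{non-elementary} quantities $R_{\mu_{\geq 2}}(a\spec{\nu};b)$; the mirror computation with $\la\leftrightarrow\mu$ produces a sum over $\nu'\subseteq\mu$ involving $R_{\la_{\geq 2}}(a\spec{\nu'};b)$. Equating these two expressions is not a $q$-series identity among $q$-shifted factorials: the unknown evaluations of $R$ at spectral points remain inside both sums, and applying the induction hypothesis to them again merely swaps arguments back, so the induction makes no visible progress. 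There is no indication how the $c\to\infty$ Sears limit \eqref{RainsS2} (which transforms sums of products of \emph{skew} Macdonald polynomials with factorial weights) would absorb these evaluations, and you yourself flag this matching as the ``main obstacle''. As it stands the proposal is a plan with the decisive identity left unproven; the two fallbacks (degenerating the $q$-difference equation \eqref{Dnbc} at spectral points, or taking $p\to 0$, $a\to 0$ limits of an evaluation symmetry for Rains' abelian interpolation functions) are likewise only sketched, with no statement or verification of the limiting procedure.

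For comparison, the paper's proof avoids any induction on the alphabet: since both sides are rational in $b$, it suffices to prove the identity for $b=q^{1-m}$ with $m\geq\la_1,\mu_1$. At such $b$, by \eqref{MM} and \eqref{Rbf}, $R_{\la}(X;q^{1-m})$ is the ratio $M_{m^n-\la}(1/x_1,\dots,1/x_n;1/q,1/t)/M_{m^n}(1/x_1,\dots,1/x_n;1/q,1/t)$ of interpolation Macdonald polynomials; after the substitutions $\la\mapsto m^n-\la$, $\mu\mapsto m^n-\mu$ and an application of Okounkov's principal specialisation formula, the claim becomes exactly Okounkov's known evaluation symmetry $M_{\la}(a/\spec{\mu})/M_{\la}(a/\spec{0})=M_{\mu}(a/\spec{\la})/M_{\mu}(a/\spec{0})$. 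If you want to salvage your approach, the most promising of your three routes is the reduction to known results for interpolation objects (your third alternative is in this spirit), but it must be carried out explicitly rather than asserted.
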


\begin{proof} 
We may view the evaluation symmetry as a rational function identity in
$b$. Hence it suffices to give a proof for $b=q^{1-m}$ where $m$ runs
over all integers such that $\la_1,\mu_1\leq m$.
But (see \eqref{MM} and \eqref{Rbf})
\[
R_{\la}(X;q^{1-m})=
\frac{M_{m^n-\la}(1/x_1,\dots,1/x_n;1/q,1/t)}
{M_{m^n}(1/x_1,\dots,1/x_n;1/q,1/t)}
\]
and
\[
M_{m^n}(1/X;1/q,1/t)=\prod_{x\in X} x^m (x;1/q)_m
\]
so that we need to prove that
\[
\frac{M_{m^n-\la}(a\spec{\mu})}{M_{m^n-\la}(a\spec{0})}=
\frac{M_{m^n-\mu}(a\spec{\la})}{M_{m^n-\mu}(a\spec{0})}\,
\frac{(aqt^{n-1})_{\mu}}{(aqt^{n-1})_{\la}}\,
\frac{(aq^{1-m}t^{n-1})_{\la}}{(aq^{1-m}t^{n-1})_{\mu}}.
\]
Making the substitutions $\la\mapsto m^n-\la$, $\mu\mapsto m^n-\mu$
and $a\mapsto aq^{-m}t^{1-n}$ we get
\[
\frac{M_{\la}(a/\spec{\mu})}{M_{\la}(aq^{-m}t^{1-n}\spec{0})}=
\frac{M_{\mu}(a/\spec{\la})}{M_{\mu}(aq^{-m}t^{1-n}\spec{0})}\,
\frac{(q^mt^{n-1}/a)_{\mu}}{(q^mt^{n-1}/a)_{\la}}\,
\frac{(t^{n-1}/a)_{\la}}{(t^{n-1}/a)_{\mu}}\,q^{m(\abs{\la}-\abs{\mu}}.
\]
Finally, by the principal specialisation formula for the
interpolation Macdonald polynomials \cite{Okounkov97},
\[
\frac{M_{\la}(aq^{-m}t^{1-n}\spec{0})}
{M_{\la}(a/\spec{0})}=
\frac{(q^mt^{n-1}/a)_{\la}}{(t^{n-1}/a)_{\la}}\,q^{-m\abs{\la}}
\]
so that we end up with
\[
\frac{M_{\la}(a/\spec{\mu})}{M_{\la}(a/\spec{0})}=
\frac{M_{\mu}(a/\spec{\la})}{M_{\mu}(a/\spec{0})}.
\]
This is the known evaluation symmetry of the interpolation Macdonald
polynomials \cite[Section 2]{Okounkov97}.
\end{proof}

It is clear from \eqref{R3} that $R_{\la}(X;0)=P_{\la}(X)$
with on the right a Macdonald polynomial.
The Macdonald polynomials in turn generalise the Jack polynomials
$P^{(\alpha)}_{\la}(X)$, since
$P^{(\alpha)}_{\la}(X)=\lim_{q\to 1} P_{\la}(X;q,q^{1/\alpha})$.
Combining the last two equations it thus follows that
\[
P^{(\alpha)}_{\la}(X)=\lim_{q\to 1} R_{\la}(X;0;q,q^{1/\alpha}).
\]
Curiously, there is an alternative path from $R_{\la}(X;b)$ to the Jack
polynomials as follows. For $X$ an alphabet let
\[
\hat{X}:=\Bigl\{ \Bigl(\frac{x}{1-x}\Bigr) \,\Big|~x\in X\Bigr\}.
\]
\begin{proposition}\label{propjack}
We have
\[
P^{(\alpha)}_{\la}(\hat{X})=\lim_{q\to 1} R_{\la}(X;1;q,q^{1/\alpha}).
\]
\end{proposition}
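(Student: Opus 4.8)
The plan is to pass to the limit $q\to1$ with $t=q^{1/\alpha}$ directly in the branching rule \eqref{R3} specialised to $b=1$, to recognise the resulting recursion as the branching rule for Jack polynomials in the alphabet $\hat X$, and then to conclude by induction on $\abs{X}$.

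First I would record the limits that enter. With $t=q^{1/\alpha}$, a typical factor of the generalised $q$-shifted factorial $(z;q,t)_\la=\prod_{i=1}^{l(\la)}\prod_{j=0}^{\la_i-1}(1-zt^{1-i}q^{j})$ equals $1-zq^{(1-i)/\alpha+j}$, which tends to $1-z$ as $q\to1$; since the product has $\abs{\la}$ factors, $(z;q,q^{1/\alpha})_\la\to(1-z)^{\abs{\la}}$, and likewise $(z/t;q,q^{1/\alpha})_\mu\to(1-z)^{\abs{\mu}}$. On the symmetric-function side $P_\la(X;q,q^{1/\alpha})\to P^{(\alpha)}_\la(X)$, and correspondingly the skew polynomials $P_{\la/\mu}(z;q,q^{1/\alpha})\to z^{\abs{\la-\mu}}\psi^{(\alpha)}_{\la/\mu}=:P^{(\alpha)}_{\la/\mu}(z)$, where $\psi^{(\alpha)}_{\la/\mu}=\lim_{q\to1}\psi_{\la/\mu}(q,q^{1/\alpha})$ is the Jack analogue of $\psi_{\la/\mu}$ (a classical fact; convergence of the $\psi$'s is also visible directly from \eqref{psi}), vanishing unless $\mu\preccurlyeq\la$. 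Since the sum in \eqref{R3} is finite the limit may be taken termwise; writing $X=Y+z$ and using $\abs{\la}-\abs{\mu}=\abs{\la-\mu}$ to collapse the powers of $1-z$, the $b=1$ case of \eqref{R3} becomes
\[
\lim_{q\to1}R_\la(X;1;q,q^{1/\alpha})
=\sum_{\mu\preccurlyeq\la}\Bigl(\frac{z}{1-z}\Bigr)^{\abs{\la-\mu}}\psi^{(\alpha)}_{\la/\mu}\,\lim_{q\to1}R_\mu(Y;1;q,q^{1/\alpha}).
\]
The very induction that produces this identity also shows the left-hand limit exists, each branching coefficient being a rational function of $q$ regular at $q=1$.

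Now $\bigl(z/(1-z)\bigr)^{\abs{\la-\mu}}\psi^{(\alpha)}_{\la/\mu}=P^{(\alpha)}_{\la/\mu}(\hat z)$ with $\hat z=z/(1-z)$, so the displayed identity is exactly the single-variable Jack branching rule (the $q\to1$ limit of \eqref{bP}) adjoining the letter $\hat z$ to the alphabet $\hat Y$ — note that $X=Y+z$ corresponds to $\hat X=\hat Y+\hat z$. Together with $\lim_{q\to1}R_\la(\text{--}\,;1;q,q^{1/\alpha})=\delta_{\la,0}=P^{(\alpha)}_\la(\emptyset)$, this shows that $\{\lim_{q\to1}R_\la(X;1)\}$ and $\{P^{(\alpha)}_\la(\hat X)\}$ satisfy the same recursion and the same initial condition, so an induction on $n=\abs{X}$ gives $\lim_{q\to1}R_\la(X;1;q,q^{1/\alpha})=P^{(\alpha)}_\la(\hat X)$. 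The case $l(\la)>n$, where both sides vanish, is covered by the easy fact (an immediate induction from \eqref{n1}) that $R_\la(X;b)=0$ whenever $l(\la)>\abs{X}$, and the base case $n=1$ is the direct check $R_{(k)}(z;1)=z^k/(z;q)_k\to(z/(1-z))^{k}=P^{(\alpha)}_{(k)}(\hat z)$ from \eqref{n1}.

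I do not expect a genuine obstacle. The one delicate point is the interchange of the limit with the recursion, and this is harmless because the recursion terminates after $n$ steps and every branching coefficient is regular at $q=1$; indeed the whole statement is an identity of rational functions in the letters of $X$ (with each $x\neq1$ understood), with $q\to1$ taken term by term.
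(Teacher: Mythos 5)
Your proof is correct and follows essentially the same route as the paper: take the $q\to1$ limit termwise in the $b=1$ branching rule \eqref{R3} with $t=q^{1/\alpha}$, absorb the resulting factor $(1-z)^{\abs{\mu}-\abs{\la}}$ into the skew Jack polynomial by homogeneity to get $P^{(\alpha)}_{\la/\mu}\bigl(z/(1-z)\bigr)$, and conclude by comparing with the Jack branching rule via induction on $\abs{X}$. The extra care you take with regularity at $q=1$, the base case, and the vanishing when $l(\la)>\abs{X}$ only makes explicit what the paper leaves implicit.
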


\begin{proof}
Let $X=Y+z$ be a finite alphabet.

Replacing $(b,t)\mapsto (1,q^{1/\alpha})$ in \eqref{R3}
and taking the $q\to 1$ limit yields
\[
R^{(\alpha)}_{\la}(X)=\sum_{\mu} (1-z)^{\abs{\mu}-\abs{\la}}
P_{\la/\mu}^{(\alpha)}(z) R^{(\alpha)}_{\mu}(Y),
\]
where $P_{\la/\mu}^{(\alpha)}$ is a skew Jack polynomial and
\[
R^{(\alpha)}_{\la}(X):=\lim_{q\to 1} R_{\la}(X;1;q,q^{1/\alpha}).
\]
Using the homogeneity of $P_{\la/\mu}^{(\alpha)}$ the above can be 
rewritten as
\[
R^{(\alpha)}_{\la}(X)=\sum_{\mu} 
P_{\la/\mu}^{(\alpha)}\Bigl(\frac{z}{1-z}\Bigr) R^{(\alpha)}_{\mu}(Y).
\]
Comparing this with 
\[
P^{(\alpha)}_{\la}(X)=\sum_{\mu} 
P_{\la/\mu}^{(\alpha)}(z) P^{(\alpha)}_{\mu}(Y)
\]
the proposition follows.
\end{proof}

\section{Cauchy, Pieri and Gauss formulas for $R_{\la}(X;b)$}

Probably our most important new results for $R_{\la}(X;b)$
are generalisation of the skew Cauchy identity \eqref{skewC},
the Pieri formula \eqref{eqPieriII} 
and the $q$-Gauss formula \eqref{qGaussI}. 

Before we get to these result we first need a few more definitions.
First of all, in analogy with \eqref{norm}, we set
\begin{equation}\label{Rnorm}
\R_{\la}(X;b):=t^{n(\la)} \frac{R_{\la}(X;b)}{c'_{\la}}
\end{equation}
so that \eqref{R3} becomes
\[
\R_{\la}(X;b)=\sum_{\mu} \frac{(bz/t)_{\mu}}{(bz)_{\la}}\, 
\P_{\la/\mu}(z) \R_{\mu}(Y;b).
\]
Furthermore, we also define the skew functions $\R_{\la/\mu}(X;b)$ by
\begin{equation}\label{SkewR}
\R_{\la/\mu}(X;b):=\sum_{\nu} \frac{(bz/t)_{\nu}}{(bz)_{\la}}\, 
\P_{\la/\nu}(z) \R_{\mu/\nu}(Y;b)
\end{equation}
and 
\[
\R_{\la/\mu}(\text{--}\,;b)=\delta_{\la \mu}.
\]
In other words,
\[
\R_{\la/\mu}(X+Y;b)=\sum_{\nu} \R_{\la/\mu}(X;b) \R_{\mu/\nu}(Y;b)
\]
and $\R_{\la/\mu}(X;0)=\P_{\la/\mu}(X)$.

\begin{theorem}[Skew Cauchy-type identity]\label{thmskewRC}
Let $ab=cd$ and $X$ a finite alphabet. Then
\begin{multline}\label{skewRC}
\sum_{\la} \frac{(b/c)_{\la}}{(b/c)_{\nu}}\,
\Q_{\la/\nu}\biggl[\frac{a-c}{1-t}\biggr] \R_{\la/\mu}(X;b) \\
=\biggl(\: \prod_{x\in X} \frac{(cx,dx)_{\infty}}{(ax,bx)_{\infty}}
\biggr)
\sum_{\la} \frac{(b/c)_{\mu}}{(b/c)_{\la}}\,
\Q_{\mu/\la}\biggl[\frac{a-c}{1-t}\biggr] \R_{\nu/\la}(X;d).
\end{multline}
\end{theorem}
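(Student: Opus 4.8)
The plan is to prove the skew Cauchy-type identity \eqref{skewRC} by induction on the cardinality of the alphabet $X$, using the branching rule \eqref{SkewR} to strip off one letter at a time. This mirrors the way all the elementary properties of $R_{\la}(X;b)$ in the previous section were established, and reduces the identity to the base case $X=\varnothing$ together with a single-letter step.

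\textbf{Base case.} When $X=\varnothing$, both $\R_{\la/\mu}(\text{--};b)$ and $\R_{\nu/\la}(\text{--};d)$ collapse to $\delta$-symbols, and the infinite product is empty. The left side becomes $\frac{(b/c)_{\la}}{(b/c)_{\nu}}\big|_{\la=\mu}\Q_{\mu/\nu}[(a-c)/(1-t)]$, i.e. $\frac{(b/c)_{\mu}}{(b/c)_{\nu}}\Q_{\mu/\nu}[(a-c)/(1-t)]$, while the right side becomes $\frac{(b/c)_{\mu}}{(b/c)_{\nu}}\Q_{\mu/\nu}[(a-c)/(1-t)]$ (setting $\la=\nu$). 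These agree trivially, so there is nothing to prove for $n=0$; indeed one should probably take the one-letter case $X=\{z\}$ as the genuine base, where the identity becomes an explicit $q$-series identity in $z$ that should follow from a terminating ${}_2\phi_1$-type summation (a $q$-Gauss or $q$-Chu--Vandermonde evaluation, consistent with the appearance of \eqref{gauss} in the introduction) combined with the Pieri formula \eqref{eqPieriII} and the skew Cauchy identity \eqref{skewC} for Macdonald polynomials.

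\textbf{Inductive step.} Write $X=Y+z$ with $|Y|=n-1$. On the left side I substitute \eqref{SkewR} for $\R_{\la/\mu}(X;b)$, introducing an inner partition $\rho$ with $\rho\subseteq\la$ and $\rho\subseteq\mu$; this produces $\frac{(bz/t)_{\rho}}{(bz)_{\la}}\P_{\la/\rho}(z)$ times $\R_{\mu/\rho}(Y;b)$. I then want to carry out the $\la$-sum first. The $\la$-dependence is concentrated in $\frac{(b/c)_{\la}}{(bz)_{\la}}\Q_{\la/\nu}[(a-c)/(1-t)]\P_{\la/\rho}(z)$; summing this over $\la$ should be doable using a suitable specialisation of the skew Cauchy identity \eqref{skewC} (or its consequence \eqref{eqPieriII}), exploiting the identity $ab=cd$ to rewrite the $q$-shifted factorials — in particular the factor $\prod_{x}\frac{(cx)_\infty}{(ax)_\infty}$ that \eqref{skewC} produces is exactly what one needs to build up the single-letter contribution $\frac{(cz,dz)_\infty}{(az,bz)_\infty}$ to the product on the right. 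After this $\la$-summation one is left with sums over $\rho$ and the surviving partition from \eqref{skewC}; rearranging and re-applying \eqref{SkewR} (this time to reconstruct $\R_{\nu/\la}(X;d)$ from its $Y$-part and the letter $z$) should reproduce the right side, with the induction hypothesis applied to the alphabet $Y$ supplying the remaining product $\prod_{x\in Y}\frac{(cx,dx)_\infty}{(ax,bx)_\infty}$.

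\textbf{Main obstacle.} The delicate point is the bookkeeping of the $q$-shifted factorial prefactors $(b/c)_{\la}/(b/c)_{\nu}$, $(bz/t)_{\rho}/(bz)_{\la}$, and the constraint $ab=cd$, all at once: one must check that after the inner $\la$-summation the parameters entering the residual skew Macdonald sum still satisfy the balancing condition needed to invoke \eqref{skewC} a second time (or to match the induction hypothesis on $Y$). Getting these four parameters $a,b,c,d$ to track correctly through the single-letter extraction — and verifying that the extra scalar $(bz/t)_\rho$-type factors telescope against the factors produced by \eqref{skewC} rather than leaving a spurious remainder — is where the real work lies; everything else is a routine reindexing. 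A useful sanity check along the way is to specialise $\mu=\nu=0$, where \eqref{skewRC} should reduce to the non-skew Cauchy/Pieri statement for $\R_{\la}(X;b)$, and then further to $X=\{1\}$, where it must collapse to the classical $q$-Gauss sum \eqref{gauss}; if the parameter dictionary is right these specialisations will fall out immediately, and if it is wrong they will expose the error.
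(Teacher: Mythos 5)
Your overall skeleton---induction on $\abs{X}$ via the branching rule \eqref{SkewR}, with the trivial base case $X=\varnothing$---is exactly the strategy of the paper, and your reading of \eqref{SkewR} (the inner partition multiplying $\R_{\mu/\rho}(Y;b)$) is correct. The genuine gap is in the inductive step, at the one place you label ``should be doable''. After substituting \eqref{SkewR} and using \eqref{rep} to push the parameter $a$ into the letter, the inner $\la$-sum you must evaluate has the form
\[
\sum_{\la} \frac{(b/c)_{\la}}{(bz)_{\la}}\,
\Q_{\la/\nu}\biggl[\frac{1-c/a}{1-t}\biggr]\P_{\la/\eta}(az),
\]
i.e.\ it carries a \emph{ratio of two $\la$-dependent generalised $q$-shifted factorials}. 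Neither the skew Cauchy identity \eqref{skewC} nor the Pieri formula \eqref{eqPieriII} can evaluate such a sum: their summands contain no $(\cdot)_{\la}$ factors at all, so they sit at the level of the $q$-binomial theorem, whereas the sum above sits one level higher, at a skew $q$-Gauss summation. This is precisely why the paper first establishes Proposition~\ref{Propskew}, a summation with numerator and denominator factors $(a)_{\la}/(c)_{\la}$ attached to $\P_{\la/\mu}\Q_{\la/\nu}$, and that proposition is not a specialisation of \eqref{skewC}; it is deduced from Rains' four-parameter balanced Cauchy-type identity for skew Macdonald polynomials \eqref{Rains} by setting $(b,c)=(q^{-N},t)$, evaluating the normalisation $Z$ by the $q$-Pfaff--Saalsch\"utz sum, and letting $N\to\infty$. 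Without this input (or an equivalent one) the induction does not close, and the same objection applies to your proposed single-letter base case: for general $\mu,\nu$ it is not a consequence of \eqref{gauss} together with \eqref{skewC} and \eqref{eqPieriII}, but is again exactly the one-letter instance of Proposition~\ref{Propskew}.

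Once that proposition is granted, the remainder of your outline does match the paper's proof: the inner summation produces the single-letter factor $(cz,dz)_{\infty}/(az,bz)_{\infty}$ (here the condition $ab=cd$ enters through the parameter substitution $(a,b,c)\mapsto(b/c,c/a,bz)$ in Proposition~\ref{Propskew}), the induction hypothesis on $Y$ supplies the product over $Y$, and a final application of \eqref{SkewR} reassembles $\R_{\nu/\la}(X;d)$. So the verdict is: right architecture, but the key analytic ingredient---a skew $q$-Gauss-type summation beyond the ordinary skew Cauchy identity---is missing and is where the real work of the theorem lies.
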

Note that for $b=0$ the theorem simplifies to \eqref{skewC}.
We defer the proof of \eqref{skewRC} till the end of this section and
first list a number of corollaries.

\begin{corollary}[Pieri formula]\label{Pieri}
Let $ab=cd$ and $X$ a finite alphabet. Then
\begin{equation}\label{PieriR}
\R_{\mu}(X;d)\prod_{x\in X} 
\frac{(cx,dx)_{\infty}}{(ax,bx)_{\infty}} 
=\sum_{\la} \frac{(b/c)_{\la}}{(b/c)_{\mu}}\,
\Q_{\la/\mu}\biggl[\frac{a-c}{1-t}\biggr] \R_{\la}(X;b).
\end{equation}
\end{corollary}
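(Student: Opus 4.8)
The plan is to derive the Pieri formula in Corollary~\ref{Pieri} directly from the skew Cauchy-type identity of Theorem~\ref{thmskewRC} by specialising $\nu=0$. The guiding observation is that the left-hand side of \eqref{skewRC} collapses dramatically when $\nu=0$: since $\Q_{\la/0}=\Q_{\la}$ and, by \eqref{eqPieriII} together with the relation between $\Q_{\la/\mu}$ and the structure constants $\f^{\la}_{\mu\nu}$ recorded after \eqref{struc}, the quantity $\Q_{\la/0}[(a-c)/(1-t)]$ is essentially forced to be supported on $\la=0$. More precisely, from $\f^{\la}_{0\nu}=\delta_{\la\nu}$ one gets $\Q_{\la/0}[(a-c)/(1-t)]=a^{|\la|}\sum_{\sigma}(c/a)_{\sigma}\f^{\la}_{0\sigma}=a^{|\la|}(c/a)_{\la}$ — wait, I need to be careful here, because actually the relevant skew object here has the \emph{lower} index $0$, so $\Q_{\la/0}=\Q_\la$ and we are really asking for $\Q_\la[(a-c)/(1-t)]=(c/a)_\la a^{|\la|}$ via \eqref{Qat}. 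Hmm, that does not vanish. Let me reconsider: the correct specialisation is not $\nu=0$ on the left but rather reading the identity the other way.

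So the actual plan: set $\nu=0$ in \eqref{skewRC}. On the left-hand side, $(b/c)_\nu=(b/c)_0=1$, and $\Q_{\la/0}[(a-c)/(1-t)]=\Q_\la[(a-c)/(1-t)]$. By \eqref{Qat} (after the elementary manipulation $\phi_{a,c}(\Q_\la)=a^{|\la|}(c/a)_\la$, equivalently writing it as a single $q$-shifted factorial), this is nonzero for all $\la$, so the left side does \emph{not} collapse. Instead I should set $\mu=0$ so that $\R_{\la/0}(X;b)=\R_\la(X;b)$, and simultaneously exploit that the right-hand side sum over $\la$ with $\Q_{\mu/\la}=\Q_{0/\la}=\delta_{\la 0}$ collapses to the single term $\la=0$. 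That is the key: take $\mu=0$ in Theorem~\ref{thmskewRC}. Then the right-hand side becomes $\bigl(\prod_{x\in X}(cx,dx)_\infty/(ax,bx)_\infty\bigr)\cdot (b/c)_0/(b/c)_0\cdot\Q_{0/0}[\cdots]\cdot\R_{\nu/0}(X;d)$, and since $\Q_{0/0}=1$ and $\R_{\nu/0}(X;d)=\R_\nu(X;d)$, this is exactly the left-hand side of \eqref{PieriR} with $\mu$ there renamed to $\nu$. The left-hand side of \eqref{skewRC} with $\mu=0$ becomes $\sum_\la \frac{(b/c)_\la}{(b/c)_\nu}\Q_{\la/\nu}[(a-c)/(1-t)]\R_\la(X;b)$, which is exactly the right-hand side of \eqref{PieriR} after renaming $\nu\mapsto\mu$.

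Concretely the proof is then just: \emph{Put $\mu=0$ in Theorem~\ref{thmskewRC} and use $\R_{\la/0}=\R_\la$, $\Q_{0/\la}=\delta_{\la,0}$, $\Q_{0/0}=1$; then relabel $\nu$ as $\mu$.} I would write this out in two or three sentences, noting that the constraint $ab=cd$ is inherited unchanged, and that for $b=0$ one recovers \eqref{eqPieriII} (indeed \eqref{qbt}-type statements) as a consistency check, since $\R_{\la}(X;0)=\P_\la(X)$ and the factors $(b/c)_\la$ all become $1$.

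The main "obstacle" is not really an obstacle but rather making sure the bookkeeping of which index is skewed is correct — it is easy to flip $\la/\nu$ versus $\nu/\la$ and pick the wrong specialisation ($\nu=0$ versus $\mu=0$), which would lead nowhere. The safeguard is to check degrees/limits: setting $\mu=0$ must leave a genuine Pieri-type expansion (a product of infinite products times $\R_\mu$ expanded in the $\R_\la$ basis), whereas setting $\nu=0$ would leave an identity with skew $\R_{\la/\mu}$ still present on one side, which is not the stated corollary. Once the correct choice $\mu=0$ is identified, the derivation is immediate from the theorem and the normalisations already established, with no further computation required.
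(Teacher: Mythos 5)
Your final argument is exactly the paper's proof: the paper also simply takes $\mu=0$ in Theorem~\ref{thmskewRC}, so that $\Q_{0/\la}=\delta_{\la,0}$ collapses the right-hand sum to $\R_{\nu}(X;d)$ times the product, and then renames $\nu$ as $\mu$. The initial detour about $\nu=0$ is harmless since you correctly identify and justify the right specialisation before concluding.
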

This follows from the theorem by taking $\mu=0$ and then replacing
$\nu$ by $\mu$.

When $b,d\to 0$ equation \eqref{PieriR} yields the Pieri
formula \eqref{eqPieriII} for Macdonald polynomials.
When $b,c\to 0$ and $a\to 1$ such that $b/c=d$ equation~\eqref{PieriR}
yields (after replacing $d\mapsto b$)
\begin{equation}\label{snqbt}
\sum_{\la} \frac{(b)_{\la}}{(b)_{\mu}}\,
\qbin{\la}{\mu} \P_{\la}(X)=
\R_{\mu}(X;b)\prod_{x\in X} 
\frac{(bx)_{\infty}}{(x)_{\infty}}.
\end{equation}
For $\mu=0$ this is the $q$-binomial formula for Macdonald polynomials
\eqref{qbt}, and for $b=0$ it is Lassalle's \cite{Lassalle98}
\[
\sum_{\la} \qbin{\la}{\mu} \P_{\la}(X)=
\P_{\mu}(X)\prod_{x\in X} \frac{1}{(x)_{\infty}}.
\]

The Jack polynomial limit of \eqref{snqbt} is of particular interest.
To concisely state this we need some more notation.
Let
\[
\binom{\la}{\mu}^{\!\!(\alpha)}:=\lim_{q\to 1} 
\qbin{\la}{\mu}_{q,q^{1/\alpha}}
\]
or, alternatively \cite{Kaneko93,Lassalle90,OO97},
\[
\frac{P_{\la}^{(\alpha)}(x_1+1,\dots,x_n+1)}
{P_{\la}^{(\alpha)}(1^n)}=
\sum_{\mu} \binom{\la}{\mu}^{\!\!(\alpha)}
\frac{P_{\mu}^{(\alpha)}(x_1,\dots,x_n)}
{P_{\mu}^{(\alpha)}(1^n)},
\]
where $n$ is any integer such that $n\geq l(\la)$.
Further let
\[
(b;\alpha)_{\la}:=\prod_{i\geq 1} (b+(1-i)/\alpha))_{\la_i}
\]
with $(b)_k=b(b+1)\cdots(b+k-1)$,
\[
c'_{\la}(\alpha):=\prod_{s\in\la}(a(s)+1+l(s)/\alpha)
\]
and
\[
\P_{\la}^{(\alpha)}(X):=\frac{P_{\la}^{(\alpha)}(X)}{c'_{\la}(\alpha)}.
\]
Using all of the above, replacing $(b,q,t)$ in \eqref{snqbt} by
$(q^{\beta},q,q^{1/\alpha})$ and taking the (formal) limit $q\to 1$
with the aid of Proposition~\ref{propjack}, we arrive at the
following identity.

\begin{corollary}[Binomial formula for Jack polynomials]
For $X$ a finite alphabet 
\[
\sum_{\la} \frac{(\beta;\alpha)_{\la}}{(\beta;\alpha)_{\mu}} 
\binom{\la}{\mu}^{\!\!(\alpha)}
\P_{\la}^{(\alpha)}(X)=
\P_{\mu}^{(\alpha)}(\hat{X})
\prod_{x\in X} \frac{1}{(1-x)^{\beta}}.
\]
\end{corollary}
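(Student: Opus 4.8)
The plan is to obtain the identity as the $q\to1$ limit of the Macdonald-level formula \eqref{snqbt} under the substitution $(b,q,t)\mapsto(q^{\beta},q,q^{1/\alpha})$, the route already indicated in the text, with Proposition~\ref{propjack} supplying the limit of the right-hand side. Writing $q=\eup^{-\varepsilon}$, I would expand every ingredient of \eqref{snqbt} to leading order as $\varepsilon\to0^{+}$.

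The elementary pieces come first. From $1-q^{c+j}=\varepsilon(c+j)+O(\varepsilon^{2})$ one gets $(q^{c};q)_{k}=\varepsilon^{k}(c)_{k}\bigl(1+O(\varepsilon)\bigr)$ with $(c)_{k}=c(c+1)\cdots(c+k-1)$, so by \eqref{bla}
\[
(q^{\beta};q,q^{1/\alpha})_{\la}=\varepsilon^{\abs{\la}}(\beta;\alpha)_{\la}\bigl(1+O(\varepsilon)\bigr),
\]
and likewise $c'_{\la}(q,q^{1/\alpha})=\varepsilon^{\abs{\la}}c'_{\la}(\alpha)\bigl(1+O(\varepsilon)\bigr)$, while $t^{n(\la)}=q^{n(\la)/\alpha}\to1$. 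Hence from \eqref{norm} and $P^{(\alpha)}_{\la}(X)=\lim_{q\to1}P_{\la}(X;q,q^{1/\alpha})$,
\[
\P_{\la}(X;q,q^{1/\alpha})=\varepsilon^{-\abs{\la}}\P^{(\alpha)}_{\la}(X)\bigl(1+O(\varepsilon)\bigr).
\]
The generalised binomial coefficient satisfies $\qbin{\la}{\mu}_{q,q^{1/\alpha}}\to\binom{\la}{\mu}^{\!\!(\alpha)}$ by definition, and $\prod_{x\in X}(q^{\beta}x;q)_{\infty}/(x;q)_{\infty}\to\prod_{x\in X}(1-x)^{-\beta}$; the last limit is standard (for $\beta\in\Nat$ it is immediate from $(q^{\beta}x;q)_{\infty}/(x;q)_{\infty}=1/(x;q)_{\beta}\to(1-x)^{-\beta}$, and the general case follows by analyticity in $\beta$).

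For the right-hand side of \eqref{snqbt} I need $\lim_{q\to1}R_{\mu}(X;q^{\beta};q,q^{1/\alpha})$. Proposition~\ref{propjack} gives this for $\beta=0$, and its proof applies verbatim for arbitrary $\beta$: in the branching rule \eqref{R3} the only $\beta$-dependent factor is $(bz/t)_{\nu}/(bz)_{\la}$ with $b=q^{\beta}$, and since $q^{\beta}\to1$ this still has limit $(1-z)^{\abs{\nu}-\abs{\la}}$. Hence $\lim_{q\to1}R_{\mu}(X;q^{\beta};q,q^{1/\alpha})=P^{(\alpha)}_{\mu}(\hat{X})$, and combining with the expansions of $c'_{\mu}$ and $t^{n(\mu)}$ gives $\R_{\mu}(X;q^{\beta})=\varepsilon^{-\abs{\mu}}\P^{(\alpha)}_{\mu}(\hat{X})\bigl(1+O(\varepsilon)\bigr)$, with $\P^{(\alpha)}_{\mu}(\hat{X})=P^{(\alpha)}_{\mu}(\hat{X})/c'_{\mu}(\alpha)$.

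Finally I substitute these expansions into \eqref{snqbt}. Each term on the left carries $\varepsilon^{\abs{\la}-\abs{\mu}}$ from $(q^{\beta})_{\la}/(q^{\beta})_{\mu}$ times $\varepsilon^{-\abs{\la}}$ from $\P_{\la}$, i.e. an overall factor $\varepsilon^{-\abs{\mu}}$ independent of $\la$; the right-hand side likewise carries $\varepsilon^{-\abs{\mu}}$ from $\R_{\mu}$ (the finite product being $O(1)$). After cancelling the common factor $\varepsilon^{-\abs{\mu}}$ and letting $\varepsilon\to0$ one reads off exactly the stated identity. The one point that genuinely needs care — and the reason the text phrases this as a \emph{formal} limit — is the interchange of $\lim_{q\to1}$ with the infinite sum over $\la\supseteq\mu$: one either reads the identity in the ring of formal power series, where the limit is term-by-term, or supplies a dominated-/uniform-convergence argument near $X=0$. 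Everything else is bookkeeping of powers of $\varepsilon$ together with the classical product asymptotics quoted above.
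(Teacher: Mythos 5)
Your proposal is correct and follows essentially the same route as the paper, which obtains the corollary precisely by substituting $(b,q,t)\mapsto(q^{\beta},q,q^{1/\alpha})$ in \eqref{snqbt} and taking the formal $q\to1$ limit with Proposition~\ref{propjack}. Your extra bookkeeping (the $\varepsilon$-expansions of $(q^{\beta})_{\la}$, $c'_{\la}$ and the product, and the observation that the proof of Proposition~\ref{propjack} applies verbatim with $b=q^{\beta}$ rather than $b=1$) simply makes explicit the details the paper leaves implicit.
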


Another special case of \eqref{PieriR} worth stating
is the following multivariable extension of the $_1\phi_1$ summation
\cite[II.5]{GR04}, which follows straightforwardly by taking the
$a,d\to 0$ limit,
\begin{equation}\label{1p1}
\sum_{\la}
c^{\abs{\la-\mu}}\frac{(b/c)_{\la}}{(b/c)_{\mu}}\,
\Q_{\la/\mu}\biggl[\frac{0-1}{1-t}\biggr] \R_{\la}(X;b)=
\P_{\mu}(X)\prod_{x\in X} 
\frac{(cx)_{\infty}}{(bx)_{\infty}}.
\end{equation}
This provides an expansion of the right-hand side different from 
\eqref{eqPieriII}. 

If we let $\nu=0$ in Theorem~\ref{thmskewRC}, use \eqref{Qat} and then
replace $(a,b,c)\mapsto(c/ab,c,c/a)$ we obtain
\[
\sum_{\la} 
\Bigl(\frac{c}{ab}\Bigr)^{\abs{\la-\mu}} 
\frac{(a,b)_{\la}}{(a,b)_{\mu}} \, \R_{\la/\mu}(X;c)=
\prod_{x\in X} \frac{(cx/a,cx/b)_{\infty}}{(cx,cx/ab)_{\infty}}.
\]
For $\mu=0$ we state this separately.

\begin{corollary}[$\sn$ $q$-Gauss sum]\label{CorGauss}
For $X$ a finite alphabet
\begin{equation}\label{newGauss}
\sum_{\la} 
\Bigl(\frac{c}{ab}\Bigr)^{\abs{\la}} 
(a,b)_{\la} \R_{\la}(X;c)=
\prod_{x\in X} \frac{(cx/a,cx/b)_{\infty}}{(cx,cx/ab)_{\infty}}.
\end{equation}
\end{corollary}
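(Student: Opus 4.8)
The plan is to derive \eqref{newGauss} as the $\mu=\nu=0$ specialisation of the skew Cauchy-type identity \eqref{skewRC} of Theorem~\ref{thmskewRC}, which I take here as already proved (its proof being deferred to the end of the section). First I would put $\mu=\nu=0$ in \eqref{skewRC}; this turns $\Q_{\la/\nu}$ into $\Q_{\la}$, turns $\R_{\la/\mu}(X;b)$ into $\R_{\la}(X;b)$, and makes the prefactor $(b/c)_{\nu}$ equal to $1$, so the left-hand side becomes $\sum_{\la}(b/c)_{\la}\,\Q_{\la}\!\left[\frac{a-c}{1-t}\right]\R_{\la}(X;b)$. On the right-hand side the only partition contributing to the $\la$-sum is $\la=0$: the factor $\Q_{\mu/\la}=\Q_{0/\la}$ (equivalently $\R_{\nu/\la}=\R_{0/\la}$) vanishes unless $\la=0$, and at $\la=0$ each of $(b/c)_{0}$, $\Q_{0}$ and $\R_{0/0}(X;d)$ equals $1$. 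Hence \eqref{skewRC} reduces to
\[
\sum_{\la}(b/c)_{\la}\,\Q_{\la}\!\left[\frac{a-c}{1-t}\right]\R_{\la}(X;b)
=\prod_{x\in X}\frac{(cx,dx)_{\infty}}{(ax,bx)_{\infty}},
\qquad ab=cd ;
\]
keeping $\mu$ arbitrary and setting only $\nu=0$ would, by the same argument, give the $\mu$-general identity displayed just before \eqref{newGauss}.

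Next I would evaluate the coefficient in closed form. Since $\Q_{\la}$ is homogeneous of degree $\abs{\la}$ in the $\la$-ring sense, $\Q_{\la}[(a-c)/(1-t)]=a^{\abs{\la}}\,\Q_{\la}[(1-c/a)/(1-t)]=a^{\abs{\la}}(c/a)_{\la}$, the last step being the first identity of \eqref{Qat}; so the reduced identity reads $\sum_{\la}a^{\abs{\la}}(b/c)_{\la}(c/a)_{\la}\,\R_{\la}(X;b)=\prod_{x\in X}(cx,dx)_{\infty}/(ax,bx)_{\infty}$ with $d=ab/c$. Now I would apply the substitution $(a,b,c,d)\mapsto(c/ab,\,c,\,c/a,\,c/b)$, which respects $ab=cd$: it sends $(b/c)_{\la}\mapsto(a)_{\la}$, $a^{\abs{\la}}\mapsto(c/ab)^{\abs{\la}}$, $(c/a)_{\la}\mapsto(b)_{\la}$, $\R_{\la}(X;b)\mapsto\R_{\la}(X;c)$, and the right-hand product into $\prod_{x\in X}(cx/a,cx/b)_{\infty}/(cx,cx/ab)_{\infty}$; assembling these equalities yields exactly \eqref{newGauss}. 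This specialisation of the free parameters is legitimate because, term by term, the reduced identity is an identity of rational functions in $a,b,c$ subject only to $ab=cd$.

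The hard part will \emph{not} be this corollary: once Theorem~\ref{thmskewRC} is in hand the derivation is pure bookkeeping — verifying that the right-hand side of \eqref{skewRC} collapses to a single term, and carrying the $q$-shifted factorials through the homogeneity evaluation and the parameter change. All the genuine difficulty lies in Theorem~\ref{thmskewRC} itself, whose proof is handled separately.
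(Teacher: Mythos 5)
Your proposal is correct and follows essentially the same route as the paper: the paper also specialises Theorem~\ref{thmskewRC} at $\nu=0$ (then $\mu=0$), evaluates the coefficient via \eqref{Qat}, and applies the same substitution $(a,b,c)\mapsto(c/ab,c,c/a)$. The only cosmetic difference is that you set $\mu=\nu=0$ at once rather than keeping $\mu$ general until the last step, which you yourself note changes nothing.
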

As mentioned in the introduction, for $X=\{1\}$ this simplifies to the 
standard $q$-Gauss sum \eqref{gauss} thanks to \eqref{n1}. 
More generally, if we principally specialise 
$X=t^{1-n}\spec{0}=\{1,t^{-1},\dots,t^{1-n}\}$ and use \eqref{eqpsp2} the
$\sn$ $q$-Gauss sum simplifies to Kaneko's $q$-Gauss sum
for Macdonald polynomials \cite[Proposition 5.4]{Kaneko96}
\[
\sum_{\la} \Bigl(\frac{ct^{1-n}}{ab}\Bigr)^{\abs{\la}} 
\frac{(a,b)_{\la}}{(c)_{\la}}\, P_{\la}(\spec{0})
=\prod_{i=1}^n \frac{(ct^{1-i}/a,ct^{1-i}/b)_{\infty}}
{(ct^{1-i},ct^{1-i}/ab)_{\infty}}.
\]

As another consequence of the theorem we obtain an explicit
expression for the Taylor series of $\R_{\mu}(X;b)$ in $b$.
For $\mu\subseteq\la$ let $(1)_{\la/\mu}$ be defined as
$(1)_{\la/\mu}=\lim_{a\to 1} (a)_{\la}/(a)_{\mu}$. That is
\[
(1)_{\la/\mu}=\prod_{s\in\la-\mu}(1-q^{a'(s)}t^{-l'(s)}).
\]

\begin{corollary}\label{corbTaylor}
We have
\begin{equation}\label{bTaylor}
\R_{\mu}(X;b)=\sum_{\la\supseteq\mu}b^{\abs{\la-\mu}}
(1)_{\la/\mu}\qbin{\la}{\mu} \P_{\la}(X),
\end{equation}
or, equivalently,
\[
[b^r] \R_{\mu}(X;b)=\sum_{\substack{\la\supseteq\mu \\ \abs{\la-\mu}=r}}
(1)_{\la/\mu}\qbin{\la}{\mu} \P_{\la}(X).
\]
\end{corollary}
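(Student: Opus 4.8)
The plan is to deduce \eqref{bTaylor} from the Pieri formula \eqref{PieriR} of Corollary~\ref{Pieri} by two successive specialisations of its parameters; in effect \eqref{bTaylor} is the degenerate case of \eqref{PieriR} (equivalently of Theorem~\ref{thmskewRC}) in which both infinite products trivialise. I read \eqref{PieriR} as an identity in the free parameters $a,b,c$ with $d:=ab/c$, so that the constraint $ab=cd$ holds automatically; since every function occurring is a power series in $b$ with coefficients in $\Lambda_n$ ($n:=\abs{X}$), and only finitely many $\la$ contribute to a given total degree, all the limits below may be taken degree by degree.

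First I would let $c\to b$. Then $d=ab/c\to a$, the infinite product on the left collapses to $1$, and the left-hand side of \eqref{PieriR} becomes $\R_\mu(X;a)$. On the right-hand side $\Q_{\la/\mu}\bigl[(a-c)/(1-t)\bigr]\to\Q_{\la/\mu}\bigl[(a-b)/(1-t)\bigr]$, while the prefactor $(b/c)_\la/(b/c)_\mu$ tends to $(1)_{\la/\mu}=\lim_{u\to1}(u)_\la/(u)_\mu$; by the definition \eqref{bla} this limit is finite (for $\mu\neq0$ the only dangerous factor, the one coming from the corner cell $(1,1)$, occurs simultaneously in numerator and denominator) and equals precisely the product over $\la-\mu$ in the statement. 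This produces the connection formula
\[
\R_\mu(X;a)=\sum_{\la\supseteq\mu}(1)_{\la/\mu}\,
\Q_{\la/\mu}\biggl[\frac{a-b}{1-t}\biggr]\,\R_\la(X;b),
\]
whose right-hand side is, remarkably, independent of $b$.

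The second step is to let $b\to0$ in this connection formula. The left side is unchanged; on the right, $\R_\la(X;b)\to\R_\la(X;0)=\P_\la(X)$ by \eqref{R3}, and $\Q_{\la/\mu}\bigl[(a-b)/(1-t)\bigr]\to\Q_{\la/\mu}\bigl[a/(1-t)\bigr]$. Since $\Q_{\la/\mu}$ is homogeneous of degree $\abs{\la-\mu}$ in the power sums (by \eqref{hom}) and $\phi_{a,0}(p_r)=a^r\phi_{1,0}(p_r)$ by \eqref{phi}, this last quantity equals $a^{\abs{\la-\mu}}\Q_{\la/\mu}\bigl[1/(1-t)\bigr]=a^{\abs{\la-\mu}}\qbin{\la}{\mu}$ by \eqref{qbinlamu}. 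Relabelling $a\mapsto b$ gives \eqref{bTaylor}; the equivalent coefficient form then follows at once, the coefficient of $b^r$ being a finite sum because $\P_\la(X)=0$ for $l(\la)>n$.

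I expect the only real obstacle to be the bookkeeping in the first step: one must verify that the degenerate limit $c\to b$ of \eqref{PieriR} is harmless, i.e.\ that $(b/c)_\la/(b/c)_\mu$ genuinely converges termwise to $(1)_{\la/\mu}$ and that no spurious pole in $a$ or $b$ is introduced. Granting that, the corollary is a formal consequence of the Pieri formula \eqref{PieriR}, the homogeneity relations \eqref{hom}, and the identity $\R_\la(X;0)=\P_\la(X)$.
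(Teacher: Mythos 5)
Your argument is correct, and it reaches \eqref{bTaylor} by a route related to, but not identical with, the paper's. You degenerate the Pieri formula \eqref{PieriR} directly at $c\to b$ (hence $d\to a$), which kills both infinite products and yields the connection formula
\begin{equation*}
\R_{\mu}(X;a)=\sum_{\la\supseteq\mu}(1)_{\la/\mu}\,
\Q_{\la/\mu}\biggl[\frac{a-b}{1-t}\biggr]\R_{\la}(X;b),
\end{equation*}
and then set $b=0$, using $\R_{\la}(X;0)=\P_{\la}(X)$ and the homogeneity argument $\Q_{\la/\mu}[a/(1-t)]=a^{\abs{\la-\mu}}\qbin{\la}{\mu}$. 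The paper instead starts from a different degeneration of the same Pieri formula, namely \eqref{snqbt} (obtained by $b,c\to 0$, $a\to 1$ with $b/c=d$), combines it with the symmetry \eqref{absymm}, $\R_{\la}(aX;b)=(a/b)^{\abs{\la}}\R_{\la}(bX;a)$, coming from Lemma~\ref{Lembindep}, and only at the very end takes the limit $a\to 1$ (after restricting the sum to $\la\supseteq\mu$ so that $(a)_{\la}/(a)_{\mu}\to(1)_{\la/\mu}$ is unambiguous). So both proofs rest on Corollary~\ref{Pieri}, but yours bypasses Lemma~\ref{Lembindep} entirely and performs the $u=b/c\to 1$ limit first; as a byproduct you obtain the change-of-basis identity displayed above, which is strictly more general than \eqref{bTaylor} (its $b=0$ case) and is not stated in the paper. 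Your justifications of the limits are adequate: the sum in \eqref{PieriR} is effectively over $\la\supseteq\mu$ (since $\Q_{\la/\mu}=0$ otherwise), so $(b/c)_{\la}/(b/c)_{\mu}\to(1)_{\la/\mu}$ termwise, and since $\R_{\la}(X;b)$ has minimal $x$-degree $\abs{\la}$ only finitely many $\la$ contribute in each degree, so the limits may indeed be taken coefficientwise; the final specialisation $b=0$ is even regular, no limit needed.
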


\begin{proof}
Replacing $(c,X)\mapsto(a/b,bX)$ in Lemma~\ref{Lembindep} and
expressing the resulting identity in terms of the normalised
function $\R_{\la}(X;b)$ (see \eqref{Rnorm}) we get
\begin{equation}\label{absymm}
\R_{\la}(aX;b)=\Bigl(\frac{a}{b}\Bigr)^{\abs{\la}}\R_{\la}(bX;a).
\end{equation}
Combined with \eqref{snqbt} this implies that
\[
\R_{\mu}(aX;b)=\Bigl(\frac{a}{b}\Bigr)^{\abs{\mu}}
\biggl(\:\prod_{x\in X} \frac{(bx)_{\infty}}{(abx)_{\infty}}
\biggr)
\sum_{\la} b^{\abs{\la}}
\frac{(a)_{\la}}{(a)_{\mu}} \qbin{\la}{\mu} \P_{\la}(X).
\]
The summand vanishes unless $\mu\subseteq\la$ and so we may add
this as a restriction in the sum over $\la$. Then the limit
$a\to 1$ limit may be taken without causing ambiguities, and
the claim follows.
\end{proof}

Corollary~\ref{corbTaylor} implies the following simple expressions 
for $R_{\la}(X;b)$ when $t=q$ (Schur-like case) or $t=1$ (monomial-like case).
\begin{proposition}
Let $X=\{x_1,\dots,x_n\}$.
Then
\begin{subequations}
\begin{align}\label{PPtisq}
R_{\la}(X;b)&=\frac{1}{\Delta(X)} 
\det_{1\leq i,j\leq n}\biggl(\;
\frac{x_i^{\la_j+n-j}}{(bx_i)_{\la_j-j+1}} \biggr) && \text{if $t=q$} \\
\intertext{and}
\label{PPtis1}
R_{\la}(X;b)&=
\sum_{u^{+}=\la} \biggl(\:\prod_{i=1}^n 
\frac{x_i^{u_i}}{(bx_i)_{u_i}} \biggr) && \text{if $t=1$.}
\end{align}
\end{subequations}

\end{proposition}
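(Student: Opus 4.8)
The plan is to derive both formulas from the Taylor expansion \eqref{bTaylor} of Corollary~\ref{corbTaylor}, with Lemma~\ref{lemLas} supplying the two special evaluations of the generalised $q$-binomial coefficient. Reading \eqref{bTaylor} with $\mu$ replaced by $\la$ (and the summation index renamed $\nu$) and undoing the normalisations \eqref{Rnorm}, \eqref{norm} gives
\[
R_{\la}(X;b)=t^{-n(\la)}c'_{\la}\sum_{\nu\supseteq\la}b^{\abs{\nu-\la}}(1)_{\nu/\la}\,\qbin{\nu}{\la}\,t^{n(\nu)}\,\frac{P_{\nu}(X)}{c'_{\nu}},
\]
the sum running effectively over $\nu$ with at most $n:=\abs{X}$ parts. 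When $t=q$ the Macdonald polynomial $P_{\nu}(X)$ is the Schur function $s_{\nu}(X)$ and $\qbin{\nu}{\la}$ is the determinant \eqref{tisq}; when $t=1$ it is the monomial symmetric function $m_{\nu}(X)$ and $\qbin{\nu}{\la}$ is the $\Symm_n$-orbit sum \eqref{tis1}. Throughout put $\beta_j=\la_j+n-j$ and $\gamma_k=\nu_k+n-k$.

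For $t=q$ I would run the identity backwards. Expanding each entry of the determinant in \eqref{PPtisq} by the $q$-binomial theorem,
\[
\frac{x_i^{\beta_j}}{(bx_i;q)_{\la_j-j+1}}=\sum_{\gamma\ge\beta_j}\qbin{\gamma-n}{\gamma-\beta_j}\,b^{\gamma-\beta_j}\,x_i^{\gamma}
\]
(a terminating sum when $\la_j<j-1$), the entry factors as a function of $(i,\gamma)$ times a function of $(j,\gamma)$, so Andr\'eief's identity (discrete Cauchy--Binet) rewrites $\det_{i,j}$ of these entries as $\sum_{\gamma_1>\dots>\gamma_n\ge 0}\det_{i,k}(x_i^{\gamma_k})\det_{j,k}\bigl(\qbin{\gamma_k-n}{\gamma_k-\beta_j}b^{\gamma_k-\beta_j}\bigr)$; the terms with some $\gamma_p<\beta_p$ drop out because the second determinant then contains an all-zero $(n-p+1)\times p$ block, so the sum is really over partitions $\nu\supseteq\la$. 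Dividing by $\Delta(X)$ turns $\det_{i,k}(x_i^{\gamma_k})$ into $\Delta(X)\,s_{\nu}(X)$, and what is left is the determinant identity
\[
\det_{1\le j,k\le n}\Bigl(\qbin{\gamma_k-n}{\gamma_k-\beta_j}\Bigr)=(1)_{\nu/\la}\,\det_{1\le j,k\le n}\Bigl(\frac{1}{(q;q)_{\gamma_k-\beta_j}}\Bigr),
\]
which — together with the prefactor simplification coming from the principal specialisation formula \eqref{pps} — matches term by term against the coefficient of $s_{\nu}(X)$ in the displayed expansion of $R_{\la}(X;b)$. (Running forward from \eqref{bTaylor} and \eqref{tisq} produces the same identity to prove.)

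For $t=1$ the same scheme applies with \eqref{tis1} and $m_{\nu}(X)=\sum_{v^{+}=\nu}\prod_j x_j^{v_j}$ in place of \eqref{tisq} and $s_{\nu}$; now the two determinants degenerate to sums over $\Symm_n$-orbits, and the role of Andr\'eief's identity is played by the elementary observation that the weighted rearrangement sum $\sum_{w^{+}=\la,\ w\le v}\prod_j (q;q)_{v_j-w_j}^{-1}$ is unchanged under permuting the entries of the composition $v$ (apply the same permutation to $w$). Comparing, on the two sides of \eqref{PPtis1}, the coefficient of a monomial $\prod_j x_j^{v_j}$ — expanding $x^{w}/(bx;q)_{w}=\sum_{k\ge 0}\qbin{w+k-1}{k}b^{k}x^{w+k}$ on the right — then finishes the case. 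Alternatively \eqref{PPtis1} follows directly from the branching rule \eqref{R3}: sorting the monomials of $\sum_{w^{+}=\la}\prod_j x_j^{w_j}/(bx_j;q)_{w_j}$ by the exponent of the adjoined variable reproduces \eqref{R3}, once one uses $P_{\nu}(X;q,1)=m_{\nu}(X)$ to see that the one-variable skew $P_{\la/\mu}(z;q,1)$ equals $z^{\abs{\la-\mu}}$ when $\mu$ is obtained from $\la$ by deleting one part and is $0$ otherwise.

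The main obstacle is the determinant identity in the $t=q$ case — isolating the weight, verifying the vanishing of the out-of-range terms, and, above all, handling the $q$-shifted factorials of negative (``formally infinite'') index carefully so that the accumulated scalar prefactors cancel. The $t=1$ statement is essentially the diagonal shadow of that computation and, in any case, drops out quickly from the branching rule.
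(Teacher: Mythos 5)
Your proposal is correct and follows essentially the same route as the paper: expand via \eqref{bTaylor} and Lemma~\ref{lemLas}, expand each entry of the claimed determinant by the $q$-binomial theorem, and compare through Cauchy--Binet, the leftover determinant identity being just the routine row/column factor extraction (via \eqref{pps} and $(1)_{\nu/\la}\big|_{t=q}=\prod_k (q)_{\nu_k-k}/(q)_{\la_k-k}$) that the paper likewise leaves implicit. Concerning your flagged ``main obstacle'' of $q$-shifted factorials with negative index: the paper sidesteps it by first assuming $\la_i\geq i-1$ for all $i\leq n$ (so that $\nu_n\geq 0$) and then lifting this restriction with Lemma~\ref{Lemreduce}, which you could adopt instead of tracking negative indices directly.
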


\begin{proof}
Since the two claims are proved in almost identical fashion 
we only present a proof of \eqref{PPtisq}.
The only significant difference is that the omitted proof of 
\eqref{PPtis1} uses \eqref{tis1} instead of \eqref{tisq}.

Assume that $t=q$. Let $\nu=\la-(0,1,\ldots,n-1)$ and suppose that
$\nu_n\geq 0$. Since for any $k\geq 0$, one has the expansion  
\[
\frac{x^{k+n-1}}{(bx)_k}=
\sum_{r=0}^{\infty} x^{k+n-1+r} \qbin{k+r-1}{r},
\]
the matrix $(x_i^{\nu_j+n-1}/(bx_i)_{\nu_j})$
factorises into the product of rectangular matrices
\[
\big( x_i^{n-1+r} \big)_{\substack{i=1,\ldots,n \\ r=0,1,\ldots}}
\and
\bigg(b^{r-k} \qbin{k-1}{r-k} \bigg)_{\substack{r=0,1,\ldots \\ 
k=\nu_1,\ldots,\nu_n}}.
\]
According to Cauchy--Binet theorem, the determinant on the right-hand 
side of \eqref{PPtisq} factorises into a sum of products of minors of 
these two matrices.

On the other hand, by \eqref{alt} and \eqref{tisq}, the
expansion \eqref{bTaylor} gives
\[
R_{\la}(X;b)\Delta(X)   =
\sum_{\mu\supseteq\la} b^{\abs{\mu-\la}}(1)_{\mu/\la}
\det_{1\leq i,j\leq n}\biggl(
\frac{1}{(q)_{\mu_i-\la_j-i+j}}\biggr)
\det_{1\leq i,j\leq n}\bigl(x_i^{\mu_j+n-j}\bigr),
\]
where we have also used \eqref{pps} and \eqref{Rnorm}.
Using $\nu$ instead of $\la$, and $\eta=\mu-(0,1,\ldots,n-1)$,
this becomes 
\[
R_{\la}(X;b)\Delta(X)   =
\sum_{\eta\supset\nu} \det_{1\leq i,j\leq n}\biggl(
\qbin{\eta_i-1}{\nu_j-1} b^{\eta_i-\nu_j}\biggr)
\det_{1\leq i,j\leq n}\bigl(x_i^{\eta_j}\bigr) \ , 
\]
which is precisely the Cauchy--Binet expansion.
The restriction $\nu_n\geq 0$ is lifted using Lemma~\ref{Lemreduce}.
\end{proof}

Recall that the Macdonald polynomials are the eigenfunctions of the
operator $D_n^1$, see \eqref{MacOp2}. Because 
$P_{\la}(X;q,t)=P_{\la}(X;q^{-1},t^{-1})$ this can also be stated as
\[
\D_n^1 P_{\la}(X)=\omega_{\la} P_{\la}(X),
\]
where $\D_n^1:=D_n^1(q^{-1};t^{-1})$ and $\omega_{\la}$ is given 
in \eqref{omega}.

A second consequence of Corollary~\ref{corbTaylor} it a generalisation
of this identity as follows.
Let 
\[
A_i(x;t):=
\prod_{\substack{j=1 \\ j\neq i}}^n \frac{tx_i-x_j}{x_i-x_j}
\]
and
\[
\D_n^1(b):=\sum_{i=1}^n A_i(x;t^{-1})
\biggl(\Bigl(1-\frac{bx_i}{q}\Bigr)
T_{q^{-1},x_i}+\frac{bx_i}{q}\biggr),
\]
so that $\D_n^1(0)=\D_n^1$.

\begin{theorem}
We have
\[
\D_n^1(b) R_{\la}(X;b)=\omega_{\la} R_{\la}(X;b).
\]
\end{theorem}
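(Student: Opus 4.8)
The plan is to reduce the statement to a known eigenvalue relation for Macdonald polynomials by using the $b$-Taylor expansion of Corollary~\ref{corbTaylor} and a suitable factorisation of the operator $\D_n^1(b)$. First I would expand $R_{\la}(X;b)$ as $\sum_{\mu\supseteq\la} b^{\abs{\mu-\la}}(1)_{\mu/\la}\qbin{\mu}{\la}\P_{\mu}(X)\,c'_{\la}t^{-n(\la)}$ (combining \eqref{bTaylor} with \eqref{Rnorm}), so that proving the theorem amounts to an identity among Macdonald polynomials with coefficients that are rational in $q,t$ and polynomial in $b$. Since $(1)_{\mu/\la}$ vanishes unless $\mu=\la$ when $\abs{\mu-\la}=0$, the $b^0$ term of the claimed identity is exactly $\D_n^1 P_{\la}(X)=\omega_{\la}P_{\la}(X)$, which is \eqref{MacOp2} restated via $P_{\la}(X;q,t)=P_{\la}(X;q^{-1},t^{-1})$. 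So the content is in matching the higher powers of $b$.

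The cleaner route, which I would pursue first, is to avoid comparing Taylor coefficients termwise and instead conjugate $\D_n^1(b)$ into $\D_n^1$. Observe that $\D_n^1(b)=\sum_i A_i(x;t^{-1})\bigl((1-bx_i/q)T_{q^{-1},x_i}+bx_i/q\bigr)$, and that the Macdonald-type $q$-difference equation \eqref{Dnbc} for $D_n(b,c)$ already knows how $R_{\la}(X;b)$ interacts with shift operators dressed by factors $(1-bx_j)$ and $(c-bt^{1-n}x_i)$. Extracting the coefficient of $c^1$ from \eqref{Dnbc} — exactly as \eqref{Dn1}--\eqref{MacOp2} is the $c^1$-coefficient of \eqref{MacOp} — should produce a first-order operator identity of the form (first-order operator)$\,R_{\la}(X;b)=R_{\la}(X;bq)\sum_i q^{\la_i}t^{n-i}$, i.e.\ an eigenvalue equation but with $R_{\la}(X;bq)$ on the right. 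The remaining task is then to relate this $D_n^1$-type operator (built from $q,t$ and the $(1-bx_j)$, $(-bt^{1-n}x_i)$ factors, with plain $T_{q,x_i}$) to the inverted operator $\D_n^1(b)$ acting with $T_{q^{-1},x_i}$, and to turn $R_{\la}(X;bq)$ back into $R_{\la}(X;b)$. Here I would use the inversion $P_{\la}(X;q,t)=P_{\la}(X;q^{-1},t^{-1})$ together with Lemma~\ref{Lemreduce}-type manipulations relating $R_{\la}(\cdot;bq)$ and $R_{\la}(\cdot;b)$, and the reflection $(x,q,t)\mapsto(1/x,1/q,1/t)$ that was already used to pass between \eqref{MacOp} and $\D_n^1$; the factor $bx_i/q$ added to $(1-bx_i/q)T_{q^{-1},x_i}$ in $\D_n^1(b)$ is precisely what one expects from clearing the $q^{-1}$-shift of a denominator $(bx_i)_{\cdot}$, so the bookkeeping should close.

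If the operator-conjugation route proves stubborn, the fallback is the direct termwise comparison: substitute the expansion of $R_{\mu}(X;b)$ on both sides, use that $\D_n^1$ acts diagonally on each $\P_{\la}(X)$ with eigenvalue $\omega_{\la}$, and that the ``extra'' pieces of $\D_n^1(b)$ — namely $\sum_i A_i(x;t^{-1})\bigl(bx_i/q - (bx_i/q)T_{q^{-1},x_i}\bigr)$ — act on $\P_{\la}(X)$ by a combination of multiplication by $bx_i$ and a $q^{-1}$-shift, which by Pieri-type rules \eqref{Pierir} and the expansion of $x_i$-multiplication lower/raise $\la$ by single boxes; then one must check that, after collecting powers of $b$, the coefficient identities are exactly the recursion \eqref{rec} for $\qbin{\la}{\mu}$ together with the Lassalle relation $(1-q)t^{i-1}\qbin{\la}{\la_{(i)}}=(c'_{\la}/c'_{\la_{(i)}})\psi'_{\la/\la_{(i)}}$ stated just before Lemma~\ref{lemLas}. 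I expect the main obstacle to be exactly this last matching: the $q^{-1}$-shift in $\D_n^1(b)$ interacts with the factors $(bz/t)_{\nu}/(bz)_{\la}$ in the branching rule \eqref{R3} in a way that is not manifestly diagonal, and showing that the non-diagonal contributions telescope to give the single eigenvalue $\omega_{\la}$ — rather than a $b$-dependent eigenvalue — is where the combinatorial identity \eqref{rec} must be invoked with care. The factor $(1)_{\la/\mu}$, which enforces that $\la/\mu$ be a genuine skew shape with the right multiplicities, is what should make the cancellations work.
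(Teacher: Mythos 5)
Your primary (``cleaner'') route does not work as described. In \eqref{Dnbc} the $c$-dependence sits inside $\prod_{i\in I}(c-bt^{1-n}x_i)$, so the coefficient of $c^1$ receives contributions from every subset $I$ with $\abs{I}\geq 1$; unlike the passage from \eqref{Dnc} to \eqref{Dn1}, it is \emph{not} a first-order operator, and the resulting identity still has $R_{\la}(X;bq)$ on the right. There is no general device to convert that back into an eigenvalue equation with the same $b$ on both sides: Lemma~\ref{Lemreduce} relates $R_{\la}(X;b)$ to $R_{\mu}(X;bq)$ only for $l(\la)=n$ and with a \emph{different} partition $\mu=\la-(1^n)$, so the ``bookkeeping'' you hope will close does not.

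Your fallback is in outline the paper's actual proof: expand $\R_{\mu}(X;b)$ by Corollary~\ref{corbTaylor}, write $\D_n^1(b)=\D_n^1-bq^{-1}\E_n$ with $\E_n=\sum_i x_iA_i(x;t^{-1})(T_{q^{-1},x_i}-1)$, let $\D_n^1$ act diagonally, and telescope the remainder with the recursion \eqref{rec} (together with $(1)_{\la/\mu}(1)_{\mu/\nu}=(1)_{\la/\nu}$). But the crucial ingredient is missing from your sketch: you need the explicit action of $\E_n$ on $\P_{\la}$, namely
\[
\E_n\P_{\la}(X)=(1-q)\sum_{i=1}^n q^{-\la_i}t^{i-n}\,(1)_{\la^{(i)}/\la}\,
\qbin{\la^{(i)}}{\la}\,\P_{\la^{(i)}}(X),
\]
which the paper imports from Lassalle (Proposition 9 of \cite{Lassalle98}, in the $(q^{-1},t^{-1})$ form, combined with \eqref{qtinv}). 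Your plan to obtain this from the Pieri rules \eqref{Pierir} and ``expansion of $x_i$-multiplication'' does not go through directly: $\E_n$ is not multiplication by a symmetric function, its individual summands $x_iA_i(x;t^{-1})T_{q^{-1},x_i}$ do not preserve symmetry (only the full sum does), and in particular it acts purely as a single-box \emph{raising} operator, not ``lower/raise'' as you suggest. Without either citing Lassalle's formula or proving it, the coefficient matching you defer to \eqref{rec} cannot be carried out; with it, the remaining computation is exactly the paper's and is routine.
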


\begin{proof}
Define the operator $\E_n$ as
\[
\E_n:=
\sum_{i=1}^n x_i A_i(x;t^{-1}) \bigl(T_{q^{-1},x_i}-1\bigr).
\]
Combining the $(q,t)\mapsto (q^{-1},t^{-1})$ instance of 
\cite[Proposition 9]{Lassalle98} with \eqref{qtinv} gives
\[
\E_n\P_{\la}(X)=(1-q)\sum_{i=1}^n 
q^{-\la_i}t^{i-n}(1)_{\la^{(i)}/{\la}}
\qbin{\la^{(i)}}{\la} \P_{\la^{(i)}}(X),
\]
where
$\la^{(i)}:=(\la_1,\dots,\la_{i-1},\la_i+1,\la_{i+1},\dots,\la_i)$.

Since
\[
\D_n^1(b)=\D_n^1(0)-bq^{-1}\E_n
\]
this implies that
\[
\D_n^1(b)\P_{\la}(X)=\omega_{\la} \P_{\la}(X)
-b(1-q)\sum_{i=1}^n q^{-\la_i-1}t^{i-n}(1)_{\la^{(i)}/\la}
\qbin{\la^{(i)}}{\la} \P_{\la^{(i)}}(X).
\]
By Corollary~\ref{corbTaylor} we can now compute the action of 
$\D_n^1(b)$ on $\R_{\la}(X;b)$:
\begin{align*}
\D_n^1(b) \R_{\mu}(X;b)
&=\sum_{\la\supseteq\mu}b^{\abs{\la-\mu}}
(1)_{\la/\mu}\qbin{\la}{\mu} \D_n^1(b) \P_{\la}(X) \\
&=\sum_{\la\supseteq\mu}b^{\abs{\la-\mu}}
(1)_{\la/\mu}\qbin{\la}{\mu} 
\omega_{\la} \P_{\la}(X) \\
&\quad-(1-q) \sum_{i=1}^n \sum_{\la\supseteq\mu}
b^{\abs{\la-\mu}+1} q^{-\la_i-1}t^{i-n}(1)_{\la^{(i)}/\mu}
\qbin{\la^{(i)}}{\la} \qbin{\la}{\mu} \P_{\la^{(i)}}(X) \\
&=\sum_{\la\supseteq\mu}b^{\abs{\la-\mu}}
(1)_{\la/\mu}\qbin{\la}{\mu} 
\omega_{\la} \P_{\la}(X) \\
&\quad-(1-q) \sum_{i=1}^n \sum_{\la\supseteq\mu}
b^{\abs{\la-\mu}} q^{-\la_i}t^{i-n}(1)_{\la/\mu}
\qbin{\la}{\la_{(i)}} \qbin{\la_{(i)}}{\mu} \P_{\la}(X),
\end{align*}
where we have also used that $(1)_{\la/\mu}(1)_{\mu/\nu}=(1)_{\la/\nu}$ for
$\nu\subseteq\mu\subseteq\la$. 
Recalling the recursion \eqref{rec}, the sum over $i$ on the right
can be performed to give
\[
\D_n^1(b) \R_{\mu}(X;b)
=\omega_{\mu}
\sum_{\la\supseteq\mu} 
b^{\abs{\la-\mu}} (1)_{\la/\mu} \qbin{\la}{\mu} \P_{\la}(X).
\]
Again using \eqref{bTaylor} completes the proof.
\end{proof}

As a third and final application of 
Corollary~\ref{corbTaylor} we derive a simple
expression for $D_n(b,c)P_{\mu}(x)$, to be compared with
\eqref{MacOp} (obtained for $b=0$) or with \eqref{Dnbc}.

\begin{proposition}
We have
\[
D_n(b,c)P_{\mu}(x) =\sum_{\la'\succcurlyeq\mu'} (-b)^{\abs{\la-\mu}}
P_{\la}(x) \psi'_{\la/\mu} \prod_{\la_i=\mu_i} (1-cq^{\la_i}t^{n-i})
\prod_{\la_i\neq\mu_i} (1-q^{\mu_i}t^{1-i}).
\]
\end{proposition}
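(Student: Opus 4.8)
The plan is to reduce $D_n(b,c)$ to the ordinary Macdonald operator $D_n(c)=D_n(0,c)$ of \eqref{Dnc} by a conjugation identity, and then expand using the Pieri formula \eqref{eqPieriII} and the eigenvalue relation \eqref{MacOp}. \textbf{Step 1 (conjugation formula).} First I would show that, writing $\theta_b:=\prod_{j=1}^n(1-bx_j)$, $\beta:=bt^{1-n}/c$, and letting $\Psi_b$ denote multiplication by $\prod_{j=1}^n(\beta x_j;q)_\infty/(bx_j;q)_\infty$,
\[
D_n(b,c)=M_{\theta_b}\circ\Psi_b\circ D_n(c)\circ\Psi_b^{-1},
\]
with $M_{\theta_b}$ multiplication by $\theta_b$. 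This follows by factoring $\prod_{j\notin I}(1-bx_j)=\theta_b/\prod_{i\in I}(1-bx_i)$ out of the definition of $D_n(b,c)$ and observing that $(c-bt^{1-n}x_i)/(1-bx_i)=c\,\psi(x_i)/\psi(qx_i)$ with $\psi(x)=(\beta x;q)_\infty/(bx;q)_\infty$, so that the residual operator is exactly $\Psi_b D_n(c)\Psi_b^{-1}$; the infinite products are harmless since the composite sends polynomials to polynomials.

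\textbf{Step 2 (expansion).} Hence $D_n(b,c)\P_\mu(X)=\theta_b\cdot\Psi_b\bigl(D_n(c)(\Psi_b^{-1}\P_\mu(X))\bigr)$, where $\Psi_b^{-1}\P_\mu(X)=\P_\mu(X)\prod_{x\in X}(bx;q)_\infty/(\beta x;q)_\infty$. I would then apply in turn: \eqref{eqPieriII} to expand $\Psi_b^{-1}\P_\mu$ in the Macdonald basis (coefficients $\Q_{\la/\mu}[(\beta-b)/(1-t)]$); the eigenvalue relation \eqref{MacOp} for $D_n(c)$ (multiplying the $\P_\la$-term by $\prod_i(1-cq^{\la_i}t^{n-i})$); \eqref{eqPieriII} once more for $\Psi_b$ (coefficients $\Q_{\kappa/\la}[(b-\beta)/(1-t)]$); and the dual-Pieri instance $\P_\kappa(X)\prod_{x\in X}(1-bx)=\sum_\rho b^{\abs{\rho-\kappa}}\Q_{\rho/\kappa}[(q-1)/(1-t)]\P_\rho(X)$ recorded in Section~2 for $\theta_b$. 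This writes $D_n(b,c)\P_\mu(X)=\sum_\rho c_{\mu\rho}\P_\rho(X)$ with $c_{\mu\rho}$ an explicit sum over chains $\mu\subseteq\la\subseteq\kappa\subseteq\rho$.

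\textbf{Step 3 (matching).} By \eqref{Qpsip} and \eqref{skewQP} the proposed formula is equivalent to $D_n(b,c)\P_\mu(X)=\sum_\rho b^{\abs{\rho-\mu}}\Q_{\rho/\mu}[(q-1)/(1-t)]\bigl(\prod_{\rho_i=\mu_i}(1-cq^{\mu_i}t^{n-i})\prod_{\rho_i\ne\mu_i}(1-q^{\mu_i}t^{1-i})\bigr)\P_\rho(X)$, so it remains to identify $c_{\mu\rho}$ with this coefficient. The key observation is that were the Macdonald eigenvalue $\prod_i(1-cq^{\la_i}t^{n-i})$ absent, the inner sum over $\la$ would telescope by the skew addition formula $\Q_{\kappa/\mu}(Y+Y')=\sum_\la\Q_{\kappa/\la}(Y')\Q_{\la/\mu}(Y)$ applied with $Y=[(\beta-b)/(1-t)]$ and $Y'=[(b-\beta)/(1-t)]$, whose power sums cancel, leaving $c_{\mu\rho}=\prod_i(1-cq^{\mu_i}t^{n-i})\cdot b^{\abs{\rho-\mu}}\Q_{\rho/\mu}[(q-1)/(1-t)]$; the genuine $\la$-dependence of the eigenvalue is exactly what turns the uniform product into the split product above.

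\textbf{Main obstacle.} The difficult point is this last scalar identity among $\Q$-skew functions, the Macdonald eigenvalue, and the vertical-strip constraint implicit in the factors $\Q_{\cdot/\cdot}[(q-1)/(1-t)]$. I would attempt it either by treating both sides as polynomials in $c$ of degree $\le n$ and checking equality at the $n+1$ points $c=0$ and $c=q^{-\mu_i}t^{i-n}$, or, in the spirit of the surrounding text, by induction on $\abs{\rho-\mu}$ using the single-box recursion \eqref{rec}. An equivalent route avoiding Step 1 altogether is to combine the $q$-difference equation \eqref{Dnbc} with Corollary~\ref{corbTaylor}: since $\{\R_\mu(X;b)\}$ is unitriangular over the Macdonald basis, \eqref{Dnbc} determines $D_n(b,c)$ on all of $\Lambda_{n,\F}$, and substituting the conjectured formula into $D_n(b,c)\R_\mu(X;b)=\R_\mu(X;bq)\prod_i(1-cq^{\mu_i}t^{n-i})$ reduces the claim to the very same combinatorial identity.
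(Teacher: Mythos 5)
Your Steps 1 and 2 are sound: the gauge identity $D_n(b,c)=M_{\theta_b}\circ\Psi_b\circ D_n(c)\circ\Psi_b^{-1}$ does follow from $(c-bt^{1-n}x)/(1-bx)=c(1-\beta x)/(1-bx)$ with $\beta=bt^{1-n}/c$, the threefold expansion via \eqref{eqPieriII}, the eigenvalue relation \eqref{MacOp} and the $a=bq$ instance of \eqref{eqPieriII} is a legitimate formal-power-series computation (only finitely many $\la$ contribute in each $x$-degree), and your reformulation of the proposition through \eqref{Qpsip} and \eqref{skewQP} is correct. This conjugation set-up is genuinely different from the paper's argument.

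However, the proof is not complete: all of the content of the proposition is concentrated in the ``scalar identity'' of your Step 3 — the evaluation of the chain sum over $\mu\subseteq\la\subseteq\kappa\subseteq\rho$ with the $\la$-dependent eigenvalue inserted — and you do not prove it; you only name two strategies you ``would attempt''. Neither is an evident reduction. The interpolation-in-$c$ idea is problematic because the individual terms carry $\beta=bt^{1-n}/c$ and are singular at $c=0$ (so that point requires a limit, and the limiting statement is itself a nontrivial identity), while at $c=q^{-\mu_i}t^{i-n}$ the left-hand side has no obvious closed evaluation, so checking at those points is a restatement of the problem rather than a simplification; the induction via \eqref{rec} is likewise not sketched. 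By contrast, the paper closes exactly this step by rewriting the split product as $\frac{(cqt^{n-1})_{\mu}}{(cqt^{n-1})_{\la}}(1)_{\la/\mu}\prod_{i=1}^n(1-cq^{\la_i}t^{n-i})$ for vertical strips, inverting the unitriangular matrix $\qbin{\mu}{\nu}$, performing the intermediate sum with Rains' $q$-Pfaff--Saalsch\"utz formula \eqref{Saal}, and recognising the result, via Corollary~\ref{corbTaylor}, as the known $q$-difference equation \eqref{Dnbc}. Your final paragraph points at this same route (Corollary~\ref{corbTaylor} plus \eqref{Dnbc}) but concedes it leads back to ``the very same combinatorial identity''; the missing ingredient in both of your routes is precisely a summation of Pfaff--Saalsch\"utz type such as \eqref{Saal}, and without it the decisive identity — hence the proposition — remains unproved.
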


\begin{proof}
First we use \eqref{skewQP} and \eqref{Qpsip}, as well as the fact that
for $\la-\mu$ a vertical strip (i.e., $\la_i-\mu_i=0,1$)
\[
\prod_{\la_i=\mu_i} (1-cq^{\la_i}t^{n-i})
\prod_{\la_i\neq\mu_i} (1-q^{\mu_i}t^{1-i})
=\frac{(cqt^{n-1})_{\mu}}{(cqt^{n-1})_{\la}}\, (1)_{\la/\mu}
\prod_{i=1}^n  (1-cq^{\la_i}t^{n-i}),
\]
to put the proposition in the form
\[
D_n(b,c)\P_{\mu}(x)
=\biggl(\:\prod_{i=1}^n  (1-cq^{\la_i}t^{n-i})\biggr)
\sum_{\la\supseteq\mu} b^{\abs{\la-\mu}}
\frac{(cqt^{n-1})_{\mu}}{(cqt^{n-1})_{\la}}\,
(1)_{\la/\mu}
\Q_{\la/\mu}\biggl[\frac{q-1}{1-t}\biggr] \P_{\la}(x).
\]
Since $M_{\mu\nu}:=\qbin{\mu}{\nu}$ is lower-triangular
(with $M_{\mu\mu}=1$), it is invertible (for the explicit inverse see
\cite[page 540]{Okounkov97}).
Since $(1)_{\la/\mu}(1)_{\mu/\nu}=(1)_{\la/\nu}$ for
$\nu\subseteq\mu\subseteq\la$ the above equation is thus equivalent to  
\begin{multline}\label{DPP}
D_n(b,c)\sum_{\mu\supseteq\nu}b^{\abs{\mu-\nu}}
(1)_{\mu/\nu}
\qbin{\mu}{\nu} \P_{\mu}(x) \\
=\biggl(\:\prod_{i=1}^n  (1-cq^{\la_i}t^{n-i})\biggr)
\sum_{\la\supseteq\mu\supseteq\nu} b^{\abs{\la-\nu}}
\frac{(cqt^{n-1})_{\mu}}{(cqt^{n-1})_{\la}}\,(1)_{\la/\nu}
\qbin{\mu}{\nu} \Q_{\la/\mu}\biggl[\frac{q-1}{1-t}\biggr]
\P_{\la}(x)
\end{multline}
for fixed $\nu$.

Taking $(a,b,c,d)\mapsto (cq,c,0,1)$ in \cite[Corollary 4.9]{Rains05}
in Rains' generalised $q$-Pfaff--Saalsch\"utz sum \cite[Corollary 4.9]{Rains05}
\begin{equation}\label{Saal}
\sum_{\mu} \frac{(a)_{\mu}}{(c)_{\mu}}\,
\Q_{\la/\mu}\biggl[\frac{a-b}{1-t}\biggr]
\Q_{\mu/\nu}\biggl[\frac{b-c}{1-t}\biggr]=
\frac{(a)_{\nu}(b)_{\la}}{(b)_{\nu}(c)_{\la}}\,
\Q_{\la/\nu}\biggl[\frac{a-c}{1-t}\biggr]
\end{equation}
yields
\[
\sum_{\mu}
\frac{(cq)_{\mu}}{(cq)_{\nu}}\,
\qbin{\mu}{\nu} 
\Q_{\la/\mu}\biggl[\frac{q-1}{1-t}\biggr]
=q^{\abs{\la-\nu}} \frac{(c)_{\la}}{(c)_{\nu}}
\qbin{\la}{\nu}.
\]
This allows the sum over $\mu$ on the right of \eqref{DPP} to be
carried out, leading to
\begin{align*}
D_n(b,c)&\sum_{\mu\supseteq\nu}b^{\abs{\mu-\nu}}
(1)_{\mu/\nu} \qbin{\mu}{\nu} \P_{\mu}(x) \\
&=\biggl(\:\prod_{i=1}^n  (1-cq^{\la_i}t^{n-i})\biggr)
\sum_{\la\supseteq\nu} (bq)^{\abs{\la-\nu}}
\frac{(ct^{n-1})_{\la}(cqt^{n-1})_{\nu}}
{(ct^{n-1})_{\nu}(cqt^{n-1})_{\la}}\,(1)_{\la/\nu}
\qbin{\la}{\nu} \P_{\la}(x) \\
&=\biggl(\:\prod_{i=1}^n  (1-cq^{\nu_i}t^{n-i})\biggr)
\sum_{\la\supseteq\nu} (bq)^{\abs{\la-\nu}}
(1)_{\la/\nu} \qbin{\la}{\nu} \P_{\la}(x).
\end{align*}
By Corollary~\ref{corbTaylor} this is the same as 
\eqref{Dnbc}.
\end{proof}

\subsection{Proof of Theorem~\ref{thmskewRC}}
To prove the theorem we first prepare the following result.
\begin{proposition}\label{Propskew}
For $\mu,\nu$ partitions
\begin{multline*}
\sum_{\la} \frac{(a)_{\la}}{(c)_{\la}}\,
\P_{\la/\mu}\Bigl(\frac{c}{ab}\Bigr)
\Q_{\la/\nu}\biggl[\frac{1-b}{1-t}\biggr] \\
=\frac{(a)_{\mu}}{(c/t)_{\mu}}\, \frac{(a)_{\nu}}{(c/b)_{\nu}}\, 
\frac{(c/a,c/b)_{\infty}}{(c,c/ab)_{\infty}}
\sum_{\la} \frac{(c/bt)_{\la}}{(a)_{\la}}\,
\P_{\nu/\la}\Bigl(\frac{c}{ab}\Bigr)
\Q_{\mu/\la}\biggl[\frac{1-b}{1-t}\biggr].
\end{multline*}
\end{proposition}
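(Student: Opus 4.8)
The plan is to read Proposition~\ref{Propskew} as a hypergeometrically weighted refinement of the skew Cauchy identity \eqref{skewC}: the two sides have exactly the $\la$-over-$(\mu,\nu)$ summation shape of \eqref{skewC}, only decorated by the factors $(a)_\la/(c)_\la$ on the left and $(c/bt)_\la/(a)_\la$ on the right. A useful orientation is the case $\mu=\nu=0$. There $\Q_\la[(1-b)/(1-t)]=(b)_\la$ by \eqref{Qat}, $\P_{(k)}(c/ab)=(c/ab)^k/(q)_k$ (only single rows contribute, since $\P_\la$ of one letter vanishes unless $l(\la)\le1$), $(a)_{(k)}=(a;q)_k$, and the identity collapses to the classical $q$-Gauss sum \eqref{gauss}. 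This already fixes the product $\frac{(c/a,c/b)_\infty}{(c,c/ab)_\infty}$ on the right, so the genuine content is the partition-indexed lift of $q$-Gauss.

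First I would strip the single-letter argument: by homogeneity \eqref{hom}, $\P_{\la/\mu}(c/ab)=(c/ab)^{\abs{\la-\mu}}\P_{\la/\mu}(1)$, so only $\la$ with $\la/\mu$ a horizontal strip contribute on the left — in particular $l(\la)\le l(\mu)+1$, so the $\la$-sum is effectively finite in the number of rows — and likewise only $\mu/\la$ horizontal strips contribute on the right. I would then expand the remaining plethystic factor through the structure-constant formula $\Q_{\la/\nu}[(1-b)/(1-t)]=\sum_\rho (b)_\rho \f^\la_{\nu\rho}$ recorded just before \eqref{Qat}, recasting the claim as an identity among the generalized $q$-binomial coefficients $\qbin{\la}{\mu}$ and ordinary $q$-shifted factorials.

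The main step is then to produce it from Rains' family of transformations for skew Macdonald polynomials — the same source \cite[Corollary 4.9]{Rains05} that yields \eqref{RainsS2} and the generalized $q$-Pfaff--Saalsch\"utz sum \eqref{Saal}. Following the recipe already used in Section~3 to derive \eqref{Ok2} and Propositions~\ref{PropBCn} and \ref{propRab}, I would substitute the free parameters of the appropriate member of that family so that its two plethystic arguments degenerate exactly to $(c/ab)$ and $[(1-b)/(1-t)]$, and then take a confluent limit (sending one parameter to $0$ or $\infty$). Since both sides are rational in $a$, one may moreover reduce to $a=q^{-N}$, $N\in\Nat$, where the now finite $\la$-sum falls within the scope of terminating transformations. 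As a parallel check I would also try the direct route: insert the weights $(a)_\la/(c)_\la$ into the plain skew Cauchy identity \eqref{skewC} by resumming over an auxiliary index via \eqref{Saal}, so that the boundary factors $\frac{(a)_\mu}{(c/t)_\mu}\,\frac{(a)_\nu}{(c/b)_\nu}$ and the product $\frac{(c/a,c/b)_\infty}{(c,c/ab)_\infty}$ arise from evaluating the auxiliary sum by \eqref{gauss}.

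I expect the only genuinely delicate point to be the bookkeeping of $q$-shifted factorials under this limit/resummation: getting the boundary factorials $(c/t)_\mu$, $(c/b)_\nu$ and $(c/a,c/b)_\infty$, $(c,c/ab)_\infty$ to emerge with precisely the exponents displayed, and confirming that the hidden $t^{n(\cdot)}$, $c_\cdot$, $c'_\cdot$ normalizations distinguishing $\P$ from $\Q$ cancel. The specialization $\mu=\nu=0$ (which must reproduce \eqref{gauss}) and the trivial limit $c\to0$ (where both sides collapse to $(a)_\mu\,\Q_{\mu/\nu}[(1-b)/(1-t)]$) are the sanity checks I would use to fix signs and powers of $t$.
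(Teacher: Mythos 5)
Your orientation checks (the $\mu=\nu=0$ collapse to \eqref{gauss}, the horizontal-strip restriction coming from the single-letter argument) are fine, but the core of your argument rests on the wrong tool, and this is a genuine gap. You propose to obtain the identity from Rains' transformation \cite[Corollary 4.9]{Rains05}, i.e.\ from \eqref{RainsS2}/\eqref{Saal}, by specialising its plethystic arguments and taking confluent limits. That family of identities sums over the \emph{middle} partition, with both ends $\nu\subseteq\mu\subseteq\la$ fixed, and every member of it is a finite sum; Proposition~\ref{Propskew} instead sums over the \emph{outer} partition $\la$ containing both $\mu$ and $\nu$, and is a genuinely nonterminating, Cauchy/Gauss-level statement whose right-hand side carries the infinite products $(c/a,c/b)_{\infty}/(c,c/ab)_{\infty}$. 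No specialisation or confluence of a middle-sum transformation changes which index is summed, so the "main step" as described cannot produce the proposition. The paper's proof uses a structurally different input: Rains' Cauchy-type identity for skew Macdonald polynomials \cite[Corollary 3.8]{Rains08}, which is an outer-sum identity with a normalisation constant $Z$; one specialises $(b,c)=(q^{-N},t)$ there so that one plethystic argument degenerates to a single letter, evaluates $Z$ by the one-variable $q$-Pfaff--Saalsch\"utz sum, and then lets $N\to\infty$. That ingredient (or something equivalent, e.g.\ a complementation argument converting the terminating case into a middle sum, which you do not attempt) is exactly what is missing from your proposal.

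Your fallback route also fails as sketched. If you expand $\frac{(a)_{\la}}{(c)_{\la}}\,\Q_{\la/\nu}\bigl[\frac{1-b}{1-t}\bigr]$ via \eqref{Saal} (which forces the parameters $B=a$, $C=c$, $A=c/b$) and then try to evaluate the resulting inner $\la$-sum by the plain skew Cauchy identity \eqref{skewC}, the single letter becomes $1/a$ against the plethystic parameter $a$, so the product emerging from \eqref{skewC} contains the factor $(1)_{\infty}$: the intermediate $q$-binomial-type series sits exactly at its radius of convergence and the interchange of summations is not justified (formally it would make the left-hand side vanish). This reflects that the weight $(a)_{\la}/(c)_{\la}$ cannot be removed by a finite resummation against \eqref{skewC}; the identity lives at the "balanced nonterminating" level. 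Finally, your reduction to $a=q^{-N}$ "since both sides are rational in $a$" is not correct as stated: for generic $a$ the $\la$-sum is infinite (only $\la_1$ is bounded by the strip condition, not its range) and the right-hand side involves $(c/a)_{\infty}$, so neither side is rational in $a$; one would need an Ismail-type analyticity argument in $1/a$, or, as in the paper, prove the terminating identity first and obtain the general case as the $N\to\infty$ limit rather than the other way around.
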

Note that for $\nu=0$ this is
\[
\sum_{\la} \frac{(a,b)_{\la}}{(c)_{\la}}\,
\P_{\la/\mu}\Bigl(\frac{c}{ab}\Bigr)
=\frac{(c/a,c/b)_{\infty}}{(c,c/ab)_{\infty}}\,
\frac{(a,b)_{\mu}}{(c/t)_{\mu}},
\]
which, for $\mu=0$, simplifies to the $q$-Gauss sum \eqref{gauss}.

\begin{proof}[Proof of Proposition~\ref{Propskew}]
Key is the Cauchy-type identity for skew Macdonald polynomials
due to Rains \cite[Corollary 3.8]{Rains08}:
\begin{multline}\label{Rains}
\frac{1}{Z}
\sum_{\la} q^{\abs{\la}} \frac{(a,b)_{\la}}{(e,f)_{\la}}\,
\P_{\la/\mu}\biggl[\frac{1-c}{1-t}\biggr]
\Q_{\la/\nu}\biggl[\frac{1-d}{1-t}\biggr] \\
=\Bigl(\frac{q}{c}\Bigr)^{\abs{\mu}}\frac{(a,b)_{\mu}}{(e/c,f/c)_{\mu}}\,
\Bigl(\frac{q}{d}\Bigr)^{\abs{\nu}} \frac{(a,b)_{\nu}}{(e/d,f/d)_{\nu}} 
\qquad\qquad\qquad\qquad\qquad\\
\times \sum_{\la} \Bigl(\frac{cd}{q}\Bigr)^{\abs{\la}}
\frac{(e/cd,f/cd)_{\la}}{(a,b)_{\la}}\,
\P_{\nu/\la}\biggl[\frac{1-c}{1-t}\biggr]
\Q_{\mu/\la}\biggl[\frac{1-d}{1-t}\biggr],
\end{multline}
provided that the sum on the left terminates and 
the balancing condition $abcdq=eft$ holds.
The prefactor $Z$ refers to the sum on the left for $\mu=\nu=0$, i.e., to
\begin{align*}
Z&=\sum_{\la} q^{\abs{\la}} \frac{(a,b)_{\la}}{(e,f)_{\la}}\,
\P_{\la}\biggl[\frac{1-c}{1-t}\biggr]
\Q_{\la}\biggl[\frac{1-d}{1-t}\biggr] \\
&=\sum_{\la} q^{\abs{\la}} \frac{(a,b,c,d)_{\la}}{(e,f)_{\la}}\,
\P_{\la}\biggl[\frac{1}{1-t}\biggr].
\end{align*}

Since $(t)_{\la}$ vanishes if $l(\la)>1$, and since
\[
(t)_{(k)} \P_{(k)}\biggl[\frac{1}{1-t}\biggr]=\frac{1}{(q)_k}
\]
if follows that for $(b,c)=(q^{-N},t)$
\[
Z=\sum_{k=0}^N \frac{(a,d,q^{-N})_k\, q^k}{(q,e,adq^{1-N}/e)_k}
=\frac{(e/a,e/d)_N}{(e,e/ad)_N},
\]
where the second equality follows from the $q$-Pfaff--Saalsch\"utz sum 
\cite[Equation (II.12)]{GR04}.

To make the same $(b,c)=(q^{-N},t)$ specialisation in the right-hand side
of \eqref{Rains} we first replace the sum over $\la$ by $\la\subseteq
\nu$ (using the fact that $P_{\nu/\la}(X)=0$ if
$\la\not\subseteq\nu$). Then
\[
\frac{(b)_{\nu}}{(b)_{\la}}=\prod_{s\in\nu-\la}(1-bq^{a'(s)}t^{-l'(s)})
\]
is well-defined for $b=q^{-N}$. It thus follows that for for
$(b,c)=(q^{-N},t)$ \eqref{Rains} simplifies to
\begin{multline*}
\sum_{\la} q^{\abs{\la}} \frac{(a,q^{-N})_{\la}}{(e,f)_{\la}}\,
\P_{\la/\mu}(1)
\Q_{\la/\nu}\biggl[\frac{1-d}{1-t}\biggr] \\
=\Bigl(\frac{q}{t}\Bigr)^{\abs{\mu}}\frac{(a,q^{-N})_{\mu}}{(e/t,f/t)_{\mu}}\,
\Bigl(\frac{q}{d}\Bigr)^{\abs{\nu}}\frac{(a,q^{-N})_{\nu}}{(e/d,f/d)_{\nu}}\, 
\frac{(e/a,e/d)_N}{(e,e/ad)_N}
\qquad\qquad\qquad\qquad\qquad\\
\times \sum_{\la\subseteq\nu} \Bigl(\frac{dt}{q}\Bigr)^{\abs{\la}}
\frac{(e/dt,f/dt)_{\la}}{(a,q^{-N})_{\la}}\,
\P_{\nu/\la}(1)
\Q_{\mu/\la}\biggl[\frac{1-d}{1-t}\biggr]
\end{multline*}
for $adq^{1-N}=ef$.
Eliminating $f$, taking the limit $N\to\infty$ and using
\eqref{hom} this results in the claim with $(b,c)\mapsto(d,e)$.
\end{proof}

We are now prepared to prove Theorem~\ref{thmskewRC}.
\begin{proof}
We proceed by induction on $n:=\abs{X}$.
For $n=0$ we get the tautology
\[
\frac{(b/c)_{\mu}}{(b/c)_{\nu}}\,
\Q_{\mu/\nu}\biggl[\frac{a-c}{1-t}\biggr]=
\frac{(b/c)_{\mu}}{(b/c)_{\nu}}\,
\Q_{\mu/\nu}\biggl[\frac{a-c}{1-t}\biggr].
\]
For $n\geq 1$ we compute the sum on the right-hand side of 
\eqref{skewRC}, assuming the formula is true for the alphabet $Y$.
(Recall that $X=Y+z$.)
Using the branching rule \eqref{SkewR} and
\begin{equation}\label{rep}
\Q_{\la/\nu}\biggl[\frac{a-c}{1-t}\biggr] 
\P_{\la/\eta}(z) =
a^{\abs{\eta-\nu}}
\Q_{\la/\nu}\biggl[\frac{1-c/a}{1-t}\biggr] 
\P_{\la/\eta}(az)
\end{equation}
we obtain
\begin{align*}
S_{\mu,\nu}(X;a,b,c)&:=
\sum_{\la} \frac{(b/c)_{\la}}{(b/c)_{\nu}}\,
\Q_{\la/\nu}\biggl[\frac{a-c}{1-t}\biggr] \R_{\la/\mu}(X;b) \\
&\hphantom{:}=\sum_{\la,\eta} a^{\abs{\eta-\nu}}
\frac{(b/c)_{\la}(bz/t)_{\eta}}{(b/c)_{\nu}(bz)_{\la}}\,
\Q_{\la/\nu}\biggl[\frac{1-c/a}{1-t}\biggr]
\P_{\la/\eta}(az) \R_{\eta/\mu}(Y;b).
\end{align*}
The sum over $\la$ can be transformed by 
Proposition~\ref{Propskew} with $(a,b,c)\mapsto(b/c,c/a,bz)$
and $\mu\mapsto\eta$. As a result
\begin{align*}
S_{\mu,\nu}(X;a,b,c)&=
\frac{(cz,dz)_{\infty}}{(az,bz)_{\infty}} 
\sum_{\la,\eta} 
\frac{(b/c)_{\eta}(dz/t)_{\la}}{(b/c)_{\la}(dz)_{\nu}}\,
\Q_{\eta/\la}\biggl[\frac{a-c}{1-t}\biggr] 
\P_{\nu/\la}(z) \R_{\eta/\mu}(Y;b)  \\
&=\frac{(cz,dz)_{\infty}}{(az,bz)_{\infty}} 
\sum_{\la} \frac{(dz/t)_{\la}}{(dz)_{\nu}}\,
\P_{\nu/\la}(z) S_{\mu,\la}(Y,a,b,c),\
\end{align*}
where once again we have used \eqref{rep}, and where $d=ab/c$.
By the induction hypothesis, $S_{\mu,\la}(Y,a,b,c)$ may be replaced 
by the right-hand side of \eqref{skewRC} with 
$(X,\la,\nu)\mapsto(Y,\eta,\la)$, leading to
\[
S_{\mu,\nu}(X;a,b,c)=
\prod_{x\in X} \frac{(cx,dx)_{\infty}}{(ax,bx)_{\infty}} 
\sum_{\la,\eta} 
\frac{(b/c)_{\mu}(dz/t)_{\la}}{(b/c)_{\eta}(dz)_{\nu}}\,
\Q_{\mu/\eta}\biggl[\frac{a-c}{1-t}\biggr] 
\P_{\nu/\la}(z) \R_{\la/\eta}(Y;d).
\]
One final application of \eqref{SkewR} results in 
\[
S_{\mu,\nu}(X;a,b,c)=
\prod_{x\in X} \frac{(cx,dx)_{\infty}}{(ax,bx)_{\infty}} 
\sum_{\eta} 
\frac{(b/c)_{\mu}}{(b/c)_{\eta}}\,
\Q_{\mu/\eta}\biggl[\frac{a-c}{1-t}\biggr] 
\R_{\nu/\eta}(X;d),
\]
which is the right-hand side of \eqref{skewRC} with $\la\mapsto\eta$.
\end{proof}

\section{Transformation formulas for $\sn$ basic hypergeometric series}

In this final section we prove a number of additional identities for
basic hypergeometric series involving the function $\R_{\la}(X;b)$.
For easy comparison with known results for one-variable basic
hypergeometric series we define
\begin{multline*}
{_r\Phi_s}\biggl[\genfrac{}{}{0pt}{}{a_1,\dots,a_r}
{b_1,\dots,b_s};z;X\biggr] \\
:=\sum_{\la} \frac{(a_1,\dots,a_r)_{\la}}{(b_1,\dots,b_{s-1})_{\la}}\,
\Bigl((-1)^{\abs{\la}}q^{n(\la')}t^{-n(\la)}\Bigr)^{s-r+1}\,
z^{\abs{\la}} \R_{\la}(X;b_s),
\end{multline*}
where $X$ is a finite alphabet. 
There is some redundancy in the above definition since by Lemma~\ref{Lembindep}
$z^{\abs{\la}} \R_{\la}(X;b_s)=\R_{\la}(zX;b_s/z)$.
Since $\R_{\la}(X;b)$ vanishes if $l(\la)>\abs{X}$, and recalling 
$\R_{(k)}(z;b)=z^k/(bz)_k$ (see \eqref{n1}), it follows that
\begin{align*}
{_r\Phi_s}\biggl[\genfrac{}{}{0pt}{}{a_1,\dots,a_r}
{b_1,\dots,b_s};z;\{1\}\biggr]
&=\sum_{k=0}^{\infty} \frac{(a_1,\dots,a_r)_k}{(b_1,\dots,b_s)_k}\,
\Bigl((-1)^k q^{\binom{k}{2}}\Bigr)^{s-r+1}\, z^k \\
&={_r\phi_s}\biggl[\genfrac{}{}{0pt}{}{a_1,\dots,a_r}
{b_1,\dots,b_s};z\biggr],
\end{align*}
with on the right the standard notation for one-variable basic
hypergeometric series (with dependence on the base $q$ suppressed).
The reader is warned that only in this degenerate case do 
the ``lower-parameters'' $b_1,\dots,b_s$ enjoy full $\Symm_s$ symmetry.
Using the above notation the $q$-Gauss sum \eqref{newGauss} may
be stated as
\begin{equation}\label{newGauss3}
{_2\Phi_1}\biggl[\genfrac{}{}{0pt}{}{a,b}
{c};\frac{c}{ab};X\biggr]
=\prod_{x\in X} \frac{(cx/a,cx/b)_{\infty}}{(cx,cx/ab)_{\infty}}.
\end{equation}
To generalise this result we first prove the following 
transformation formula.
\begin{theorem}\label{GthmKTW}
For $f=de/bc$ and $\mu$ a partition, 
\begin{multline*}
\sum_{\la} \frac{(a,b)_{\la}}{(d)_{\la}} \,
\Q_{\la/\mu}\biggl[\frac{f/a-cf/a}{1-t}\biggr] \R_{\la}(X;e) \\
=
\frac{(b)_{\mu}}{(d/c)_{\mu}}
\biggl(\: \prod_{x\in X} \frac{(fx,ex/a)_{\infty}}{(ex,fx/a)_{\infty}} \biggr)
\sum_{\la} \frac{(a,d/c)_{\la}}{(d)_{\la}}\,
\Q_{\la/\mu}\biggl[\frac{e/a-cf/a}{1-t}\biggr] \R_{\la}(X;f).
\end{multline*}
\end{theorem}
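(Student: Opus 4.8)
The plan is to strip the function $\R_{\la}(X;f)$ off the right-hand side by means of the Pieri formula and then to match coefficients. By Corollary~\ref{corbTaylor} each $\R_{\kappa}(X;e)$ expands in the Macdonald polynomials $\P_{\kappa}(X)$ with unitriangular transition matrix, so for any finite $X$ the family $\{\R_{\kappa}(X;e)\}$ is linearly independent; since the coefficient identity obtained below is free of $X$, proving it for all $\kappa,\mu$ establishes the theorem for every $X$.

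Concretely, I would apply Corollary~\ref{Pieri} with its four parameters specialised to $(f/a,\,e,\,e/a,\,f)$ in the roles of $(a,b,c,d)$ there (the balancing holds since $(f/a)e=(e/a)f$), which yields
\[
\R_{\la}(X;f)\prod_{x\in X}\frac{(fx,ex/a)_{\infty}}{(ex,fx/a)_{\infty}}
=\sum_{\kappa}\frac{(a)_{\kappa}}{(a)_{\la}}\,
\Q_{\kappa/\la}\!\left[\frac{f/a-e/a}{1-t}\right]\R_{\kappa}(X;e).
\]
Substituting this into the right side of the theorem, interchanging the sums over $\la$ and $\kappa$, equating the coefficients of $\R_{\kappa}(X;e)$, cancelling the common factor $(a)_{\kappa}$, and finally clearing the remaining powers of $a$ via the homogeneity \eqref{hom} in the form $\Q_{\sigma/\tau}[\frac{u/a-v/a}{1-t}]=a^{-\abs{\sigma-\tau}}\Q_{\sigma/\tau}[\frac{u-v}{1-t}]$, one reduces Theorem~\ref{GthmKTW} to the $X$-free summation identity
\[
\sum_{\la}\frac{(d/c)_{\la}}{(d)_{\la}}\,
\Q_{\kappa/\la}\!\left[\frac{f-e}{1-t}\right]
\Q_{\la/\mu}\!\left[\frac{e-cf}{1-t}\right]
=\frac{(b)_{\kappa}\,(d/c)_{\mu}}{(d)_{\kappa}\,(b)_{\mu}}\,
\Q_{\kappa/\mu}\!\left[\frac{f-cf}{1-t}\right],\qquad f=\frac{de}{bc}.
\]

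This identity is a one-parameter refinement of Rains' $q$-Pfaff--Saalsch\"utz sum \eqref{Saal} --- it degenerates to \eqref{Saal} when $b=e$ --- and establishing it is the heart of the matter, and where I expect the main difficulty to lie. One route is to rewrite the skew $\Q$'s as normalised skew $\P$'s by \eqref{skewQP} and recognise the identity as a specialisation of Rains' Sears transformation \eqref{RainsS2}; the subtlety is that the naive ``degenerating'' choices of parameters there collapse both sides simultaneously into a tautology, so one must locate the correct, less transparent specialisation. A more self-contained route is to prove the displayed identity directly by induction on $\abs{\kappa-\mu}$, using a single-box recursion (in the spirit of \eqref{rec}) to peel off a box of $\kappa$ and checking that the resulting telescoping sum reproduces the right-hand side, the relation $f=de/bc$ being exactly what makes the cancellations close.

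An alternative to the whole argument, parallel to the proof of Theorem~\ref{thmskewRC}, is induction on $n=\abs X$: with $X=Y+z$, expand $\R_{\la}(X;e)$ by the normalised branching rule, interchange the summations so that the inner sum runs over the large partition, evaluate that inner single-letter sum by a specialisation of Rains' Cauchy identity \eqref{Rains} (as was done for Proposition~\ref{Propskew}), and identify the outcome with the branching expansion over $Y$ of the right-hand side of the theorem, whereupon the induction hypothesis closes the loop. Here the obstacle is the same in spirit --- pinning down the one-variable transformation that powers the inductive step --- together with careful tracking of the three telescoping arguments $f/a-e/a$, $e/a-cf/a$ and $f/a-cf/a$.
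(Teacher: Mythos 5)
Your skeleton is exactly the paper's proof: expand $\R_{\mu}(X;f)\prod_{x}(fx,ex/a)_{\infty}/(ex,fx/a)_{\infty}$ on the right by Corollary~\ref{Pieri} (your parameter choice is the same as the paper's), substitute, and reduce the theorem to a sum over the intermediate partition. The gap is in what you do with that reduced identity. You declare it ``a one-parameter refinement of Rains' $q$-Pfaff--Saalsch\"utz sum \eqref{Saal}'', leave it unproven, and flag it as the main difficulty, proposing either a hunt for the right Sears specialisation or an induction on $\abs{\kappa-\mu}$. In fact it is not a refinement at all: it \emph{is} \eqref{Saal}. With $f=de/bc$ one has
\[
f-e=\frac{e}{b}\Bigl(\frac{d}{c}-b\Bigr),\qquad
e-cf=\frac{e}{b}\,(b-d),\qquad
f-cf=\frac{e}{b}\Bigl(\frac{d}{c}-d\Bigr),
\]
so all three bracket arguments in your identity are proportional with the common factor $e/b$. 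By the same homogeneity device \eqref{hom} that you already invoked to clear the powers of $a$, the factors $(e/b)^{\abs{\kappa-\la}+\abs{\la-\mu}}=(e/b)^{\abs{\kappa-\mu}}$ cancel between the two sides, and what remains is precisely \eqref{Saal} with $(a,b,c)\mapsto(d/c,b,d)$ (summed over the middle partition). This is exactly how the paper finishes: it quotes \eqref{Saal} with $(a,b,c,d)\mapsto(d/c,b,d,e/ab)$, the last entry being the rescaling of the bracket, to perform the inner sum and recover the left-hand side directly, with no need for the linear-independence remark (coefficientwise equality is sufficient in any case).

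So the proposal as written does not yet prove the theorem --- its ``heart of the matter'' is left open --- but the missing step requires no new identity, no Sears specialisation and no induction: one observation about the proportionality of the three brackets closes it, after which your argument coincides with the paper's.
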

Note that the substitution $(b,c,e,f)\mapsto
(d/c,d/b,f,e)$ interchanges the left- and right-hand sides.

\begin{proof}
We replace $\mu\mapsto\nu$ and then rename the summation 
index $\la$ on the right as $\mu$. 
By the Pieri formula of Theorem~\ref{Pieri} 
(with $(a,b,c)\mapsto (bc/d,e,a)$) the term 
\[
\R_{\mu}(X;f)
\prod_{x\in X} \frac{(fx,ex/a)_{\infty}}{(ex,fx/a)_{\infty}}
\]
on the right-hand side can be expanded as
\[
\sum_{\la}
\frac{(a)_{\la}}{(a)_{\mu}}\,
\Q_{\la/\mu}\biggl[\frac{f/a-e/a}{1-t}\biggr] \R_{\la}(X;e),
\]
resulting in
\begin{multline*}
\sum_{\la} 
\frac{(a,b)_{\la}}{(d)_{\la}} \,
\Q_{\la/\nu}\biggl[\frac{f/a-cf/a}{1-t}\biggr] \R_{\la}(X;e) \\
=\frac{(b)_{\nu}}{(d/c)_{\nu}}
\sum_{\la,\mu} 
\frac{(a)_{\la}(d/c)_{\mu}}{(d)_{\mu}}\,
\Q_{\la/\mu}\biggl[\frac{f/a-e/a}{1-t}\biggr] 
\Q_{\mu/\nu}\biggl[\frac{e/a-cf/a}{1-t}\biggr] 
\R_{\la}(X;e).
\end{multline*}
By \eqref{Saal} with $(a,b,c,d)\mapsto(d/c,b,d,e/ab)$ the sum over $\mu$
can be carried out, completing the proof.
\end{proof}

For $\mu=0$ Theorem~\ref{GthmKTW} simplifies to a multiple analogue of the
$q$-Kummer--Thomae--Whipple formula \cite[Equation (III.10)]{GR04}
(corresponding to the formula below when $X=\{1\}$).
\begin{corollary}[$\sn$ $q$-Kummer--Thomae--Whipple formula]\label{CorKTW}
For $f=de/bc$,
\begin{equation}\label{KTW}
{_3\Phi_2}\biggl[\genfrac{}{}{0pt}{}{a,b,c}
{d,e};\frac{f}{a};X\biggr]
=\biggl(\:\prod_{x\in X} \frac{(fx,ex/a)_{\infty}}{(ex,fx/a)_{\infty}}
\biggr)\:
{_3\Phi_2}\biggl[\genfrac{}{}{0pt}{}{a,d/b,d/c}
{d,f};\frac{e}{a};X\biggr].
\end{equation}
\end{corollary}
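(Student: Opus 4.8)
The plan is to derive \eqref{KTW} as the $\mu=0$ instance of Theorem~\ref{GthmKTW}, after rewriting the two sides in the $_3\Phi_2$ notation. So first I would set $\mu=0$ in that theorem. Then $\Q_{\la/\mu}=\Q_{\la/0}=\Q_{\la}$, the prefactor $(b)_{\mu}/(d/c)_{\mu}$ collapses to $1$, and the identity becomes
\[
\sum_{\la} \frac{(a,b)_{\la}}{(d)_{\la}}\,
\Q_{\la}\biggl[\frac{f/a-cf/a}{1-t}\biggr] \R_{\la}(X;e)
=\biggl(\:\prod_{x\in X} \frac{(fx,ex/a)_{\infty}}{(ex,fx/a)_{\infty}}\biggr)
\sum_{\la} \frac{(a,d/c)_{\la}}{(d)_{\la}}\,
\Q_{\la}\biggl[\frac{e/a-cf/a}{1-t}\biggr] \R_{\la}(X;f).
\]

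Next I would evaluate the two $\Q_{\la}[\,\cdot\,]$ factors explicitly. Rescaling the arguments of the evaluation homomorphism multiplies the power sums by a common power, $\phi_{\lambda u,\lambda v}(p_r)=\lambda^r\phi_{u,v}(p_r)$, which by the homogeneity \eqref{hom} yields $\Q_{\la}\bigl[(\lambda u-\lambda v)/(1-t)\bigr]=\lambda^{\abs{\la}}\Q_{\la}\bigl[(u-v)/(1-t)\bigr]$. Applying this with $\lambda=f/a$, $u=1$, $v=c$ on the left, and then using \eqref{Qat}, gives $\Q_{\la}\bigl[(f/a-cf/a)/(1-t)\bigr]=(f/a)^{\abs{\la}}(c)_{\la}$. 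On the right I take $\lambda=e/a$, $u=1$, $v=cf/e$; the balancing relation $f=de/bc$ forces $cf/e=d/b$, so \eqref{Qat} gives $\Q_{\la}\bigl[(e/a-cf/a)/(1-t)\bigr]=(e/a)^{\abs{\la}}(d/b)_{\la}$.

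Substituting these evaluations and absorbing $(c)_{\la}$, resp.\ $(d/b)_{\la}$, into the numerator, the displayed identity turns into
\[
\sum_{\la} \frac{(a,b,c)_{\la}}{(d)_{\la}}\Bigl(\frac{f}{a}\Bigr)^{\abs{\la}}\R_{\la}(X;e)
=\biggl(\:\prod_{x\in X} \frac{(fx,ex/a)_{\infty}}{(ex,fx/a)_{\infty}}\biggr)
\sum_{\la} \frac{(a,d/b,d/c)_{\la}}{(d)_{\la}}\Bigl(\frac{e}{a}\Bigr)^{\abs{\la}}\R_{\la}(X;f).
\]
Finally I would observe that for a $_3\Phi_2$ (so $r=3$, $s=2$) the exponent $s-r+1$ is $0$, whence the prefactor $\bigl((-1)^{\abs{\la}}q^{n(\la')}t^{-n(\la)}\bigr)^{s-r+1}$ in the definition of $_r\Phi_s$ equals $1$; consequently the two sums above are precisely the $_3\Phi_2$ series appearing on the two sides of \eqref{KTW} (with $d$ playing the role of the lower parameter carried by the $q$-shifted factorials and $e$, resp.\ $f$, the one carried by $\R_{\la}$). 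This is exactly \eqref{KTW}.

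The proof has no real obstacle: all the substantive work is already done in Theorem~\ref{GthmKTW}, hence ultimately in the Pieri formula of Corollary~\ref{Pieri} and in Rains' $q$-Pfaff--Saalsch\"utz summation \eqref{Saal}. The only thing needing a moment's care is the elementary bookkeeping --- pulling the scalars $f/a$ and $e/a$ out of the evaluation brackets via \eqref{hom}, and checking that $f=de/bc$ gives $cf/e=d/b$ so that the right-hand side parameters come out as $d/b,d/c$.
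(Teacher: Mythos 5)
Your proposal is correct and follows exactly the paper's route: the corollary is obtained by setting $\mu=0$ in Theorem~\ref{GthmKTW}, evaluating the plethystic brackets via the scaling $\Q_{\la}\bigl[(\lambda u-\lambda v)/(1-t)\bigr]=\lambda^{\abs{\la}}\Q_{\la}\bigl[(u-v)/(1-t)\bigr]$ together with \eqref{Qat} (noting $cf/e=d/b$ from $f=de/bc$), and then matching the result against the definition of ${}_3\Phi_2$, where the sign factor indeed disappears since $s-r+1=0$. The bookkeeping you carry out is precisely what the paper leaves implicit.
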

For $d=c$ this reduces to the $q$-Gauss sum \eqref{newGauss3}.

If we let $c,d\to 0$ in Theorem~\ref{GthmKTW} such that $d/c=bf/e$ and then 
replace $(e,f)\mapsto(c,az)$, we find
\begin{multline*}
\sum_{\la} z^{\abs{\la-\mu}} 
\frac{(a,b)_{\la}}{(a,b)_{\mu}} \qbin{\la}{\mu} \R_{\la}(X;c) \\
=\biggl(\:
\prod_{x\in X} \frac{(cx/a,azx)_{\infty}}{(cx,zx)_{\infty}} 
\biggr)
\sum_{\la} 
\Bigl(\frac{c}{a}\Bigr)^{\abs{\la-\mu}} 
\frac{(a,abz/c)_{\la}}{(a,abz/c)_{\mu}} \qbin{\la}{\mu} \R_{\la}(X;az).
\end{multline*}
For $\mu=0$ this is a multiple analogue of Heine's $_2\phi_1$
transformation \cite[Equation (III.2)]{GR04}.
\begin{corollary}[$\sn$ Heine transformation]
We have
\[
{_2\Phi_1}\biggl[\genfrac{}{}{0pt}{}{a,b}{c};z;X\biggr]
=\biggl(\:\prod_{x\in X} \frac{(cx/a,azx)_{\infty}}{(cx,zx)_{\infty}}
\biggr)\:
{_2\Phi_1}\biggl[\genfrac{}{}{0pt}{}{a,abz/c}{az};\frac{c}{a};X\biggr].
\]
\end{corollary}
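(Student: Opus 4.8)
The plan is to obtain this identity as the $\mu=0$ case of the transformation displayed immediately before the statement, which in turn is a confluent limit of Theorem~\ref{GthmKTW}; once that theorem is in hand there is essentially nothing left but bookkeeping of $q$-shifted factorials.

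First I would take the identity of Theorem~\ref{GthmKTW} and specialise along the curve $d=bcf/e$, which keeps the balancing condition $f=de/bc$ in force while leaving $f$ a free parameter, and then let $c\to 0$. On this curve $d/c=bf/e$ is already independent of $c$, while $d\to 0$, so in the limit $(d)_{\la}\to 1$, the factors $1/(d)_{\la}$ disappear, and the two plethystic factors collapse to monomials: by the homogeneity of $\Q_{\la/\mu}$ (a consequence of \eqref{hom}) together with $\qbin{\la}{\mu}=\Q_{\la/\mu}[1/(1-t)]$, one has $\Q_{\la/\mu}\bigl[(f/a-cf/a)/(1-t)\bigr]\to(f/a)^{\abs{\la-\mu}}\qbin{\la}{\mu}$ and likewise $\Q_{\la/\mu}\bigl[(e/a-cf/a)/(1-t)\bigr]\to(e/a)^{\abs{\la-\mu}}\qbin{\la}{\mu}$. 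Performing the substitution $(e,f)\mapsto(c,az)$ and dividing through by $(a,b)_{\mu}$, using $(b)_{\mu}/(a,b)_{\mu}=1/(a)_{\mu}$ and $azx/a=zx$, then reproduces exactly the displayed identity preceding the corollary.

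Finally I would put $\mu=0$: here $(a,b)_{0}=1$ and $\qbin{\la}{0}=\Q_{\la}[1/(1-t)]=1$ by \eqref{Qat}, so the identity becomes $\sum_{\la}(a,b)_{\la}z^{\abs{\la}}\R_{\la}(X;c)=\bigl(\prod_{x\in X}(cx/a,azx)_{\infty}/(cx,zx)_{\infty}\bigr)\sum_{\la}(a,abz/c)_{\la}(c/a)^{\abs{\la}}\R_{\la}(X;az)$. Recognising the two sides in the ${_r\Phi_s}$ notation completes the proof: for $r=2$, $s=1$ the exponent $s-r+1$ of the sign-power $\bigl((-1)^{\abs{\la}}q^{n(\la')}t^{-n(\la)}\bigr)$ is $0$ and the list of lower parameters $b_1,\dots,b_{s-1}$ is empty, so the left-hand side is ${_2\Phi_1}\bigl[\genfrac{}{}{0pt}{}{a,b}{c};z;X\bigr]$ and the sum on the right is ${_2\Phi_1}\bigl[\genfrac{}{}{0pt}{}{a,abz/c}{az};c/a;X\bigr]$, which is the assertion. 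The only point needing care --- hence the main (and essentially the sole) obstacle --- is the confluent limit: one must check that along $d=bcf/e$ no surviving $q$-shifted factorial such as $(bf/e)_{\mu}$ degenerates and that the termwise passage to $c=0$ is legitimate; this is handled by working at generic values of $a,b,e,f$, using that $\R_{\la}(X;b)=0$ once $l(\la)>\abs X$, and then extending by analytic continuation.
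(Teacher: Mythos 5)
Your proposal is correct and follows essentially the same route as the paper: the paper likewise obtains the corollary by letting $c,d\to 0$ in Theorem~\ref{GthmKTW} with $d/c=bf/e$ fixed (the plethystic factors degenerating to $(f/a)^{\abs{\la-\mu}}\qbin{\la}{\mu}$ and $(e/a)^{\abs{\la-\mu}}\qbin{\la}{\mu}$ by homogeneity), substituting $(e,f)\mapsto(c,az)$, and then setting $\mu=0$. Your bookkeeping of the $q$-shifted factorials, the simplification $(b)_{\mu}/(a,b)_{\mu}=1/(a)_{\mu}$, and the identification with the ${_2\Phi_1}$ notation all match the paper's (largely unwritten) computation.
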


\subsection*{Acknowledgements}
We thank Eric Rains for helpful comments on a preliminary draft of this
paper.

\bibliographystyle{amsplain}

\begin{thebibliography}{99}

\bibitem{Andrews76} 
G. E. Andrews, 
\textit{The Theory of Partitions},
Encyclopedia of Mathematics and its Applications, Vol.~2,
(Addison-Wesley, Reading, Massachusetts, 1976).

\bibitem{GR04}
G. Gasper and M. Rahman,
\textit{Basic Hypergeometric Series},
Encyclopedia of Mathematics and its Applications, Vol.~35,
second edition,
(Cambridge University Press, Cambridge, 2004).

\bibitem{Kaneko93}
J. Kaneko, 
\textit{Selberg integrals and hypergeometric functions associated 
with Jack polynomials}, 
SIAM J. Math. Anal. \textbf{24} (1993), 1086--1110.

\bibitem{Kaneko96}
J. Kaneko,
\textit{$q$-Selberg integrals and Macdonald polynomials},
Ann. Sci. \'Ecole Norm. Sup. \textbf{29} (1996), 583--637.

\bibitem{Knop97}
F. Knop,
\textit{Symmetric and non-symmetric quantum Capelli polynomials},
Comment. Math. Helv. \textbf{72} (1997), 84--100.

\bibitem{Lascoux03}
A. Lascoux,
\textit{Symmetric Functions and Combinatorial Operators on Polynomials},
CBMS Regional Conference Series in Mathematics 99, 
(AMS, Providance, Rhode Island, 2003).

\bibitem{LRW08}
A. Lascoux, E. M. Rains and S. O. Warnaar,
\textit{Nonsymmetric interpolation Macdonald polynomials and $\gn$ basic
hypergeometric series}, Transform. Groups, to appear; arXiv:0807.1351. 

\bibitem{Lassalle90}
M. Lassalle,
\textit{Une formule du bin\^ome g\'en\'eralis\'ee pour les polyn\^omes de Jack},
Rendus Acad. Sci. Paris, S\'er. I \textbf{310} (1990), 253--256.

\bibitem{Lassalle98}
M. Lassalle,
\textit{Coefficients binomiaux g\'en\'eralis\'es et polyn\^omes de Macdonald},
J. Funct. Anal. \textbf{158} (1998), 289--324.

\bibitem{Lassalle99}
M. Lassalle,
\textit{Quelques valeurs prises par les polyn\^omes de Macdonald
d\'ecal\'es},
Ann. Inst. Fourier \textbf{49} (1999), 543--561.

\bibitem{Macdonald88}
I. G. Macdonald,
\textit{A new class of symmetric functions},
S\'em. Lothar. Combin. B20a (1988), 41pp.

\bibitem{Macdonald95}
I. G. Macdonald,
\textit{Symmetric functions and Hall polynomials},
second edition,
(Oxford University Press, New York, 1995).

\bibitem{Macdonald}
I. G. Macdonald,
\textit{Hypergeometric series II}, unpublished manuscript.

\bibitem{Okounkov97}
A. Okounkov,
\textit{Binomial formula for Macdonald polynomials and applications},
Math. Res. Lett. \textbf{4} (1997), 533--553.

\bibitem{Okounkov98}
A. Okounkov,
(Shifted) Macdonald polynomials: $q$-integral representation and 
combinatorial formula,
Compositio Math. \textbf{112} (1998), 147--182.

\bibitem{Okounkov98b}
A. Okounkov,
\textit{$BC$-type interpolation Macdonald polynomials and binomial 
formula for Koornwinder polynomials},
Transform. Groups \textbf{3} (1998), 181--207.

\bibitem{Okounkov03}
A. Okounkov,
\textit{Combinatorial formula for Macdonald polynomials and generic 
Macdonald polynomials},
Transform. Groups \textbf{8} (2003), 293--305.

\bibitem{OO97}
A. Okounkov and G. Olshanski,
\textit{Shifted Jack polynomials, binomial formula, and applications},
Math. Res. Lett. \textbf{4} (1997), 69--78.

\bibitem{Rains03}
E. M. Rains,
\textit{Transformations of elliptic hypergeometric integrals},
Ann. of Math., to appear; arXiv:math.QA/0309252.

\bibitem{Rains05}
E. M. Rains,
\textit{$BC_n$-symmetric polynomials},
Transform. Groups \textbf{10} (2005), 63--132.

\bibitem{Rains06}
E. M. Rains,
\textit{$BC_n$-symmetric abelian functions},
Duke Math. J. \textbf{135} (2006), 99--180.

\bibitem{Rains08}
E. M. Rains,
\textit{Elliptic Littlewood identities}, arXiv:0806.0871.

\bibitem{Sahi96}
S. Sahi,
\textit{Interpolation, integrality, and a generalization of 
Macdonald's polynomials},
Internat. Math. Res. Notices (1996), 457--471. 

\end{thebibliography}

\end{document}